\DeclareMathAlphabet{\mathpzc}{OT1}{pzc}{m}{it}
\theoremstyle{plain}
\newcommand{\refnewtheoremn}[4]{
\newaliascnt{#1}{#2}
\newtheorem{#1}[#1]{#3}
\aliascntresetthe{#1}
\expandafter\providecommand\csname #1autorefname\endcsname{#4}}
\newcommand{\refnewtheorem}[3]{\refnewtheoremn{#1}{#2}{#3}{#3}}
\def\makeCal#1{
\expandafter\newcommand\csname c#1\endcsname{\mathcal{#1}}}
\def\makeBB#1{
\expandafter\newcommand\csname b#1\endcsname{\mathbb{#1}}}
\def\makeFrak#1{
\expandafter\newcommand\csname f#1\endcsname{\mathfrak{#1}}}
\edef\y{\@Alph\count@}
\newtheorem{thm}{Theorem}[section]
\newtheorem{theorem}{Theorem}[section]
\theoremstyle{definition}
\newcommand{\fg}{\mathfrak{g}}
\newcommand{\fh}{\mathfrak{h}}
\newcommand{\Coh}{\operatorname{Coh}}
\newcommand{\rr}{r}
\newcommand{\ch}{\operatorname{ch}}
\newcommand {\id}{\operatorname{id}}
\newcommand{\Stab}{\operatorname{Stab}}
\renewcommand{\Im}{\operatorname{Im}}
\renewcommand{\Re}{\operatorname{Re}}
\newcommand {\Hom}{\operatorname{Hom}}
\newcommand {\Aut}{\operatorname{Aut}}
\newcommand {\End}{\operatorname{End}}
\newcommand{\DT}{\operatorname{DT}}
\newcommand{\Ham}{\operatorname{Ham}}
\newcommand{\GL}{\operatorname{GL}}
\newcommand{\Li}{\operatorname{Li}}
\newcommand{\PGL}{\operatorname{PGL}}
\newcommand{\GV}{\operatorname{GV}}
\newcommand{\lra}{\longrightarrow}
\newcommand {\<}{\langle}
\renewcommand {\>}{\rangle}
\newcommand{\isom}{\cong}
\newcommand{\half}{\tfrac{1}{2}}
\newcommand{\tensor}{\otimes}
\renewcommand{\O}{\mathscr{O}}
\newcommand{\Vect}{\operatorname{vect}}
\newcommand{\gl}{\mathfrak{gl}}
\newcommand{\dual}{\vee}
\renewcommand{\b}{\,|\,}
\newcommand{\od}{{\rm od}}
\newcommand{\god}{{\fg}^{\rm od}}
\newcommand{\LC}{ \scriptscriptstyle{ \rm LC}}
\newcommand{\reg}{\operatorname{reg}}
\newcommand{\stokes}{\bS}
\renewcommand{\hbar}{\epsilon}
\begin{document}

\title{Geometry from Donaldson-Thomas invariants}
\author{Tom Bridgeland}

\date{}

\begin{abstract}
We introduce geometric structures on the space of stability conditions of a three-dimensional Calabi-Yau category which encode the Donaldson-Thomas invariants of the category. We explain in detail a  close analogy between these structures, which we call Joyce structures, and Frobenius structures. In the second half of the paper we give explicit calculations of Joyce structures in three  interesting classes of examples.\end{abstract}

\maketitle


\section{Introduction}

The aim of this paper is to use the Donaldson-Thomas (DT) invariants \cite{JS,KS} of a CY$_3$ triangulated category  to define a geometric structure on its space of stability conditions \cite{Stab,Ludmil}. We call the resulting structure a Joyce structure, since the most important ingredients already appear in the paper \cite{HolGen}. There  is a close analogy between the notion of a Joyce structure and that of a Frobenius structure \cite{D1,D2} which will be  our  main theme. Both structures involve isomonodromic pencils of flat connections on the  tangent bundle of a complex manifold $M$, although in the case of Joyce structures these connections are non-linear.

A key difference between Frobenius and Joyce structures  lies in their relationship to the corresponding  enumerative invariants.  The genus 0 Gromov-Witten invariants of a smooth, complex, projective variety $X$ can be encoded in a Frobenius structure on a small, possibly formal, ball in the vector space $H^*(X,\bC)$. The  pencil of flat connections is given directly by the derivatives of the prepotential, which in turn is  the generating function for the invariants. In this paper we  shall similarly encode the DT invariants of a CY$_3$ category in a Joyce structure on its space of stability conditions. But in this context, it is not  the flat connections themselves that are described  by the  invariants, but rather their generalised monodromy or Stokes data. 

\subsection{BPS structures}
Nothing in this paper requires any knowledge of the inner workings of DT theory.  The output of DT theory applied to a stability condition on a CY$_3$ triangulated category was axiomatised in \cite{RHDT} to give the  notion of a BPS structure. It consists of the following simple data:
\begin{itemize}
\item[(a)] a free abelian group $\Gamma\isom \bZ^{\oplus n}$ equipped with a skew-symmetric integral form $\<-,-\>$;

\item[(b)] a homomorphism of abelian groups
$Z\colon \Gamma\to \bC$ called the central charge;
\item[(c)] a map of sets
$\Omega\colon \Gamma\to \bQ$ encoding the DT invariants;
\end{itemize}
subject only to  the symmetry condition $\Omega(-\gamma)=\Omega(\gamma)$, and a weak finiteness condition known as the support property.
The most basic general property of DT invariants is the Kontsevich-Soibelman wall-crossing formula \cite{KS}, which describes how the invariants  change under variations of stability condition. This can  be   axiomatised  in the abstract notion of a variation of BPS structures over a manifold $M$. These concepts, which are special cases of structures introduced by Kontsevich and Soibelman, will be reviewed in Section \ref{next}.

To give a  variation of BPS structures should be viewed as being analogous to giving the Stokes factors of a semi-simple Frobenius manifold as a function of the canonical co-ordinates. Reconstructing the Frobenius structure from such data requires inverting the  Riemann-Hilbert (RH) correspondence for a class of meromorphic connections, which in turn,  involves solving  a RH boundary value problem \cite[Lecture 3]{D1}, \cite[Lecture 4]{D2} for maps from the complex plane to the group $\GL_n(\bC)$.  
The problem of reconstructing a Joyce structure from a variation of BPS structures is  entirely analogous, but  with the finite-dimensional group  $\GL_n(\bC)$ replaced by the infinite-dimensional group of Poisson automorphisms of an algebraic torus.  The corresponding RH boundary value problems were studied in detail in \cite{RHDT}.  

Although we have no general existence or uniqueness results for solutions to the relevant RH  problems, in the second part of the paper, Sections \ref{uncoupled}--\ref{sten}, we  make  some non-trivial calculations in several cases arising naturally in  DT theory. We hope that these results, which relate in an interesting way to  known structures in mirror symmetry, will provide adequate motivation for the further study of the analogy considered here. 

\subsection{Joyce structures}
\label{cont}

Let us now describe in a little more detail the essential features of a Joyce structure. The precise definition can be found   in Section \ref{boring} below.  The starting point is a complex manifold  $M$  equipped with a flat,  torsion-free  connection $\nabla$ on the  holomorphic tangent bundle $\cT_M$, and a covariantly constant skew-symmetric pairing
\[\eta\colon \cT^*_M\times \cT^*_M\to \bC.\]
The example we have in mind is the space of stability conditions $\Stab(\cD)$ on a CY$_3$ triangulated category $\cD$, in which case the central charges  of the objects of $\cD$ give flat co-ordinates for the connection $\nabla$,  and the skew-symmetric form $\eta(-,-)$ is induced by the Euler form on the Grothedieck group $K_0(\cD)$.

Let us choose a  local system of flat co-ordinates $(z_1, \cdots, z_n)$ on  $M$.  We  obtain induced linear co-ordinates $(\theta_1,\cdots,\theta_n)$ on each tangent space $\cT_{M,p}$ by writing a tangent vector $X$ at a point $p\in M$ in the form \[X=\sum_{i=1}^n \theta_i\cdot  \frac{\partial}{\partial z_i}.\] We can also introduce the constant skew-symmetric matrix $\eta_{ij}=\eta(dz_i,dz_j)$. 

A Joyce structure on  $M$ involves a pencil of  flat, meromorphic Ehresmann connections $\cA^{(\hbar)}$ on  the bundle $\cT_M$, whose horizontal subspaces are spanned by  vector fields of the form
\begin{equation}
\label{sd2}\frac{\partial}{\partial z_i} +\frac{1}{\hbar} \cdot \frac{\partial}{\partial \theta_i}+\sum_{j} \eta_{jk} \cdot \frac{\partial^2 J}{\partial \theta_i 
\partial \theta_j} \cdot  \frac{\partial}{\partial \theta_k},\end{equation}
for some meromorphic function  $J\colon \cT_M\to \bC$  on the total space of the tangent bundle, which we call the Joyce function.\footnote{In fact it is the second derivatives of the Joyce function appearing in \eqref{sd2} that are genuine  meromorphic functions on $\cT_M$. In general, the  function $J$ itself could acquire logarithmic singularities, and  is therefore  only well-defined and single-valued on a dense open subset. See the discussion in Section \ref{heur} below.}
Flatness of the connections \eqref{sd2} is implied by the partial differential equations
\[
\frac{\partial^2 J}{\partial \theta_i \partial z_j}-\frac{\partial^2 J}{\partial \theta_j \partial z_i }=\sum_{p,q} \eta_{pq} \cdot \frac{\partial^2 J}{\partial \theta_i \partial \theta_p} \cdot \frac{\partial^2 J}{\partial \theta_j \partial \theta_q}.\]

The pencil of flat connections \eqref{sd2} should be viewed as analogous to the deformed flat connection of a Frobenius manifold.  Just as in the Frobenius case, this pencil extends to a meromorphic connection on the pull-back of the bundle $\cT_M$ to the product $M\times \bP^1$, which we can then alternatively view as an isomonodromic family of meromorphic  connections on the trivial bundles over $\bP^1$ with fibres $\cT_{M,x}$. In the Joyce case these connections take the form
\begin{equation}
\label{point2} \frac{\partial}{\partial \hbar} -\frac{1}{\hbar^2} \cdot \sum_i z_i \cdot \frac{\partial }{\partial \theta_i}-\frac{1}{\hbar}\cdot  \sum_{i,j,k} z_i\cdot  \eta_{jk} \cdot\frac{\partial^2 J}{\partial \theta_i  \partial \theta_j} \cdot \frac{\partial}{\partial \theta_k}.\end{equation}
It is the Stokes data of the Ehresmann   connections \eqref{point2} at the irregular singularity $\hbar=0$ that is encoded by the associated BPS structures. The reconstruction problem discussed above amounts to finding the Joyce function $J=J(z_i,\theta_j)$ as a function  of this Stokes data.

\subsection{Associated linear data}

 Consider again a Joyce structure on a complex manifold $M$, and let us assume that the Joyce function $J$ is holomorphic in a neighbourhood of the zero-section of $\cT_M$, corresponding to the locus where all co-ordinates $\theta_i=0$.
Although the  connections $\cA^{(\hbar)}$ of the pencil \eqref{sd2} are inherently non-linear, we can nonetheless use such a Joyce structure  to induce linear structures on the tangent bundle $\cT_M$. 

We begin by defining a flat, torsion-free  connection on  $\cT_M$ by the formula
\[
\nabla^J_{\frac{\partial}{\partial z_i}}\Big(\frac{\partial}{\partial z_j}\Big)=  -\sum_{p,q}\eta_{pq}\cdot\frac{\partial^3 J}{\partial \theta_i \, \partial \theta_j \, \partial \theta_p}\Big|_{\theta=0} \cdot \frac{\partial}{\partial z_q}.\]
We call it the linear Joyce connection, since an equivalent definition appears in \cite[Section 6.2]{HolGen}.
We can also define
a symmetric bilinear form $g\colon \cT_M\times\cT_M\to \O_M$
\[g\Big(\frac{\partial}{\partial z_i},\frac{\partial}{\partial z_j}\Big)=  \sum_m z_m \cdot \frac{\partial^3 J}{\partial \theta_i \partial \theta_j \partial \theta_m }\Big|_{\theta=0},\]
which we call   the Joyce form.
 This form  is covariantly constant for the connection $\nabla^J$, but need not be non-degenerate in general. When  it is,  we can also  define a commutative operation $\diamond\colon \cT_M\times\cT_M\to \cT_M$,  which we call the diamond product, by the formula
\[g\bigg(\frac{\partial}{\partial z_i} \diamond \frac{\partial}{\partial z_j}, \frac{\partial}{\partial z_k}\bigg)=\frac{\partial^3 J}{\partial \theta_i \partial \theta_j \partial \theta_k }\Big|_{\theta=0}= g\bigg(\frac{\partial}{\partial z_i},\frac{\partial}{\partial z_j}\diamond\frac{\partial}{\partial z_k}\bigg).\]
This operation is not always associative, but as we shall see, for at least some of the examples arising naturally in DT theory  it is.
 
 There is one other interesting object associated to a Joyce structure, which can be defined only when both sides of the equation \eqref{point2}  identically vanish. This happens for example when   the  form $\eta=0$. In this situation    there exist locally-defined functions $\cF\colon M\to \bC$ satisfying
 \[\frac{\partial^3 \cF}{\partial z_i\, \partial z_j \, \partial z_k }=\frac{\partial^3 J}{\partial \theta_i\, \partial \theta_j \, \partial \theta_k}\Big|_{\theta=0}.\]
 We call such a function a prepotential; it is unique up to the addition of quadratic functions in the co-ordinates $z_i$.
 
One of the main aims of this paper is to compute  the above linear data in some interesting  examples. In the next three subsections  we will give a brief summary of our results in this direction.

\subsection{Finite uncoupled BPS structures}

A BPS structure $(\Gamma,Z,\Omega)$ is called  finite if there are only finitely many classes $\gamma\in \Gamma$ for which the BPS invariant $\Omega(\gamma)$ is nonzero, and integral if all  invariants $\Omega(\gamma)\in \bZ$ are integers. It is called uncoupled if 
\[\gamma_1,\gamma_2\in \Gamma, \quad \Omega(\gamma_i)\neq 0 \implies \<\gamma_1,\gamma_2\>=0.\]
 This last condition implies that the BPS automorphisms associated to different rays commute, and it follows that in a variation of such BPS structures   the invariants $\Omega(\gamma)$ are constant. 

The RH problems defined by BPS structures satisfying the above three conditions were studied by Barbieri \cite{Barb}, who gave an explicit solution involving products of gamma functions. We review her results in detail in Section \ref{uncoupled} below. Suppose now given a variation of such BPS structures over a complex manifold $M$, which is  also miniversal, in the sense that the locally-defined period map
\[\varpi\colon M\to \Hom_{\bZ}(\Gamma,\bC),\]
sending a point $p\in M$ to the corresponding central charge $Z\colon \Gamma\to \bC$, is a local isomorphism. The derivative of the  map  $\varpi$ then allows us to  identify tangent vectors to $M$  with group homomorphisms $\theta\colon \Gamma\to \bC$, and so the Joyce function  can  be viewed as depending  on the central charge $Z\colon 
\Gamma\to \bC$  together with a tangent vector $\theta\colon \Gamma\to \bC$.

In Section \ref{uncoupled} we  show that the  Joyce function is given explicitly by the simple formula\[J(Z,\theta)=\frac{1}{24\pi i }\cdot\sum_{\gamma\in \Gamma\setminus\{0\}}  \Omega(\gamma)\cdot \frac{\theta(\gamma)^3}{Z(\gamma)}.\]
The associated linear data is then easily computed. The Joyce form is
\[g(X_1,X_2)=\frac{1}{4\pi i}\cdot \sum_{\gamma\in \Gamma\setminus\{0\}} \Omega(\gamma)\cdot X_1(\gamma)X_2(\gamma).\]
When this is non-degenerate,  the diamond product  is defined by
\[g(X_1\diamond X_2,X_3)=\frac{1}{4\pi i}\cdot  \sum_{\gamma\in \Gamma\setminus\{0\}}  \Omega(\gamma)\cdot \frac{X_1(\gamma)X_2(\gamma)X_3(\gamma)}{Z(\gamma)}= g(X_1,X_2\diamond X_3).\]
Finally, there is a  locally-defined prepotential $\cF\colon M\to \bC$   given by the formula
\[\cF(Z)=\frac{1}{8\pi i }\cdot \sum_{\gamma\in \Gamma\setminus\{0\}} \Omega(\gamma) \cdot Z(\gamma)^2 \log Z(\gamma).\]

The condition that the diamond product $\diamond$ is associative is non-trivial, and in fact implies that the prepotential $\cF$  satisfies the WDVV equations  with respect to the metric $g$. In the special case when all invariants $\Omega(\gamma)\in \{0,1\}$  this is equivalent to the condition that the set of classes $\gamma\in \Gamma$ for which $\Omega(\gamma)=1$ form a $\vee$-system in the sense of Veselov \cite{veselov}. 

\subsection{Calabi-Yau threefolds without compact divisors}
In Section \ref{snine} we consider the BPS structures which result from DT theory applied to compactly-supported coherent sheaves on a Calabi-Yau threefold $X$ containing no compact divisors.  Since all such sheaves $E\in \Coh(X)$ are then supported on curves, the Chern character $(\ch_2(E),\ch_3(E))$ is an element of the lattice
\[\Gamma=H_2(X,\bZ)\oplus \bZ.\]
The Euler form $\<-,-\>$ on this lattice vanishes, since curves do not generically intersect on a threefold, so 
the resulting BPS structures are uncoupled, although not  finite. 

We assume the well known conjecture that the BPS curve-counting invariants coincide with the genus 0 Gopakumar-Vafa invariants:
\[\Omega(\beta,n)=\GV(0,\beta).\]
We also assume that there are only finitely many homology classes $\beta\in H_2(X,\bZ)$ for which these invariants are nonzero. We can then solve the corresponding RH problem using the methods of \cite{Con}.

The resulting Joyce function  depends on central charge co-ordinates $(v,w)\in H^2(X,\bC)\oplus\bC$, and fibre co-ordinates $(\theta,\phi)\in H^2(X,\bC)\oplus\bC$. It is given by
\[J(v,w,\theta,\phi)=\frac{1}{6w^4} \cdot \sum_{\beta\in H_2(X,\bZ)}\GV(0,\beta) \cdot \left(v(\beta) \phi-w\theta(\beta)\right )^3\cdot \big(1-e^{-2\pi i v(\beta)/w}\big)^{-1}.\]
In this case the Joyce form vanishes, which is perhaps to be expected, since under our assumptions the intersection form on $H^*(X,\bC)$ also vanishes.  The diamond product is therefore undefined. On the other hand, computing the prepotential gives
\[F(v,w)=-\frac{w^2}{(2\pi i)^3}\cdot \sum_{\beta\in H_2(X,\bZ)} GV(0,\beta)\cdot \Li_3\big(e^{2\pi i v(\beta)/w}\big).\]
Up to a trivial factor this coincides with the genus 0  Gromov-Witten generating function.

These results should be compared with the main result of \cite{Con} which, in the case when $X$ is the resolved conifold,  used the solution of the RH problem at $(\theta,\phi)=(0,0)$ to define an analytic function $\tau(v,w,\hbar)$ whose asymptotic expansion  at $\hbar=0$ reproduced the  genus $\geq 2$  terms in the Gromov-Witten generating function of $X$. Taken together, we view these results as strong evidence that the RH problems we are studying are relevant for a global understanding of topological string theory.

\subsection{The A$_2$ quiver and Painlev{\'e} I}
 In Section \ref{sten} we focus on the variation of BPS structures defined by the DT theory of the A$_2$ quiver. This is the simplest example of a variation which is not uncoupled, and hence where the wall-crossing formula is non-trivial.  The base of the structure is the complex manifold
\[M=\big\{(a,b)\in \bC^2: 4a^3+27b^2\neq 0\big\}.\]
It follows from the results of \cite{BQS} that $M$ can be viewed as the space of stability conditions on the bounded derived category of the three-dimensional Ginzburg algebra associated to the A$_2$ quiver, quotiented by the subgroup of the group of auto-equivalences generated by spherical twists in the two vertex simples.

Each point of $M$  determines a meromorphic  quadratic differential on $\bP^1$
\begin{equation*}
Q_0(x) \, dx^{\tensor 2} = (x^3+ax+b) \, dx^{\tensor 2},\end{equation*}
with a single pole of order seven at $x=\infty$. The central charge co-ordinates are multi-valued on the quotient space $M$, and are given by the period integrals
\[z_i=\int_{\gamma_i} \sqrt{Q_0(x)} \, dx,\]
where $(\gamma_1,\gamma_2)$ is a basis for the first homology group of the affine elliptic curve $y^2=Q_0(x)$. 

Following the lead  of Gaiotto, Moore and Neitzke  \cite{GMN2}, it is proved in the paper \cite{A2}  that the asssociated  RH problems can be solved using the monodromy map for a family of differential equations of the form
\begin{equation}\label{de5} y''(x)=Q(x,\hbar) \cdot y(x), \qquad Q(x,\hbar)=\hbar^{-2}\cdot {Q_0(x)}+\hbar^{-1}\cdot Q_1(x) +  Q_2(x).\end{equation}
The particular equations appearing   are known as deformed cubic oscillators, and have been studied for many years in relation to the first Painlev{\'e} equation, which describes their isomonodromic deformations. It is explained in \cite{A2} that an extended version of the isomonodromy connection gives the pencil of non-linear connections  of the associated  Joyce structure.

Writing down the isomonodromy connection explicitly give rise to a formula for the Joyce function  $J\colon \cT_M\to \bC$ which we write out explicitly in  Theorem \ref{one}. The 
Joyce form can be calculated from this  and turns out to be 
\[g=\frac{2\pi i }{5}\cdot (da\tensor db+db\tensor da).\]
In particular, $g$ is non-degenerate, and we recover the natural co-ordinates $(a,b)$, which are not at all obvious from the point of view of the space of stability conditions, as the  flat co-ordinates for the linear Joyce connection. Furthermore,  the diamond product   is associative, and coincides, up to a constant factor,  with the product of the dual almost Frobenius structure \cite[Section 5.2]{D3} 
associated to the  A$_2$ root system. 

This A$_2$ example fits into a much more general class of examples,  which in physical terms correspond to theories of class $S$ with gauge group $SU(2)$. A result of the author  with Smith \cite{BS} shows that in these cases the space of stability conditions  modulo autoequivalences coincides with the moduli space of pairs $(S,\phi)$, consisting of a Riemann surface $S$ equipped with a meromorphic quadratic differential $\phi$ having poles of fixed orders and simple zeroes. The BPS invariants are obtained by counting finite-length trajectories of  the differential $\phi$. We expect that  in this more general setting the relevant Joyce structures can also be described using the isomonodromy connection for a  family of opers  of the form \eqref{de5}.

 \subsection{Relation to previous work}

As well as the theory of generalised DT invariants constructed by Joyce, Kontsevich, Soibelman and others, this paper builds in an essential way on several other works. One of the key underlying ideas is that the wall-crossing formula for DT invariants should be interpreted as  the  isomonodromy condition for a family of irregular connections. This  point-of-view was explained in a slightly different context by the author and Toledano Laredo  \cite{VTL}, and also played a prominent role in the work of Gaiotto, Moore and Neitzke \cite{GMN1}. The main results  of \cite{VTL} were themselves inspired by Joyce's remarkable paper  \cite{HolGen}, which attempts to use weighted sums of DT invariants to define holomorphic generating functions on the space of stability conditions.  
 
The material we present here is   also closely related to the work of Barbieri, Filipini, Garcia-Fernandez and Stoppa \cite{BSt,FGS}. These authors use DT invariants to construct formal Frobenius-type structures on the space of stability conditions, involving connections taking values in an infinite-dimensional Lie algebra of functions on an algebraic torus.  Their approach is close in spirit to that of Joyce \cite{HolGen}, and the resulting structures can be viewed as  formal versions of the same Joyce structures we consider here. 

In the approach of Joyce, and of Stoppa and his collaborators, and also in the approach we advocate  here, the non-trivial (and for the most part unsolved) problem is to invert a certain irregular RH map. For Joyce and Stoppa {\it et al} this is done  using a formal power-series expansion; the difficult issue is then to prove convergence of this series. In this paper, we instead take an analytic approach using RH problems, and the challenge is then to prove the existence of solutions to these problems. 

The RH problems which allow us to pass from BPS structures to Joyce structures  are closely related to  those considered by Gaiotto, Moore and Neitzke \cite{GMN1,GMN2} in relation to quantum field theories with $d=4$, $\cN=2$ supersymmetry. These authors already remarked on the close analogy between their constructions    and the $tt^*$ equations studied by Cecotti and Vafa \cite{CV1,CV2} in the context of theories with $d=2$, $\cN=2$  supersymmetry. In the conformal limit  \cite{G}, this  analogy becomes the analogy between Joyce structures and Frobenius structures which is the main topic of this paper. 

The RH problems considered here are also closely related to the Themodynamic Bethe Ansatz (TBA) equations, although the author is unfortunately not qualified to comment in detail on this. The basic point is that our RH problems can be reformulated as  integral equations, which after suitable reprocessing becomes the TBA equations. We refer the reader to   \cite{G}, \cite[Appendix E]{GMN1} and \cite{IMS} for more details on this connection.

\subsection{Plan of the paper}
The first two sections contain an exposition of the relevant parts of the theory of Frobenius manifolds. Everything here can be found in some form in Dubrovin's original lecture notes \cite{D1,D2}. In Section 2 we give the basic definitions, and explain how a Frobenius structure on a complex manifold can be encoded in an isomonodromic family of irregular connections. In Section 3 we then explain how this family of connections can in  turn be encoded by its Stokes data.

In the next two sections we give an analogous treatment of the theory of Joyce structures. The definition of a Joyce structure is introduced in Section 4, guided closely by the analogy with Frobenius structures. We also introduce the associated isomonodromic family of meromorphic Ehresmann connections. In Section 5 we recall from \cite{RHDT} the basic definitions  concerning BPS  structures, which encode the Stokes data of these connections.

In Section 6 we discuss the general problem of reconstructing pencils of connections from their Stokes data. This amounts to inverting an irregular RH correspondence. We consider several approaches to this problem, both in the finite-dimensional Frobenius case, and the infinite-dimensional Joyce case.

Section 7 is concerned with the linear structures on the tangent bundle induced by a Joyce structure. We define the linear Joyce connection, the Joyce form,  the diamond product and the prepotential discussed above. We also define a notion of compatibility for Frobenius and Joyce structures living on the same underlying manifold.

The bulk of the second half of the paper, comprising Sections 8--10, is devoted to explicit computations in   particular examples.  Section 8 deals with finite uncoupled BPS structures, following work of Barbieri \cite{Barb}. Section 9 is concerned with the DT theory of coherent sheaves on non-compact Calabi-Yau threefolds, and relies on the computations of \cite{Con}. Section 10 focuses on the case of the A$_2$ quiver, and is essentially a digest of the paper  \cite{A2}. 

\subsection*{Acknowledgements}
The author gratefully acknowledges useful conversations with  Dylan Allegretti, Anna Barbieri, Pierrick Bousseau, Kohei Iwaki, Dima Korotkin, Davide Masoero, Sven Meinhardt, Andy Neitzke, Nicolas Orantin, Ivan Smith, J{\"o}rg Teschner  and Valerio Toledano Laredo.


\section{Frobenius structures and pencils of connections}
\label{stwo}

In this section and the next we recall some of the basic theory of Frobenius manifolds, following Dubrovin \cite{D1,D2}. Other useful references are \cite{Hertling,Hitchin,Man,Sabbah}. This material will only be used in what follows as a motivating analogy for the definition of Joyce structures in Section \ref{joyce}, and we tailor our treatment to this purpose.  The main point for us is that a Frobenius structure can be encoded in  an isomonodromic family of meromorphic connections on $\bP^1$ of a particular kind.

\subsection{Definition of a Frobenius structure}
We begin with the definition of a Frobenius structure. The  holomorphic tangent bundle of a complex manifold $M$ will be denoted by $\cT_M$.

\begin{defn}\label{frobenius}A \emph{Frobenius structure} on a complex manifold $M$ consists of the following data
\begin{itemize}
\item[(i)]  the \emph{metric}: a  holomorphic map
$g\colon \cT_M \tensor \cT_M \to \O_M,$
inducing a symmetric and non-degenerate  bilinear form on each tangent space $\cT_{M,p}$;\smallskip

\item[(ii)]  the \emph{product}: a holomorphic map
$*\colon \cT_M \tensor \cT_M \to \cT_M,$
inducing on each tangent space $\cT_{M,p}$  the structure of a commutative and  associative  algebra;\smallskip

\item[(iii)] the   \emph{Euler vector field}: a holomorphic section
$E\colon \O_M\to \cT_M$;
\end{itemize}

subject to the following four axioms:\begin{itemize}
\item[(F1)] the Levi-Civita connection $\nabla^{\LC}$ on $\cT_M$ defined by the metric $g$ is flat;%

\item[(F2)] there exists a covariantly constant vector field $e\colon \O_M\to \cT_M$ which is  a  unit for the product;

\item[(F3)] in local flat co-ordinates $(t_1,\cdots, t_n)$ for the metric $g$,  there exists a locally-defined function $F(t_1,\cdots,t_n)$ satisfying
\begin{equation}\label{compcom} g\bigg (\frac{\partial}{\partial t_i}*\frac{\partial}{\partial t_j},\frac{\partial}{\partial t_k}\bigg )= \frac{\partial^3 F}{\partial t_i \partial t_j \partial t_k}= g\bigg (\frac{\partial}{\partial t_i},\frac{\partial}{\partial t_j}*\frac{\partial}{\partial t_k}\bigg );\end{equation}

\item[(F4)] the vector field $E$ is linear: $\nabla^{\LC}(\nabla^{\LC}(E))=0$, and satisfies\[\mathcal{L}ie_E(g) = (2-d)\cdot g, \qquad \mathcal{L}ie_E (*) =  *,\]
where   $\mathcal{L}ie_E$ denotes the Lie derivative, and $d\in \bC$ is some fixed complex number. \end{itemize}
\end{defn}

By a Frobenius manifold we of course mean a manifold equipped with a Frobenius structure. We commit the usual abuse of labelling a Frobenius manifold by its underlying manifold leaving the metric, product and Euler vector field implicit. The vector field $e$ of (F2) is clearly unique: it is called the \emph{identity vector field}. The number $d\in \bC$ appearing in (F4) is called the \emph{conformal dimension} or \emph{charge} of the Frobenius manifold. The complex dimension of the  underlying manifold $M$ will usually be denoted $n$.  
The function $F$ in condition (F3) is called a  \emph{prepotential}: on any given patch it is well-defined  up to the addition  of quadratic polynomials in the flat co-ordinates $t_i$.
The statement that the product $*$ is associative implies that $F$  satisfies the  WDVV equations.

\subsection{Tame points and canonical co-ordinates}
Let $M$ be a Frobenius manifold. The operation  of multiplication by the Euler vector field defines an endomorphism $U(X)=E*X$ of the tangent bundle. A point $p\in M$ is said to be \emph{tame} if $U$ has one-dimensional eigenspaces on the fibre $\cT_{M,p}$.  The subset of tame points is an open submanifold of $M$, and  we say that $M$ itself is tame if this open subset is the whole of $M$.   

\begin{lemma}
\label{lem2}
If a point $p\in M$ is tame then the  eigenvalues $(u_1,\cdots,u_n)$ of the operator $U$   form a 
system of co-ordinates in a neighbourhood of $p\in M$.  These satisfy
\[\frac{\partial}{\partial u_i}*\frac{\partial}{\partial u_j}=\delta_{ij} \, \frac{\partial}{\partial u_i}.\]
\end{lemma}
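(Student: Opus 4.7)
The plan is to construct $n$ local holomorphic idempotents for the product $*$ near $p$, show they commute as vector fields so that they integrate to a local coordinate system, and then use axiom (F4) to identify this system with the eigenvalues $u_i$. Since $U=E*$ has $n$ distinct eigenvalues on $\cT_{M,p}$, on a neighbourhood $V$ of $p$ the spectrum remains distinct, yielding local holomorphic eigenvalue functions $u_1,\dots,u_n$ and one-dimensional holomorphic eigen subbundles $E_i\subset\cT_M|_V$. If $\pi_i$ is any local generator of $E_i$, then, using associativity of $*$, the chain $E*(\pi_i*\pi_j)=u_i(\pi_i*\pi_j)=u_j(\pi_i*\pi_j)$ forces $\pi_i*\pi_j=0$ for $i\neq j$, while $\pi_i*\pi_i\in E_i$ must equal $c_i\pi_i$ for some scalar $c_i$. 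Expanding the identity vector field from (F2) as $e=\sum_i a_i\pi_i$ and imposing $e*\pi_i=\pi_i$ gives $a_ic_i=1$, so $c_i\neq 0$; rescaling then produces holomorphic vector fields $e_i:=\pi_i/c_i$ on $V$ obeying $e_i*e_j=\delta_{ij}e_i$, $e=\sum_ie_i$, and hence $E=U(e)=\sum_i u_ie_i$.

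The main obstacle will be the second step: showing that these idempotent vector fields commute, $[e_i,e_j]=0$ on $V$. The purely algebraic construction above says nothing about how the frame $\{e_i\}$ varies from point to point, so one must feed in the flatness of $\nabla^{\LC}$ (axiom (F1)) together with the potentiality condition (F3). Following Dubrovin, I would exploit the deformed flat connection $\nabla^{(z)}_XY:=\nabla^{\LC}_XY+z\cdot X*Y$, which is torsion-free and flat for every $z\in\bC$ as a consequence of (F1), (F3), and associativity of $*$. Expanding the vanishing of its curvature in powers of $z$ and evaluating the resulting identities on the frame $\{e_i\}$, combined with the total symmetry of $\nabla^{\LC}(*)$ coming from (F3), should yield $[e_i,e_j]=0$ after some manipulation. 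This integrability statement is the technical heart of the proof.

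Once commutation is in hand, the Frobenius theorem for commuting vector fields produces local holomorphic coordinates $v_1,\dots,v_n$ at $p$ with $e_i=\partial/\partial v_i$. It then remains to identify the $v_i$ with the eigenvalues $u_i$, and for this I would unpack axiom (F4). The relation $\mathcal{L}ie_E(*)=*$ reads
\[\mathcal{L}ie_E(X*Y)=(\mathcal{L}ie_EX)*Y+X*(\mathcal{L}ie_EY)+X*Y,\]
and specialising to $X=Y=e_i$ using $e_i*e_i=e_i$ gives, after expanding $\mathcal{L}ie_E(e_i)$ in the idempotent frame $\{e_k\}$, the clean identity $[E,e_i]=-e_i$. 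Writing $E=\sum_ju_j\,\partial/\partial v_j$ in the new coordinates and computing $[E,\partial/\partial v_i]=-\sum_j(\partial u_j/\partial v_i)\,\partial/\partial v_j$ then forces $\partial u_j/\partial v_i=\delta_{ij}$, so $u_j=v_j+\text{const}$. Hence $(u_1,\dots,u_n)$ are local coordinates with $\partial/\partial u_i=e_i$, and the product rule $\partial/\partial u_i*\partial/\partial u_j=\delta_{ij}\,\partial/\partial u_i$ follows at once from $e_i*e_j=\delta_{ij}e_i$.
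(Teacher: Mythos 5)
Your proof is correct and is essentially the argument the paper delegates to \cite[Theorem 3.1]{D2}: the algebraic construction of the idempotent frame $\{e_i\}$ from semisimplicity of $(\cT_{M,p},*)$, the integrability of that frame via the flat pencil $\nabla^{\LC}_X Y+z\,X*Y$, and the identification of the resulting coordinates with the eigenvalues via $[E,e_i]=-e_i$ coming from (F4) are exactly the standard treatment. The one step you leave as a sketch does go through as you indicate: writing $\nabla^{\LC}_{e_i}e_j=\sum_k\Gamma_{ij}^k e_k$ and $C_XY=X*Y$, the $z^1$-coefficient of the vanishing curvature is the symmetry $(\nabla^{\LC}_XC)_Y=(\nabla^{\LC}_YC)_X$, and evaluating it on $X=e_i$, $Y=e_j$, $Z=e_k$ and comparing coefficients in the frame yields $\Gamma_{ij}^k=\Gamma_{ji}^k$ for every $k$, whence $[e_i,e_j]=0$ by torsion-freeness.
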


\begin{proof}
See \cite[Theorem 3.1]{D2}.
\end{proof}

Let $p\in M$ be a tame point. It follows from Lemma \ref{lem2} that the algebra $(\cT_{M,p}, *)$ is semisimple, and that there are
expressions
\begin{equation}
\label{cold}e=\sum_i \frac{\partial}{\partial u_i},\qquad E=\sum_i u_i \frac{\partial}{\partial u_i}.\end{equation}
The co-ordinates $(u_1,\cdots,u_n)$ of Lemma \ref{lem2} are 
called  \emph{canonical co-ordinates}. The compatibility \eqref{compcom} between metric and product ensures that the  tangent vectors  $\partial/\partial u_i$ define an  orthogonal basis of $\cT_{M,p}$. We denote by
$(f_1,\cdots, f_n)$ a corresponding orthonormal basis
\begin{equation}
\label{bloob}f_i = \frac{1}{d_i} \frac{\partial}{\partial u_i}, \qquad d_i^2 =g\Big(\frac{\partial}{\partial u_i}, \frac{\partial}{\partial u_i}\Big).\end{equation}
This basis is uniquely well-defined up to permutations and multiplication by $\pm1$.

\subsection{Lie algebra formalities}
\label{form2}

Let $M$ be a Frobenius manifold with a tame point $p\in M$. In this section we make some formal   remarks concerning  the Lie algebra $\gl(\cT_{M,p})$, which will be important for the analogy with Joyce structures explained in Section \ref{joyce} below. 

Sending a tangent vector $X\in \cT_{M,p}$ to the operation of multiplication by $X$ defines a linear map $m\colon \cT_{M,p} \to \gl (\cT_{M,p})$.
Since the algebra $\cT_{M,p}$ is unital, the map $m$ is injective, and since it   is commutative, and the elements $m(X)$ are semi-simple, the image  coincides with a  Cartan subalgebra $\fh\subset \gl (\cT_{M,p})$. Thus we have a diagram  \begin{equation}
\label{m}m\colon \cT_{M,p} \stackrel{\isom}{\lra}  \fh\subset \gl (\cT_{M,p}).\end{equation}
The Cartan subalgebra  $\fh\subset \gl (\cT_{M,p})$  determines a  canonical root decomposition
\begin{equation}\label{roots}\gl (\cT_{M,p})=\fh\oplus \bigoplus_{\alpha\in \Phi} \fg_\alpha.\end{equation}
Given a vector $X\in \cT_{M,p}$ we shall write $X(\alpha)=\langle m(X),\alpha\rangle\in \bC$
for the pairing of the vector $m(X)\in \fh$ with a root $\alpha\in \fh^*$. The condition that the point $p\in M$ is tame implies that   $U=m_E\in \fh$  lies in the complement $\fh^{\reg}$ of the root hyperplanes.

The metric on $\cT_{M,p}$ allows us to define an involution
$I(\Psi)=\Psi^{-*}$ of the group $\GL(\cT_{M,p})$
which sends an automorphism to its inverse adjoint. This induces an involution
\begin{equation}\label{ioio}
\iota\colon \gl(\cT_{M,p})\to \gl(\cT_{M,p}), \qquad \iota(\psi)=-\psi^{*}\end{equation}
at the level of Lie algebras, which maps an endomorphism to its negative adjoint. 
Note that the involution $\iota$ acts by $-1$ on the subalgebra $\fh\subset \fg$, and exchanges the root spaces $\fg_{\pm \alpha}$.   

In more down-to-earth terms, the basis $(f_1,\cdots,f_n)$ gives an identification
\begin{equation}
\label{hoho}\gl(\cT_{M,p})\isom \gl_n(\bC).\end{equation}
 Under this identification, the distinguished Cartan subalgebra $\fh\subset \fg$  corresponds to the diagonal matrices, and the root decomposition to the standard decomposition of $\gl_n(\bC)$ into multiples of the elementary matrices $E_{ij}$. The expression $X(\alpha)$ is just the difference between two eigenvalues of the operator of multiplication by $X$, and the subset $\fh^{\reg}\subset \fh$ is the set of diagonal matrices with distinct eigenvalues. Since the basis $(f_1,\cdots,f_n)$ is orthonormal, the involution $\iota$   corresponds under \eqref{hoho} to the usual negative transpose of matrices.

\subsection{First structure connection}

Let $M$ be a Frobenius manifold, and consider the projection $p\colon M\times \bP^1\to M$.
 Introduce the endomorphism of $\cT_M$
 \begin{equation}
 \label{defv}V(X)=\nabla^{\LC}_X(E)+\tfrac{1}{2}\cdot (d-2)\cdot   X.\end{equation}
The Frobenius structure on  $M$ can be encoded in a certain meromorphic connection $\cA$ on the bundle $p^*(\cT_M)$ known as the  \emph{first structure connection} or \emph{deformed flat connection}. It is   defined by the formulae
\begin{equation}\label{xmas}{\cA}_X(Y)=\nabla^{\LC}_X(Y)+\frac{1}{\hbar}\cdot X*Y,\qquad {\cA}_{\frac{\partial}{\partial \hbar}} (Y)=\frac{\partial Y}{\partial \hbar} -\bigg(\frac{U}{\hbar^2}+\frac{V}{\hbar}\bigg) Y,\end{equation}
where $\hbar$ is a co-ordinate on $\bP^1$, and we abuse notation by identifying a vector field on $M$ with its  lift to a section of $p^*(\cT_M)$ which is constant in the $\hbar$ direction.

\begin{remark}
\label{swim}
 It is easy to check that the operators $U$ and $V$ are self-adjoint and  skew-adjoint respectively with respect to the metric $g$.
Thus in terms of the involution $\iota$ of Section \ref{form2} we have
 $\iota(U)=-U$ and $ \iota(V)=V$.
  It follows that the connection on the frame bundle of  $p^*(\cT_M)$ induced by the first structure connection  is invariant under the composite operation of applying the involution $I$ and changing the sign of $\hbar$.
 \end{remark}

 In the literature one usually finds the formulae \eqref{xmas} written in terms of the co-ordinate $z=\hbar^{-1}$, but the above formulation will be more convenient for us.

\begin{lemma}
The first structure connection is flat.
\end{lemma}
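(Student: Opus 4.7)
The plan is to show the flatness of $\cA$ by computing its curvature and verifying that it vanishes order-by-order in the variable $\hbar$. The curvature has two types of components: the purely horizontal part $R(X,Y)$ for vector fields $X,Y$ on $M$, and the mixed part $R(X,\partial/\partial\hbar)$. Throughout, I treat sections of $p^*(\cT_M)$ as lifts of vector fields on $M$, so $\partial/\partial\hbar$ annihilates them directly.

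For the purely horizontal part, expanding $[\cA_X,\cA_Y]-\cA_{[X,Y]}$ yields contributions of orders $\hbar^0$, $\hbar^{-1}$, and $\hbar^{-2}$. The $\hbar^0$ term is exactly the curvature of $\nabla^{\LC}$ and vanishes by axiom (F1). The $\hbar^{-1}$ term works out to $(\nabla^{\LC}_X{*})(Y,Z)-(\nabla^{\LC}_Y{*})(X,Z)$, after using the torsion-freeness of $\nabla^{\LC}$ to cancel a $(\nabla_XY-\nabla_YX)*Z$ against $[X,Y]*Z$. This vanishes because axiom (F3) implies that the $(0,4)$-tensor $g((\nabla^{\LC}_X{*})(Y,Z),W)=\partial^4F/\partial t_i\partial t_j\partial t_k\partial t_l$ is totally symmetric. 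The $\hbar^{-2}$ term is $(X*Y)*Z-(Y*X)*Z$ (after using associativity to reorganise the product), and vanishes by the commutativity and associativity of $*$.

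For the mixed part, the identity $[X,\partial/\partial\hbar]=0$ reduces the problem to showing $[\cA_X,\cA_{\partial/\partial\hbar}]=0$. Expanding produces terms of orders $\hbar^{-3}$, $\hbar^{-2}$, and $\hbar^{-1}$. The $\hbar^{-3}$ term equals $X*(UY)-U(X*Y)=X*(E*Y)-E*(X*Y)$, which vanishes by commutativity and associativity. The $\hbar^{-2}$ term simplifies (after the $U\,\nabla_XY$ contributions cancel) to
\[
(\nabla^{\LC}_XU)(Y)+X*(VY)-V(X*Y)-X*Y=0,
\]
which I will show is equivalent to the Euler axiom $\mathcal{L}ie_E(*)=*$; the key step is to rewrite $(\nabla^{\LC}_XU)(Y)$ using $\nabla^{\LC}_XE=V(X)+\tfrac{2-d}{2}X$ together with $\nabla^{\LC}_X(E*Y)=(\nabla^{\LC}_XE)*Y+E*(\nabla^{\LC}_XY)+(\nabla^{\LC}_X{*})(E,Y)$, and then use torsion-freeness of $\nabla^{\LC}$ to convert $\nabla^{\LC}_EX-\nabla^{\LC}_XE$ into $[E,X]$. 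The $\hbar^{-1}$ term collapses to $(\nabla^{\LC}_XV)(Y)=0$, which is precisely the statement that $V$ is covariantly constant; this follows from $\nabla^{\LC}(\nabla^{\LC}E)=0$ in axiom (F4) together with the defining formula \eqref{defv} for $V$.

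The main obstacle will be the $\hbar^{-2}$ mixed coefficient, which requires translating the coordinate-free statement $\mathcal{L}ie_E(*)=*$ into an identity involving $\nabla^{\LC}U$ and the commutator $[V,m_X]$. Once this translation is made, the remaining checks are purely mechanical bookkeeping; every algebraic relation needed will have been extracted from one of the axioms (F1)--(F4), and the flatness assertion follows by combining them.
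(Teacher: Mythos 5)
Your computation is correct and is essentially the standard verification; the paper itself gives no argument, deferring to Dubrovin \cite[Proposition 2.1]{D2} and Manin \cite[Theorem I.1.5]{Man}, where the same order-by-order curvature check appears. The only step left implicit in your sketch is that closing the mixed $\hbar^{-2}$ coefficient against $\mathcal{L}ie_E(*)=*$ also uses the total symmetry of $\nabla^{\LC}*$, i.e.\ $(\nabla^{\LC}_X*)(E,Y)=(\nabla^{\LC}_E*)(X,Y)$, which you have already extracted from axiom (F3) in the horizontal $\hbar^{-1}$ computation, so the argument does close as claimed.
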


\begin{proof}
See for example \cite[Proposition 2.1]{D2} or  \cite[Theorem I.1.5]{Man}.
\end{proof}

There are two complementary points-of-view on the  first structure connection which it is useful to keep in mind. 
On the one hand, if we forget the derivatives in the $\hbar$ direction, the first structure connection gives a pencil of 
flat, torsion-free connections \[\cA^{\hbar}_X(Y) = \nabla^{\LC}_X(Y)+\frac{1}{\hbar}\cdot X*Y,\]
on the tangent bundle $\cT_M$, parameterized by $\hbar\in \bC^*$.
On the other hand, for each point 
  $p\in M$, the connection $\cA$ induces a meromorphic connection $\cA_p$ on the trivial 
 vector bundle $\cT_{M,p} \tensor_{\bC} \O_{\bP^1}$ over the complex projective line $\bP^1$, given by the formula
\begin{equation}
\label{conn}\cA_{p}=d-\bigg(\frac{U}{\hbar^2}+\frac{V}{\hbar} \bigg)d\hbar.\end{equation}
From this second point-of-view, the manifold $M$ is parameterising a family of meromorphic connections on a trivial bundle over $\bP^1$. 

\subsection{Canonical connection}
\label{trd}

Consider a tame Frobenius manifold $M$. There is a unique  connection $\nabla$ on the tangent bundle $\cT_M$ in which the local orthonormal bases $(f_1,\cdots,f_n)$ defined above  are covariantly constant. We refer to $\nabla$  as the \emph{canonical connection}. In general $\nabla$ is flat, but not usually torsion-free. 
There is a bundle of Lie algebras $\gl(\cT_M)$ over $M$ whose fibre over a point $p\in M$ is the Lie algebra $\gl(\cT_{M,p})$. The canonical connection $\nabla$ induces a flat connection in this bundle, with respect to which the root decomposition \eqref{roots} and the involution \eqref{ioio}  are  covariantly constant. 

To relate the canonical connecrtion $\nabla$  to the Levi-Civita connection $\nabla^{\LC}$ we introduce the map of vector bundles
\begin{equation}\label{theta}\Theta\colon \cT_M\to \gl(\cT_M), \qquad \Theta_X= \sum_{\alpha\in \Phi} \frac{X(\alpha)}{U(\alpha)} \cdot V_\alpha,\end{equation}
where we used the  canonical decomposition \eqref{roots} at each point $p\in M$ to write \[V=\sum_{\alpha\in \Phi}V_\alpha,\qquad V_\alpha\in \fg_\alpha.\]

\begin{lemma}
There is an identity
\[\nabla_X^{\LC}
 =\nabla_X + \Theta_X.\]
\end{lemma}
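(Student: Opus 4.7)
The identity is tensorial in $X$ and satisfies the Leibniz rule in its second argument, so my plan is to verify it pointwise at a tame $p\in M$, tested on the basis $(f_1,\ldots,f_n)$. Since $\nabla$ annihilates each $f_j$, the claim reduces to
\[\nabla^{\LC}_{\partial/\partial u_k}(f_j)=\Theta_{\partial/\partial u_k}(f_j)\qquad(j,k=1,\ldots,n),\]
expressed in canonical coordinates.

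To unpack the right-hand side I would work in the identification $\gl(\cT_{M,p})\cong\gl_n(\bC)$ of Section \ref{form2}. By Remark \ref{swim} the endomorphism $V$ is skew-adjoint, hence has no Cartan component, and I write $V=\sum_{i\neq j}V_{ij}E_{ij}$. Since $U(\alpha_{ij})=u_i-u_j$ and $X(\alpha_{ij})=X^i-X^j$ (differences of the canonical-coordinate components of $X$), applying $\Theta_X$ to $f_j$ picks up only the terms with second index $j$, giving
\[\Theta_{\partial/\partial u_k}(f_j)=\sum_{i\neq j}\frac{\delta_{ik}-\delta_{jk}}{u_i-u_j}\,V_{ij}\,f_i.\]

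Next I would compute the left-hand side directly. The metric is diagonal, $g=\sum_i d_i^2\,du_i\tensor du_i$, and a short Christoffel-symbol calculation produces
\[\nabla^{\LC}_{\partial/\partial u_k}(f_j)=\gamma_{kj}\,f_k\ (k\neq j),\qquad\nabla^{\LC}_{\partial/\partial u_j}(f_j)=-\sum_{i\neq j}\gamma_{ji}\,f_i,\]
where $\gamma_{ij}:=(\partial_j d_i)/d_j$ are the rotation coefficients. Matching with the previous expression reduces the whole lemma to the two identities
\[V_{ij}=(u_i-u_j)\,\gamma_{ij},\qquad\gamma_{ij}=\gamma_{ji}\qquad(i\neq j).\]
For the first, I would unfold the definition \eqref{defv} of $V$ on $f_j$ using $E=\sum_l u_l\,\partial/\partial u_l$; extracting the $f_i$-component ($i\neq j$) with the Christoffel symbols already in hand gives $V_{ij}=u_i\gamma_{ij}-u_j\gamma_{ji}$, which collapses to the desired form precisely when the symmetry holds. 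That symmetry is the Darboux--Egoroff relation, a classical consequence of axiom (F3): the compatibility \eqref{compcom} forces the diagonal metric in canonical coordinates to be of potential type (there exists $H$ with $d_i^2=\partial_i H$), and the symmetry of the $\gamma_{ij}$ follows.

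The work is almost entirely bookkeeping, so the main obstacle is just keeping the indices straight. The only conceptual point is that the Darboux--Egoroff symmetry---the content of axiom (F3) read in canonical coordinates---is precisely what allows the purely Lie-algebraic object $\Theta_X$, built from $V\in\gl(\cT_{M,p})$, $U\in\fh^{\reg}$ and the root decomposition \eqref{roots}, to coincide with the connection-theoretic difference $\nabla^{\LC}_X-\nabla_X$.
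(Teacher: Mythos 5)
Your argument is correct and is essentially the computation the paper delegates to \cite[Lemma 3.2]{D2}: the difference $\nabla^{\LC}_X-\nabla_X$ is tensorial, so you test it on the orthonormal frame $(f_1,\dots,f_n)$, identify the rotation coefficients $\gamma_{ij}=(\partial_j d_i)/d_j$, and reduce the lemma to $V_{ij}=(u_i-u_j)\gamma_{ij}$ plus the Darboux--Egoroff symmetry $\gamma_{ij}=\gamma_{ji}$, all of which checks out. One minor attribution point: the Egoroff potential $d_i^2=\partial_i H$ (whence the symmetry of the $\gamma_{ij}$) uses the flat unit of axiom (F2) as well as the compatibility (F3), since one needs $g(\partial_{u_i},\partial_{u_i})=g(\partial_{u_i}\ast\partial_{u_i},e)=g(\partial_{u_i},e)$ to be the gradient of a single function, and closedness of the one-form $g(-,e)$ requires $\nabla^{\LC}e=0$.
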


\begin{proof}
This is a consequence of \cite[Lemma 3.2]{D2}.
\end{proof}

The flatness of the Levi-Civita connection then implies the following differential equation for the elements $V_{\alpha}$ as a function of the point $U\in \fh^{\reg}$:
\begin{equation}
\label{diffeq}d V_\alpha=\sum_{\beta+\gamma=\alpha}  [V_\beta,V_\gamma]\cdot d\log U(\gamma).\end{equation}
It follows from general results on isomonodromic deformations due to Malgrange \cite{Mal} (see also the discussion in \cite[Section II.2.1]{Man}), that any local solution to this equation extends to a meromorphic function on the universal cover  of the configuration space
\[\fh^{\reg}=\{U\in \fh: U(\alpha)\neq 0\text{ for all }\alpha\in \Phi\}.\]
The problem of reconstructing of a Frobenius manifold from such a solution is addressed in \cite[Proposition 3.5]{D1}, \cite[Lemma 3.3]{D2} and \cite[Section 4.3]{Hitchin}.

The fact that the function $V=V(U)$ has poles when extended to the universal cover of $\fh^{\reg}$  is related  to the existence of non-trivial vector bundles on $\bP^1$. In fact the connection \eqref{conn} on the trivial bundle over $\bP^1$ always extends uniquely to a meromorphic connection on a vector bundle $\cV$ over $\bP^1\times \widetilde{\fh^{\reg}}$. In general however, the restrictions of the bundle $\cV$ to the slices $\bP^1\times \{U\}$ will  not all be trivial, and this leads to poles in the analytic continuation of $V$. 

\subsection{First structure connection in canonical co-ordinates}
\label{firststructure}
Consider again a tame Frobenius manifold $M$.   In terms of the canonical co-ordinates $(u_1,\cdots, u_n)$ the  Frobenius structure can be completely encoded in the pair of  maps
\[m\colon \cT_M\to \gl(\cT_M),\qquad \Theta\colon \cT_M\to \gl(\cT_M),\]
defined in \eqref{m} and \eqref{theta}. Their images lie in the $+1$ and $-1$ eigenspaces of $\iota$ respectively. 
  Abusing notation as before, the first structure connection  takes the form
\[{\cA}_X = \nabla_X + \Theta_X+ \frac{1}{\hbar} \cdot m_X,\qquad {\cA}_{ \frac{\partial}{\partial \hbar}} = \frac{\partial}{\partial \hbar}-\frac{1}{\hbar}\cdot {\Theta_E} -\frac{1}{\hbar^2} \cdot m_E.\]
The fact that the first structure connection is flat is equivalent to the relations
\begin{equation}\label{nfirst}[m_X,m_Y]=0, \qquad [m_X,\Theta_Y]=[m_Y,\Theta_X], \end{equation}
\begin{equation}
\label{nthird}
m_{[X,Y]}=\nabla_X(m_Y)-\nabla_Y(m_X), \qquad \nabla_X(m_E)=m_X,
\end{equation}
\begin{equation}
\label{nsecond} \Theta_{[X,Y]}-[\Theta_X,\Theta_Y]=\nabla_X(\Theta_Y)-\nabla_Y(\Theta_X), \qquad \nabla_X(\Theta_E) = [\Theta_E,\Theta_X]. \end{equation}

It is possible to check these equations directly. The first equation of \eqref{nfirst} holds because the image of the map $m$ is contained in the bundle of Cartan subalgebras. The second equation  is then immediate from the definition  of $\Theta$. It implies that we can write 
$\Theta_X=[m_X,\Phi]$
and thus encode  $\Theta$ in the section
\[\Phi\colon M \to \gl_n(\cT_M), \qquad \Phi=\sum_{\alpha\in \Phi} \frac{V_\alpha}{U(\alpha)}. \]

The first equation of \eqref{nthird} is easily checked  by taking $X=\partial/\partial u_i$ and $Y=\partial/\partial u_j$ to be idempotents, and applying both sides of the equation to a $\nabla$ constant section $f_k$. The second equation follows similarly by taking $X=\partial/\partial u_i$ and using the expression \eqref{cold} for the Euler vector field. Finally, the equations \eqref{nsecond}
are equivalent to the differential equation \eqref{diffeq}: we leave the details of the proof of this last step to the reader.


\section{Stokes data and isomonodromy}
\label{sthree}

In the previous section we explained how a Frobenius structure can be encoded in a family of meromorphic connections \eqref{conn} on a trivial bundle over the complex projective line $\bP^1$. In this section we explain how  this family can in turn be encoded by its Stokes data.  As with the contents of the previous section, this material will only be used in what follows as a motivating analogy, and we hope the reader will therefore excuse some of the idiosyncrasies of our exposition. We refer the reader to Dubrovin's original notes \cite[Section 3]{D1} and \cite[Section 4]{D2}. Other useful references on this material are \cite{BJL, Boalch, VTL, Sabbah}.

\subsection{Canonical solutions}
\label{canic}

Let us begin by considering an abstract connection on the trivial $\GL_n(\bC)$ bundle over $\bP^1$ of the form
\begin{equation}
\label{connie}\nabla=d-\bigg(\frac{U}{\hbar^2}+\frac{V}{\hbar} \bigg)d\hbar,\end{equation}
where $U\in \fh^{\reg}\subset \fh\subset \gl_n(\bC)$ is a diagonal matrix with one-dimensional eigenspaces, and 
\[V\in \god=\bigoplus_{\alpha\in \Phi} \fg_\alpha\subset \gl_n(\bC)\]
is an arbitrary off-diagonal matrix. This connection has an irregular singularity at $\hbar=0$ and a regular singularity at $\hbar=\infty$.
 
 The \emph{Stokes rays}  of the equation \eqref{connie} at the irregular singularity $\hbar=0$ are defined to be the rays $\bR_{>0}\cdot U(\alpha)\subset \bC^*$, where, as explained in Section \ref{form2}, the points $U(\alpha)\in \bC^*$ are the differences $u_i-u_j$ of the eigenvalues of the matrix $U$.
Given an arbitrary ray in the complex plane of the form $r=\bR_{>0}\cdot z\subset \bC^*$, we denote by\[
\bH_r=\{z=uv:u\in r, \, \Re(v)>0\}\subset \bC^*
\]
the half-plane centered on it.

\begin{thm}
\label{bjl}
\begin{itemize}
\item[(i)] For any non-Stokes ray $r\subset \bC^*$, there is a  holomorphic map
\begin{equation}
\label{canskeg}\Psi_r\colon \bH_r\to \GL_n(\bC),\end{equation}
which is a flat section of the connection \eqref{connie}  and satisfies
\begin{equation}
\label{limit}\Psi_r(\hbar)\cdot \exp(U/\hbar)\to \id\end{equation}
as $\hbar \to 0$ in the half-plane $\bH_r$.
\smallskip

\item[(ii)]
Suppose that $\Delta\subset \bC^*$ is a closed, convex sector, bounded in clockwise order by non-Stokes rays $r_1,r_2$. Then for any flat sections $\Psi_{r_i}$ as in (i), there is an element
\[S(\Delta)\in \exp\Big(\bigoplus_{U(\alpha)\in \Delta} \fg_\alpha\Big)\subset \GL_n(\bC),\]
such that for all $\hbar\in \bH_{r_1}\cap \bH_{r_2}$ one has $\Psi_{r_2}(\hbar)=\Psi_{r_1}(\hbar)\cdot S(\Delta)$. \qed
\end{itemize}
\end{thm}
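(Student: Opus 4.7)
The plan is to treat (i) and (ii) separately, using standard techniques from the theory of irregular singularities in the spirit of Balser--Jurkat--Lutz \cite{BJL}.

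For part (i), I would first gauge away the essential singularity by setting $\Psi(\hbar) = F(\hbar)\exp(-U/\hbar)$, so that the required asymptotic becomes $F(\hbar) \to \id$ as $\hbar \to 0$ in $\bH_r$. The flatness equation becomes $\hbar^{2} F'(\hbar) = [U, F(\hbar)] + \hbar\, V F(\hbar)$. Decomposing $F$ with respect to the root decomposition \eqref{roots}, each root-space component satisfies a scalar ODE with coefficient proportional to $U(\alpha)$, which I would recast as a fixed-point equation
$$F(\hbar) = \id + \int_{\gamma(\hbar)} \mathcal{K}(\hbar, \hbar')\, V F(\hbar') \, d\hbar',$$
where $\gamma(\hbar)$ is a contour running from $0$ to $\hbar$ within $\bH_r$ and the kernel $\mathcal{K}$ involves exponentials of the form $\exp\bigl(U(\alpha)(1/\hbar - 1/\hbar')\bigr)$. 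The assumption that $r$ is not a Stokes ray is precisely what ensures that for every root $\alpha$ one can choose $\gamma(\hbar)$ so that these exponential factors remain uniformly bounded. A contraction-mapping argument on a suitable Banach space of bounded holomorphic functions then produces a unique $F$, whence $\Psi_r = F \exp(-U/\hbar)$ is the desired flat section.

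For part (ii), both $\Psi_{r_i}$ are flat sections of the connection on the overlap $\bH_{r_1} \cap \bH_{r_2}$, so $S(\Delta) := \Psi_{r_1}^{-1} \Psi_{r_2}$ is a locally constant element of $\GL_n(\bC)$. Writing $S(\Delta) = \exp(s)$ with $s = s_0 + \sum_{\alpha \in \Phi} s_\alpha$ in the decomposition $\gl_n(\bC) = \fh \oplus \bigoplus_\alpha \fg_\alpha$, the relation $\Psi_{r_2} = \Psi_{r_1} S(\Delta)$ rewrites as
$$\Psi_{r_2}(\hbar) \exp(U/\hbar) = \bigl(\Psi_{r_1}(\hbar) \exp(U/\hbar)\bigr) \cdot \exp\!\Bigl(s_0 + \sum_\alpha e^{-U(\alpha)/\hbar}\, s_\alpha\Bigr),$$
using that $\operatorname{Ad}_{\exp(-U/\hbar)}$ acts on $\fg_\alpha$ by the scalar $e^{-U(\alpha)/\hbar}$. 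The canonical asymptotic from (i), applied on both $\bH_{r_1}$ and $\bH_{r_2}$, forces the right-hand exponent to tend to zero as $\hbar \to 0$ inside the overlap. This immediately gives $s_0 = 0$, and forces $s_\alpha = 0$ unless $\Re(-U(\alpha)/\hbar) \to -\infty$ for every $\hbar$ in $\bH_{r_1} \cap \bH_{r_2}$ approaching the origin; an elementary geometric computation then identifies this decay condition with the inclusion $\bH_{r_1} \cap \bH_{r_2} \subset \bH_{\bR_{>0}\cdot U(\alpha)}$, which in turn holds precisely when $U(\alpha)$ lies in the convex sector $\Delta$ bounded by $r_1$ and $r_2$.

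The main technical obstacle is the analytic content of part (i): establishing the existence of the canonical solution $\Psi_r$ with the prescribed asymptotics requires a careful choice of integration contours and verification that the iterative scheme converges uniformly on sub-sectors of $\bH_r$. Once this Borel--Laplace style resummation is in place, part (ii) is a direct combination of asymptotic bookkeeping and elementary plane geometry.
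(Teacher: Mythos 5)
Your part (ii) is precisely the paper's argument: the paper observes that any two flat sections differ by a constant $S(\Delta)$, that the asymptotics \eqref{limit} force $\exp(-U/\hbar)\cdot S(\Delta)\cdot \exp(U/\hbar)\to \id$ on the overlap, and that this "easily implies the given form of $S$" --- your root-space bookkeeping and the sector geometry are exactly the omitted elementary computation, and your identification of the decay condition with $U(\alpha)\in\Delta$ is correct (the non-Stokes hypothesis on $r_1,r_2$ is what guarantees the strict angle bound). One small wrinkle: writing $S(\Delta)=\exp(s)$ and arguing that the conjugated \emph{exponent} must tend to zero is not quite automatic, since $\exp(A(\hbar))\to\id$ does not by itself force $A(\hbar)\to 0$; the clean version works entry-wise on $S$ itself, noting $(\exp(-U/\hbar)\,S\,\exp(U/\hbar))_{ij}=S_{ij}e^{-(u_i-u_j)/\hbar}\to\delta_{ij}$, which gives $S_{ii}=1$ and kills $S_{ij}$ unless $e^{-U(\alpha)/\hbar}\to 0$ throughout the overlap. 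For part (i) the paper offers no proof at all --- it cites Balser--Jurkat--Lutz --- so your gauge transformation, integral equation and contraction-mapping outline is a sketch of the content of that reference rather than an alternative to anything in the paper; it is the standard route, but as you acknowledge, the genuinely hard analytic point (obtaining the asymptotic on the full open half-plane $\bH_r$, with integration contours chosen per root so that all kernels $\exp\bigl(U(\alpha)(1/\hbar-1/\hbar')\bigr)$ stay bounded simultaneously) is deferred rather than carried out, so on its own your text does not yet constitute a proof of (i).
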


Part (i) can be found in  \cite{BJL}, although it goes back earlier, and in some form to Birkhoff. Part (ii) is completely elementary: any two flat sections of \eqref{conn} differ by a constant element $S(\Delta)\in \GL_n(\bC)$ and the asymptotics \eqref{limit} ensure that
\[\exp(-U/\hbar)\cdot S(\Delta)\cdot \exp(U/\hbar)\to \id\]
as $\hbar\to 0$ in the overlap $\bH_{r_1}\cap \bH_{r_2}$. This easily implies the given form of $S$.

\begin{remark}
\label{skeg}
Part (ii) of Theorem \ref{bjl} immediately gives the following two statements:
\begin{itemize}
\item[(i)] For each non-Stokes ray $r\subset \bC^*$, the flat section $\Psi_r\colon \bH_r\to \GL_n(\bC)$ of part (i) is unique with the property \eqref{limit}. We call it the \emph{canonical solution} to the connection \eqref{connie}  on the half-plane $\bH_r$.\smallskip

\item[(ii)] Suppose that two non-Stokes rays $r_1$ and $r_2$ are the boundary rays of a convex sector which contains no Stokes rays. Then the corresponding canonical solutions  $\Psi_{r_1}$ and $\Psi_{r_2}$ are analytic continuations of one another, in the sense that they glue to give a single analytic function on $\bH_{r_1}\cup \bH_{r_2}$.
\end{itemize}
\end{remark}

\subsection{Stokes data}
\label{stomap}

Let us again consider the meromorphic connection \eqref{connie}. To each Stokes ray
\[\bR_{>0} \cdot U(\alpha)=\bR_{>0}\cdot 
(u_i-u_j)\subset \bC\]
is associated a \emph{Stokes factor}  \begin{equation}
\label{skegn}\stokes(\ell)=\exp\bigg(\sum_{\alpha\in \Phi:U(\alpha)\in \ell}D(\alpha)\cdot  E_\alpha\bigg)\in \exp\bigg(\bigoplus_{\alpha\in \Phi:U(\alpha)\in \ell} \fg_\alpha\bigg)\subset \GL_n(\bC).\end{equation}
obtained by taking $\stokes(\ell)=\stokes(\Delta)$ as defined in Theorem \ref{bjl}(ii), where  $\Delta\subset \bC^*$ is a closed, convex sector which contains the ray $\ell$  in its interior and  no other Stokes rays.
Note that for a generic matrix $U\in \fh^{\reg}$  the Stokes rays $\bR_{>0} \cdot U(\alpha)$ are all distinct, and each of the subgroups appearing on the right of \eqref{skegn} is one-dimensional. But for special choices of $U$ several Stokes rays may line up, and the corresponding subgroup is then larger.

The sets $\{\alpha\in\Phi:U(\alpha)\in\ell\}$  partition the set of roots $\Phi\subset \fh^*$ as $\ell$
ranges over the Stokes rays of $\nabla$, so we may assemble the elements
$D(\alpha)\cdot E_\alpha\in \fg_\alpha$ corresponding to the different Stokes rays and form the sum
\begin{equation}\label{eq:g od}
R=\sum_{\alpha\in \Phi}D(\alpha)\cdot E_\alpha\in\god.
\end{equation}

\begin{theorem}
\label{boal}
For each element 
 $U\in\fh^{\reg}$ the corresponding map
 \[\cS\colon \god\to\god, \qquad \cS(V)=R,\]
 sending the equation \eqref{connie} to its Stokes data, is a local isomorphism of complex manifolds.
 \end{theorem}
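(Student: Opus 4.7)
The plan is to apply the inverse function theorem: both source and target of $\cS$ are the finite-dimensional complex vector space $\god$, so it suffices to show that $\cS$ is holomorphic with everywhere-invertible derivative. Holomorphicity is immediate from Theorem~\ref{bjl}: the asymptotic condition \eqref{limit} uniquely selects the canonical solution $\Psi_r$, and this selection depends holomorphically on the coefficient $V$ of the linear system by standard ODE theory; the Stokes factors are then extracted algebraically as the ratios $\Psi_{r_2}^{-1}\Psi_{r_1}$.

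The heart of the argument is the invertibility of the derivative $d\cS_V$. I would first carry out the calculation at $V=0$, where the canonical solutions are $\Psi_r(\hbar)=\exp(-U/\hbar)$ and all Stokes factors are trivial, so $\cS(0)=0$. Setting $V=\epsilon W$ and $\Psi_r=(\id+\epsilon H_r+O(\epsilon^2))\exp(-U/\hbar)$, the matrix function $H_r$ satisfies the inhomogeneous linear equation
\[ H_r'(\hbar)=[U/\hbar^2,\,H_r(\hbar)]+W/\hbar,\qquad H_r(\hbar)\to 0\text{ as }\hbar\to 0\text{ in }\bH_r. \]
Crucially this decouples across the root decomposition \eqref{roots}: writing $W=\sum_\alpha w_\alpha E_\alpha$ and $H_r=\sum_\alpha h_{r,\alpha}E_\alpha$, the scalar component solves $h_{r,\alpha}'=U(\alpha)\hbar^{-2}h_{r,\alpha}+w_\alpha\hbar^{-1}$, whose inhomogeneous term depends only on $w_\alpha$. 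The asymptotic condition on $\bH_r$ picks out a distinguished particular solution, and its jump across the Stokes ray $\ell=\bR_{>0}U(\alpha)$ equals (after conjugation by $\exp(U/\hbar)$) $w_\alpha$ times a nonzero exponential-integral constant $c_\alpha$; the jumps in the $\alpha$-direction across Stokes rays $\ell_\beta$ for $\beta\neq\alpha$ vanish because the asymptotic selection for $h_{r,\alpha}$ is insensitive to crossing $\ell_\beta$. Hence $d\cS_0$ is diagonal in the basis $\{E_\alpha\}$ of $\god$ with nonzero entries $c_\alpha$, and is therefore invertible.

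The main obstacle is extending invertibility from $V=0$ to arbitrary $V\in\god$, since the canonical solution is no longer $\exp(-U/\hbar)$ and the root-space decoupling above becomes only approximate. I would handle this either by running the analogous variation-of-parameters calculation around the nontrivial canonical solution $\Psi_r$ at $V$, tracking the off-diagonal corrections to show they do not destroy the dominant diagonal contribution, or, more cleanly, by appealing to the general irregular Riemann--Hilbert correspondence of Balser--Jurkat--Lutz and Boalch cited above, which asserts precisely that the map from meromorphic connections of the form \eqref{connie} to their wild monodromy data is a local biholomorphism.
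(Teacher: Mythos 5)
Your linearization at $V=0$ is correct: the first--order perturbation $H_r$ does satisfy $H_r'=[U/\hbar^2,H_r]+W/\hbar$, this decouples over the root decomposition \eqref{roots}, and the resulting jump is a nonzero constant multiple of $w_\alpha$ on each root space (consistent with the $n=1$ term of the expansion \eqref{main2}, where $F_1$ is a nonzero constant). But this only proves the theorem in a neighbourhood of $V=0$, and the step you flag as "the main obstacle" is a genuine gap. Option (a) is not an argument: at a general $V\in\god$ there is no small parameter, so there is no sense in which the off-diagonal corrections are subordinate to a "dominant diagonal contribution". Option (b) misidentifies the relevant theorem: the Balser--Jurkat--Lutz correspondence is a bijection between Stokes data and \emph{gauge-equivalence classes} of germs of connections of the form \eqref{nabb} with $V(\hbar)$ holomorphic, which is a different map from $\cS$ --- precisely the distinction Section \ref{top} of the paper dwells on, since not every such germ is gauge-equivalent to a constant-coefficient connection \eqref{connie} and distinct constant $V$'s can be gauge-equivalent (this is why $\cS$ is only a \emph{local} isomorphism and not a bijection). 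Citing Boalch for the statement that the map from connections of the form \eqref{connie} to their Stokes data is a local biholomorphism is simply citing the theorem you are asked to prove.

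The paper closes the gap differently, and at every $V$ rather than only at $V=0$: holomorphy of $\cS$ comes from Sibuya's theorem that the canonical solutions of Theorem \ref{bjl} depend holomorphically on parameters (\cite[Proposition 3.2]{JMU}), and injectivity of $d\cS_V$ comes from the iso-Stokes deformation theory of Theorem \ref{emmy} (\cite[Theorem 3.3]{JMU}): a tangent vector in $\ker d\cS_V$ is an infinitesimal deformation of $V$ with $U$ held fixed that preserves all Stokes data, hence an iso-Stokes deformation, and the defining equation \eqref{fz} forces $dV_\alpha=\sum_{\beta+\gamma=\alpha}[V_\beta,V_\gamma]\,d\log U(\gamma)=0$ when $dU=0$. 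Since source and target have the same finite dimension, injectivity of the derivative gives invertibility. If you want to salvage your approach, the cleanest repair is exactly this isomonodromy argument; your explicit $V=0$ computation then becomes a useful consistency check rather than the engine of the proof.
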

 
 \begin{proof}
 The fact that the map $\cS$ is holomorphic follows from a   result of Sibuya which proves that the canonical solutions of Theorem \ref{bjl} vary holomorphically with parameters (see e.g. \cite[Proposition 3.2] {JMU}). It remains to show that the derivative of $\cS$ 
 is injective. This  follows from the description of iso-Stokes  deformations in Theorem \ref{emmy} below (see e.g. \cite[Theorem 3.3]{JMU}).
 \end{proof}
 
 The map $\cS$ of Theorem \ref{boal} is usually called the \emph{Stokes map}. Although it is a local isomorphism, it is not usually bijective. We shall return to this point in Section \ref{top} below.

\subsection{Iso-Stokes deformations}

Let us again return to the meromorphic connection \eqref{connie}, and consider now varying the matrices $U$ and $V$, always maintaining the  conditions $U\in \fh^{\reg}$ and $V\in\god$. Note that as the matrix $U\in \fh^{\reg}$ varies, the Stokes rays $\bR_{>0}\cdot U(\alpha)$ may collide and separate. The resulting family of connections is called \emph{iso-Stokes} if the following condition is satisfied: for any convex sector $\Delta\subset \bC^*$, the clockwise product of matrices
\begin{equation}
\label{footy}\stokes(\Delta)=\prod_{\ell\in \Delta} \stokes(\ell) \in \GL_n(\bC),\end{equation}
remains constant, unless and until the boundary rays of $\Delta$  become Stokes rays.

\begin{theorem}
\label{emmy}
Consider a family of  meromorphic connections of the form \eqref{conn} with $U\in \fh^{\reg}$ and $V$ varying continuously as a function of $U$. Then the family is iso-Stokes precisely if $V=V(U)$ satisfies the differential equation
\begin{equation}
\label{fz}d V_\alpha=\sum_{\beta+\gamma=\alpha}  [V_\beta,V_\gamma]\cdot d\log U(\gamma),\end{equation}
where we decompose $V=\sum_{\alpha\in \Phi} V_\alpha$ with $V_\alpha\in \fg_\alpha$. 
\end{theorem}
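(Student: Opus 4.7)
The plan is to realise the iso-Stokes condition as the flatness of a natural extension of the connection \eqref{connie} over the parameter space $\fh^{\reg}$, and then to read off the differential equation \eqref{fz} by separating that flatness equation into powers of $\hbar$. I would consider the trivial $\GL_n(\bC)$-bundle over $\fh^{\reg} \times \bP^1$ whose restriction to each slice $\{U\}\times \bP^1$ carries the connection \eqref{connie}, and look for a flat meromorphic connection on the total space whose horizontal flat sections in the $\fh^{\reg}$-direction are, on each slice, the canonical solutions $\Psi_r$ of Theorem \ref{bjl}. Writing such a connection as
\[
d -\Big(\frac{U}{\hbar^2}+\frac{V}{\hbar}\Big) d\hbar + \omega,
\]
the asymptotic normalisation \eqref{limit} of the canonical solutions forces $\omega$ to take the form
\[
\omega = -\frac{dU}{\hbar} + \psi,
\]
where $\psi$ is an $\hbar$-independent 1-form on $\fh^{\reg}$ valued in $\gl_n(\bC)$.

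With this ansatz, the mixed flatness equation between the $\hbar$-direction and an $\fh^{\reg}$-direction decomposes into coefficients of $\hbar^{-2}$ and $\hbar^{-1}$. The $\hbar^{-2}$ piece is algebraic and reads $[U,\psi] = [V,dU]$; on each root space $\fg_\alpha$ of the decomposition \eqref{roots} this becomes $U(\alpha)\cdot \psi_\alpha = -dU(\alpha)\cdot V_\alpha$, which forces $\psi\in\god$ and gives the explicit formula $\psi_\alpha = -d\log U(\alpha)\cdot V_\alpha$, recovering the map $\Theta$ of \eqref{theta} up to sign. The $\hbar^{-1}$ piece reads $dV = -[V,\psi]$, and substituting the formula for $\psi$ and expanding on root components yields exactly \eqref{fz}.

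It then remains to identify the flatness of this extended connection with the iso-Stokes condition. If the extension is flat, parallel transport in $\fh^{\reg}$ gives holomorphic maps between the spaces of flat sections over different slices; because $\omega$ has leading pole $-dU/\hbar$, these maps preserve the asymptotic normalisation \eqref{limit} on each half-plane $\bH_r$, so long as the deformation does not move a Stokes direction across $r$. Consequently canonical solutions are transported to canonical solutions, and the Stokes factors \eqref{skegn}, being the transition matrices between neighbouring canonical solutions, are locally constant in $U$. Conversely, if the Stokes data is locally constant then the canonical solutions on overlapping half-planes patch together across the $U$-direction to give flat sections of a uniquely determined meromorphic connection over $\fh^{\reg}\times \bP^1$, whose connection form must agree with the ansatz above.

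The most delicate step is the asymptotic analysis behind the claim that $\omega$ has precisely the shape $-dU/\hbar + \psi$ with $\psi$ holomorphic in $\hbar$ at $\hbar = 0$. This requires control of the $U$-derivatives of $\Psi_r$ uniformly as $\hbar \to 0$ in $\bH_r$, which follows from the full asymptotic expansions of $\Psi_r$ to all orders in $\hbar$ and their holomorphic dependence on parameters (the standard Sibuya-type results, compare \cite{JMU}). Once these are in place, the rest of the argument is the algebraic matching of coefficients sketched above.
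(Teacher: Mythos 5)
The paper does not actually prove Theorem \ref{emmy}: it defers entirely to \cite[Theorem 3.3]{JMU} for the general statement and to \cite[Equation 3.107]{D1} for the explicit form \eqref{fz}. What you have written is a reconstruction of the standard isomonodromy argument that underlies those references, and its overall architecture is sound: encode the iso-Stokes condition as flatness of an extension of \eqref{connie} over $\fh^{\reg}\times\bP^1$, pin down the pole structure of the extension from the normalisation \eqref{limit}, and read off \eqref{fz} from the $\hbar^{-1}$ coefficient of the mixed curvature equation (the $\hbar^{-2}$ coefficient reproducing $\Theta$ as in \eqref{theta}). Two points deserve more care than you give them. First, the algebraic equation $[U,\psi]=[V,dU]$ determines only the $\god$-component of $\psi$; since $[U,-]$ kills $\fh$, it does not by itself force $\psi\in\god$, and an $\fh$-component of $\psi$ would contribute nontrivially to $dV=-[V,\psi]$. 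What kills it is the asymptotic normalisation: writing $\Psi_r\sim(1+A_1\hbar+\cdots)\exp(-U/\hbar)$ one finds $\psi=[dU,A_1]\in\god$. Second, your claim that $\omega$ equals $-dU/\hbar+\psi$ with $\psi$ independent of $\hbar$ needs the behaviour at \emph{both} ends of $\bP^1$: the constancy of the Stokes factors makes $(d_U\Psi_r)\Psi_r^{-1}$ independent of the ray $r$ and hence single-valued on $\bC^*$, the expansion at $\hbar=0$ gives the simple pole $-dU/\hbar$, and the regular singularity at $\hbar=\infty$ gives moderate growth there; only the combination of all three, via Liouville, forces the stated Laurent-polynomial form. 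You discuss only the $\hbar\to 0$ end. Finally, note that the iso-Stokes condition as defined in the paper only asserts constancy of sector products $\stokes(\Delta)$, with individual Stokes rays allowed to collide and separate; your identification of "Stokes factors locally constant" with the iso-Stokes property is valid away from the real-codimension-one collision loci and should be stated as such. None of these is a fatal gap, but each is where the actual content of \cite{JMU} lives.
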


\begin{proof}
This appears in a much more general context in \cite[Theorem 3.3]{JMU}. The explicit form \eqref{fz} appears for example as \cite[Equation 3.107]{D1}.
\end{proof}

As discussed in Section \ref{trd},  any local solution to the  equation \eqref{fz} extends uniquely to a meromorphic function $V=V(U)$ on the universal cover of $\fh^{\reg}$. On the Stokes side, the analogue of this statement is that given a collection of numbers $D(\alpha)\in \bC$ as in \eqref{eq:g od},  we can uniquely extend the $D(\alpha)$ to (non-continuous) functions of $U$ on the universal cover of $\fh^{\reg}$, so that the resulting Stokes factors \eqref{skegn} satisfy the  iso-Stokes property.
 
 \begin{remark}
 \label{shorts}
When the iso-Stokes property of Theorem \ref{emmy} holds, it follows from \cite[Theorem 3.1]{JMU}  that   the canonical solutions $\Psi_r\colon \bH_r\to \GL_n(\bC)$ of Theorem \ref{bjl} for different $U\in \fh^{\reg}$ assemble to form holomorphic functions $\Psi_r(U,\hbar)$, which for fixed $\hbar\in \bC^*$  satisfy a partial differential equation describing their variation with $U$. The argument of \cite[page 70]{D2} shows that this equation is
\[\frac{\partial}{\partial u_i} \Psi_r(U,\hbar)+\bigg(\sum _{\alpha\in \Phi} \frac{\alpha_i}{U(\alpha)}\cdot V_{\alpha} +\frac{1}{\hbar} \cdot E_{ii}\bigg)\Psi_r(U,\hbar)=0,\]
where $E_{ii}$ denotes the elementary matrix at position $(i,i)$.  Comparing with the formulae in Section \ref{firststructure} shows that in the Frobenius setting  the functions $\Psi_r(U,\hbar)$ are flat sections of the first structure connection.
\end{remark}
 
 \subsection{FS structures}
 \label{fs}
 The  analysis of the previous three subsections applies in particular to the connection \eqref{conn} defined by a tame point  of a Frobenius manifold. 
For the purposes of comparison with the notion of a BPS structure in Section \ref{next} we will now introduce the notion of a Frobenius-Stokes (FS) structure. This consists essentially of the data of the leading term $U\in \fh^{\reg}$ of the connection \eqref{conn}, together with the element $R\in \god$ defined by \eqref{eq:g od} which encodes the Stokes factors. 
 
 \begin{remark}In the connections \eqref{conn} arising from Frobenius structures, the operator $V$ is skew-adjoint. This results in a symmetry property of the  associated Stokes factors. Indeed, according to Remark \ref{swim}, given a canonical solution \eqref{canskeg} on the half-plane $\bH_r$, we can define a canonical solution on the opposite half-plane $\bH_{-r}$ by setting
\[\Psi_{-r}(\hbar)=\Psi_r(-\hbar)^{-T}.\] It follows that the Stokes factors satsify
$\stokes(-\ell)=\stokes(\ell)^{-T}$, or equivalently, in the notation of \eqref{skegn}, that $D(-\alpha)=D(\alpha)$ for all $\alpha\in \Phi$.
\end{remark}

 Given a finite-dimensional complex vector space $T$ equipped with an unordered basis $\{f_1,\cdots,f_n\}$, we will denote by $\fh\subset \gl(T)$ the  distinguished Cartan subalgebra  consisting of endomorphisms which are diagonal in the given basis, and $\fh^{\reg}\subset \fh$ the open subset of  endomorphisms with one-dimensional eigenspaces. The corresponding root system $\Phi\subset \fh^*$ consists of the elements $f_i^*-f_j^*$  for  $i\neq j$. 

 \begin{defn}
\label{sira}
An \emph{FS structure}  $(T,U,D)$ consists of \begin{itemize}
\item[(a)] a complex vector space $T$ equipped with an unordered basis $\{f_1,\cdots,f_n\}$;\smallskip

\item[(b)] an endomorphism $U\in \fh^{\reg}\subset \fh\subset \gl(T)$; \smallskip

\item[(c)] a map of sets $D \colon \Phi\to \bC$ satisfying 
$D(-\alpha)=D(\alpha)$ for all $\alpha\in \Phi$.
\end{itemize}
\end{defn}

The Stokes rays of an FS structure are those of the form $\bR_{>0}\cdot U(\alpha)$, and the associated Stokes factors are defined by
\begin{equation}
\label{skegn2}\stokes_\ell=\exp\Big(\sum_{\alpha\in \Phi:U(\alpha)\in \ell}D(\alpha)\cdot  E_\alpha\Big)\in \exp\bigg(\bigoplus_{\alpha\in \Phi:U(\alpha)\in \ell} \fg_\alpha\bigg)\subset \GL(T),\end{equation}
where for each root $\alpha=f_i^*-f_j^*$ we denote by $E_\alpha=E_{ij}$ the distinguished generator  of the root space $\fg_\alpha$ defined by the relation $E_{ij}(f_m)=\delta_{im} f_j$.

\begin{remark}
\label{em}
Given a Frobenius manifold $M$ and a tame  point $p\in M$,  there is an associated FS structure obtained by taking the vector space $T=\cT_{M,p}$, the operator $U$ of multiplication by the Euler vector field,   and the basis $\{f_1,\cdots,f_n\}$ consisting of the normalised idempotents \eqref{bloob}.  The elements $D(\alpha)$ are determined by the Stokes data of the  connection \eqref{conn}. Note however that the definition of the  basis elements $f_i$ requires a choice of signs, or in other words a choice of section of a $\{\pm 1\}^n$ torsor over $M$. We make this remark now, because a very similar issue will arise in Section \ref{qr} in the context of Joyce structures. 
\end{remark}

\subsection{Variations of FS structures}
\label{vfs}

The following definition is again introduced for the 
purposes of comparison with the material of Section \ref{next}.  It is an abstraction of the iso-Stokes condition explained above.

\begin{defn}
\label{embryonic}
A \emph{variation of FS structures} over a complex manifold $M$ is a collection  of FS structures $(T_p,U_p,D_p)$ indexed by the points $p\in M$ such that
\begin{itemize}
\item[(a)] the vector spaces  $T_p$ form the fibres of a local system over $M$, and the basis $\{f_1,\cdots,f_n\}$ is covariantly constant;\smallskip
\item[(b)] the endomorphisms $U_p\in \gl(T_p)$ vary holomorphically;\smallskip
\item[(c)] for any convex sector $\Delta\subset \bC^*$, the clockwise composition
\[\stokes_p(\Delta)=\prod_{\ell\in \Delta} \stokes_p(\ell) \in \GL(T_p)\]
is covariantly constant as the point $p\in M$ varies, providing that  the boundary rays of $\Delta$  are never Stokes rays.
\end{itemize}
\end{defn}

What we mean by (a) more precisely is that there is a holomorphic vector bundle $\pi\colon T\to M$ equipped with a flat holomorphic connection $\nabla$ whose fibres can be identified with the vector spaces $T_p$, and that the unordered bases $\{f_1,\cdots,f_n\}\subset T_p$  are induced by  locally-defined covariantly constant sections of $T$. Condition (b) is then the statement that the operators $U_p$ are induced by a holomorphic endomorphism $U\in \End_M(T)$.
By covariantly constant in (c) we of course mean with respect to the connection induced by $\nabla$. 

Replacing $M$ by a contractible open subset or cover we can trivialise the  local system of vector spaces $T_p$ and obtain a holomorphic 
period map
\[\pi\colon M\to \fh\subset \gl_n(\bC), \qquad p\mapsto U_p.\] We say that a variation of BPS structures is \emph{miniversal} if the derivative of this map is everywhere non-vanishing, so that $\pi$ is a local isomorphism.

\begin{remark}
Let $M$ be a tame Frobenius manifold. Then the FS structures of Remark \ref{em} fit together to form a variation of FS structures over  $M$. The local system is given by the canonical connection $\nabla$ on the tangent bundle $\cT_M$. The only thing to check is the iso-Stokes condition, but this follows from 
Theorem \ref{emmy} together with the differential equation \eqref{diffeq}. This variation is miniversal, because the eigenvalues of the operator $U$ are local co-ordinates on $M$.
\end{remark}


\section{Joyce structures}
\label{joyce}

In this section we introduce the notion of a Joyce structure. This is  the geometric structure we expect to find on the space of stability conditions of a CY$_3$ triangulated category. The definition is  motivated by a strong analogy with  Frobenius structures which is summarised in Table \ref{table}. The basic idea is to replace the finite-dimensional Lie algebra $\gl_n(\bC)$ of Section \ref{form2} with the infinite-dimensional Lie algebra $\Vect_{\eta}(\bC^*)^n$ of   vector fields on the algebraic torus $(\bC^*)^n$ whose flows preserve a translation-invariant Poisson structure $\eta$. The original impetus for this  is the form of the wall-crossing formula in DT theory, which is an iso-Stokes condition of precisely the same form as \eqref{footy}, but taking values in the group of Poisson automorphisms of the torus $(\bC^*)^n$.

\begin{table}
\begin{center}
\begin{tabular}{|c|c|}
\hline
 $\gl_n(\bC)$ & $\Vect_\eta(\bC^*)^n$ \\
 \hline
 Frobenius structure & Joyce structure  \\ 
 Canonical co-ordinates & Central charge co-ordinates \\
 Levi-Civita connection & Joyce connection \\
 Product & Translation \\
 Deformed flat connection & Deformed Joyce connection \\
  Inverse transpose & Conjugation with inverse \\
Euler vector field & Central charge \\
Rotation coefficients $V_{ij}$ & Fourier coefficients $H_\alpha$ \\
Stokes factors & BPS automorphisms \\
FS structure & BPS structure \\\hline
\end{tabular}\end{center}\bigskip\caption{The analogy between Frobenius and Joyce structures. \label{table}}
\end{table}

\subsection{Lie algebra of vector fields}
\label{la}

We begin by discussing the abstract properties of the Lie algebra  which will replace the Lie algebra $\gl_n(\bC)$ in the Frobenius manifold story.  
The material here should be compared with that of Section \ref{form2}.

Consider a lattice $\Gamma\isom \bZ^{\oplus n}$ equipped with a skew-symmetric integral form
\[\eta\colon \Gamma\times \Gamma\to \bZ.\]
 Introduce
the algebraic torus
\[\bT_+=\Hom_\bZ(\Gamma,\bC^*)\isom (\bC^*)^n\]
and its co-ordinate ring, which is also the group ring of the lattice $\Gamma$
\[\bC[\bT_+]=\bC[\Gamma]\isom \bC[y_1^{\pm 1}, \cdots, y_n^{\pm 1}].\]
The  character of $\bT_+$ corresponding to an element $\gamma\in \Gamma$  will be denoted $y_\gamma$. The skew-symmetric form $\eta$ induces a translation invariant Poisson structure $\{-,-\}$  on the torus $\bT_+$, which is given on characters by
\begin{equation}
\label{poisson}\{y_{\alpha}, y_{\beta}\}= \eta(\alpha,\beta)\cdot y_{\alpha}\cdot y_{\beta}.\end{equation}

Consider the Lie algebra of  non-constant algebraic functions on $\bT_+$
\begin{equation}\label{wa} \god=\bigoplus_{\alpha\in \Gamma\setminus\{0\}} \fg_\alpha = \bigoplus_{\alpha\in \Gamma\setminus\{0\}} \bC\cdot y_\alpha\end{equation}
 equipped with the Lie bracket induced by the Poisson bracket $\{-,-\}$. The abelian Lie algebra \[\fh= \Hom_\bZ(\Gamma,\bC),\]
of translation-invariant vector fields on $\bT_+$ acts by derivations on $\god$ in the obvious way, and we can form the corresponding semi-direct product. This is the vector space direct sum $\fg=\fh\oplus \god$ equipped with the bracket defined on generators by
\[ \Big[(h_1,y_{\alpha_1}), (h_2,y_{\alpha_2})\Big]=\big(0,h_1(\alpha_2) y_{\alpha_2}-h_2(\alpha_1)y_{\alpha_1}+\eta(\alpha_1,\alpha_2)\cdot  y_{\alpha_1+\alpha_2}\big). \]
When the form $\eta$ is non-degenerate, the subalgebra $\fh\subset \fg$ is a Cartan subalgebra, and the decomposition \eqref{wa}  can be viewed as a root decomposition, with the roots being precisely the nonzero elements of $\Gamma$. 

There is an obvious  homomorphism of Lie algebras
\begin{equation}
\label{rho}\rho\colon \fg\to \operatorname{vect}_{\eta}(\bT_+),\end{equation}
to the Lie algebra of algebraic vector fields on the torus $\bT_+$ whose flows preserve the Poisson structure $\{-,-\}$. It acts by the identity on $\fh$ and sends a function $f\in\god$ to the corresponding Hamiltonian vector field $\Ham_f$. In particular, if we choose a basis $(\gamma_1,\cdots,\gamma_n)\subset \Gamma$ and write $y_i=y_{\gamma_i}$, the homomorphism $\rho$ is defined on generators by
\[h\in \fh\mapsto \sum_i h(\gamma_i) \cdot y_i \frac{\partial}{\partial y_i}, \qquad y_\alpha\in \god\mapsto \sum_{j} \eta(\alpha,\gamma_j)\cdot y_\alpha\cdot y_j \frac{\partial}{\partial y_j}.\]
In general the map $\rho$ has a large kernel: it is an isomorphism precisely when the form $\eta$ is non-degenerate.
We do not want to make this assumption in general, since it fails in some interesting examples.
There is an involution  $\iota\colon \fg\to \fg$  induced by the inverse map of the torus $\bT_+$. 
It acts on the generators considered above via
\[\iota(h,y_\alpha)=(-h,y_{-\alpha}).\]
In particular, the abelian subalgebra $\fh\subset \fg$ is contained in the $-1$ eigenspace of $\iota$. 

\subsection{Structures on the tangent bundle}
\label{geom}

Consider a complex manifold $M$ equipped with a flat, torsion-free connection $\nabla$ on the tangent bundle $\cT_M$. Suppose also that there is a covariantly constant lattice
$\Gamma_M\subset \cT^*_{M}$,  and a covariantly constant skew-symmetric form \[\eta\colon \cT^*_M\times\cT^*_M\to \cO_M\]
which takes integral values on $\Gamma_M$. 
Given a  holomorphic vector field $X$ on $M$ there are then two lifts  of $X$ to vector fields on the total space of the bundle $\pi\colon \cT_M\to M$, both of which will be important in what follows. 

For the first lift of $X$, note that the vertical tangent vectors to the space $\cT_M$ at a point $x\in \cT_M$  are in natural correspondence with the elements of the vector space $\cT_{M,\pi(x)}$. Thus there is a vertical vector field $m_X$ on $\cT_M$, obtained by mapping a  point $x\in \cT_M$  to the vertical tangent vector corresponding to $X_{\pi(x)}\in \cT_{M,\pi(x)}$. Clearly the restriction of $m_X$ to each fibre $\pi^{-1}(p)=\cT_{M,p}$ is invariant under  translations. 

The second lift of the vector field $X$, which we denote by $\cH_X$, is induced by  $\nabla$ viewed   as an Ehresmann connection on the bundle $\pi\colon \cT_M\to M$. From this point-of-view $\nabla$ corresponds to a choice of sub-bundle $\cH\subset \cT_{\cT_M}$ of the tangent bundle to the total space of $\cT_M$, which is everywhere complementary to the sub-bundle $\cV\subset \cT_{\cT_M}$ of vertical tangent vectors for the map $\pi$.  Thus the derivative of $\pi$ then induces an isomorphism $\pi_*\colon \cH_x\to \cT_{M,\pi(x)}$ at each point $x\in \cT_M$. The lift $\cH_X$ is then defined to be the unique holomorphic vector field on $\cT_M$ which takes values in the horizontal sub-bundle $\cH\subset \cT_{\cT_M}$ and satisfies  $\pi_*(\cH_X)=X$. 

To write all this in co-ordinates,  take a basis $(\gamma_1,\cdots,\gamma_n)\subset \Gamma_{M,p}$ at some point $p\in M$, and extend to a covariantly constant basis of the bundle of lattices $\Gamma_M$ using the connection $\nabla$. In this basis the form $\eta$ is given by a constant skew-symmetric integral matrix
\[\eta(\gamma_i,\gamma_j)=\eta_{ij}\in \bZ.\]

Since the connection $\nabla$ is torsion-free,  there are local co-ordinates $(z_1,\cdots,z_n)$ on the manifold $M$ satisfying $\gamma_i=dz_i$.  We can then define co-ordinates $(\theta_1,\cdots,\theta_n)$ on the fibre $\cT_{M,p}$  over a point $p\in M$  by writing a vector $X\in \cT_{M,p}$ in the form\[X=\sum_{i=1}^n \theta_i \cdot \frac{\partial}{\partial z_i}.\]
In the resulting local co-ordinate system $(z_1,\cdots,z_n, \theta_1,\cdots,\theta_n)$ on the manifold $\cT_{M}$ we have
\[m_{\frac{\partial}{\partial z_i}}=\frac{\partial}{\partial \theta_i}, \qquad \cH_{\frac{\partial}{\partial z_i}}= \frac{\partial}{\partial z_i}.\]

For each point $p\in M$, the skew-symmetric form $\eta_p$ induces a translation-invariant Poisson structure on the fibre $\cT_{M,p}$. These combine to give a Poisson structure on the space $\cT_M$, which we call the \emph{vertical Poisson structure} induced by the form $\eta$. In co-ordinates it is given by
\[\{f,g\}=\sum_{i,j} \eta_{ij} \cdot \frac{\partial f}{\partial \theta_i}\cdot \frac{\partial g}{\partial \theta_j}.\]

At each point $p\in M$ we can apply the construction of the previous subsection to the lattice $\Gamma_p\subset \cT_{M,p}$ and  the skew-symmetric form $\eta_p$. The corresponding tori are
\begin{equation}\label{t}\bT_{M,p}=\Hom_{\bZ}(\Gamma_{M,p},\bC^*)=\cT_{M,p}/\Gamma_{M,p}^*.\end{equation} This results in a family of Lie algebras 
$\fg_{M,p}$ which fit together to form a bundle of Lie algebras  $\fg_M$  over $M$.   The abelian subalgebras $\fh_{M,p}\subset \fg_{M,p}$ form a sub-bundle $\fh_M\subset \fg_M$, and the map $m$ considered above  defines a bundle isomorphism
\begin{equation}
\label{seagulls}m\colon \cT_M\to \fh_M.\end{equation}
Since the connection $\nabla$  preserves the form $\eta$ and the bundle of lattices $\Gamma_M\subset \cT_M^*$, it induces a connection on the bundle of Lie algebras $\fg_M$, which we also denote by $\nabla$.

\subsection{Heuristics for the definition}
\label{heur}
In this subsection we explain the heuristics behind the definition of a Joyce structure which will be given in the next subsection. We continue with the notation of  the previous two subsections. The basic idea is to follow the lead of Section  \ref{firststructure} and look for maps
\[m\colon \cT_M\to \fg_M,\qquad \Theta\colon \cT_M\to \fg_M,\]
whose images lie in the $+1$ and $-1$ eigenspaces of $\iota$ respectively, and satisfying the same relations
\begin{equation}\label{nfirsts}[m_X,m_Y]=0, \qquad [m_X,\Theta_Y]=[m_Y,\Theta_X], \end{equation}
\begin{equation}
\label{nthirds}
m_{[X,Y]}=\nabla_X(m_Y)-\nabla_Y(m_X), \qquad \nabla_X(m_E)=m_X,
\end{equation}
\begin{equation}
\label{nseconds} \Theta_{[X,Y]}-[\Theta_X,\Theta_Y]=\nabla_X(\Theta_Y)-\nabla_Y(\Theta_X), \qquad \nabla_X(\Theta_E) = [\Theta_E,\Theta_X], \end{equation}
for some holomorphic vector field $E\colon \O_M\to \cT_M$.

The first equation of \eqref{nfirsts} shows that   the image of the map $m$ should be a bundle of  abelian subalgebras of $\fg_M$, and   there is then an obvious choice, namely the map  \eqref{seagulls}. The image of the map $\Theta$ necessarily lies in the bundle of subalgebras $\fg^{\od}_M\subset \fg_M$. Thus it assigns to each tangent vector $X\in \cT_{M,p}$ a non-constant algebraic function $\Theta_{X,p}$ on the  torus \eqref{t}. Pulling back via the quotient map $\cT_{M,p}\to \bT_{M,p}$, we can also view $\Theta_{X,p}$ as a holomorphic function on the vector space $\cT_{M,p}$. Thus if  $X$ is a holomorphic vector field on $M$ we can  view $\Theta_X$ as a holomorphic function on the total space $\cT_M$.
 
As in Section  \ref{firststructure}, the  second relation of \eqref{nfirsts} allows us to encode the map $\Theta$ more efficiently in the form $\Theta_X=m_X(J)$, where 
\[J\colon \cT_M\to \bC\]
is a holomorphic function. Note that if we interpret the map $\Theta$   literally as taking values in the bundle of Lie algebras $\fg_M$, then the restriction of the function $J$  to each fibre $\cT_{M,p}$ should descend to an algebraic function on the  torus \eqref{t}. 

The connection $\nabla$ on the bundle of Lie algebras $\fg_M$ induces a connection on the sub-bundle $\fh_M\subset \fg_M$, which under the isomorphism \eqref{seagulls}  reduces to  the original connection $\nabla$ on the bundle $\cT_M$. 
The first relation of \eqref{nthirds} is then the condition that the connection $\nabla$ is torsion-free, and the second  becomes the statement that $\nabla_X(E)=X$ for all vector fields $X$ on $M$.

The first relation of \eqref{nseconds}  implies a non-linear partial differential equation for the  function $J$, namely
\[\cH_X(m_Y(J))-\cH_Y(m_X(J))=m_{[X,Y]}(J)-\{m_X(J),m_Y(J)\}. \]
This is written out in co-ordinates in \eqref{fl} below. The second relation of \eqref{nseconds} follows from this if we also impose the homogeneity relation $\cH_E(J)=-J$. 

 In what follows, this heuristic picture will need to be modified in several ways.
Firstly we must  allow the function $J\colon \cT_M\to \bC$ to have poles, and   drop the condition that the restrictions of $J$  to the fibres $\cT_{M,p}$ are induced by  algebraic functions on the  tori \eqref{t}. Secondly, it is really the second derivatives of $J$ appearing in the connection \eqref{matrix} below that are the most relevant quantities: the local existence of the function $J$ is just a convenient way to encode the  symmetries of its higher derivatives. In particular, the function  $J$ should only be considered as being  well-defined up to  transformations of the form
\begin{equation}\label{indet}J(z_i,\theta_j)\mapsto J(z_i,\theta_j)+ L(z_i,\theta_j),\end{equation}
where $L(z_i,\theta_j)$ is  a polynomial in the $\theta_j$ co-ordinates of degree at most 1. Moreover, since poles in the  second derivatives  of $J$ can lead to multi-valued logarithmic factors, we should only insist that the function $J$ itself is  well-defined  on a dense open subset of $\cT_M$. 

\subsection{Definition of a Joyce structure}
\label{boring}

After the heuristic discussion of the last subsection we can now proceed to a precise  definition of a Joyce structure. In the next subsection we will give a description in terms of co-ordinates, which of course may be more helpful.

\begin{definition}
\label{joycestructure}
A \emph{Joyce structure} on a complex manifold $M$ consists of the following data:
\begin{itemize}
\item[(i)] a flat, torsion-free connection $\nabla$ on the  tangent bundle $\cT_M$, together with a covariantly constant full rank sublattice
$\Gamma_M\subset \cT^*_{M}$;\smallskip
\item[(ii)] a covariantly constant skew-symmetric form $\eta\colon \cT^*_M\times\cT^*_M\to \cO_M$  taking integer values on the sublattice $\Gamma_M\subset \cT_M^*$;\smallskip
\item[(iii)]  the \emph{Joyce function}: a holomorphic function $J\colon U\to \bC$ defined on a dense open subset $U\subset \cT_M$;\smallskip
\item[(iv)] the   \emph{Euler vector field}: a holomorphic section
$E\colon \O_M\to \cT_M$;
\end{itemize}
satisfying the following conditions:
\begin{itemize}
\item[(J1)] for any two vector fields $X_1,X_2$ on $M$, the function $m_{X_1} m_{X_2}(J)$ extends to a meromorphic function on $\cT_M$, whose restriction to any fibre $\cT_{M,p}$ is an odd function;
\smallskip
\item[(J2)] for any three vector fields $X_1,X_2,X_3$ on $M$, the function $m_{X_1} m_{X_2} m_{X_3} (J)$ extends to a meromorphic function on $\cT_M$ which is invariant under translations by elements of the  sublattice $(2\pi i)\cdot \Gamma_{M,p}^*\subset \cT_{M,p}$;
\smallskip
\item[(J3)] for any two vector fields $X,Y$ on $M$, there is a relation
\[\cH_X(m_Y(J))-\cH_Y(m_X(J))=m_{[X,Y]}(J)-\{m_X(J),m_Y(J)\};\]
\item[(J4)] the vector field $E$ satisfies the relations
\[\nabla(E)=\id, \qquad  \cH_E(J)=-J.\]
\end{itemize}
\end{definition}

Here, as in Section \ref{geom}, given a vector field $X$ on $M$, we denote by $m_X$ the corresponding translation-invariant vertical vector field on $\cT_M$, and by $\cH_X$ the $\nabla$-horizontal lift of $X$ to  $\cT_M$. The  bracket $\{-,-\}$  appearing in (J3)  is the vertical Poisson bracket  on $\cT_M$   induced  by the form $\eta$. By an odd function on a vector space in axiom (J1) we mean one that satisfies $f(-v)=-f(v)$.

\subsection{Co-ordinate description}

Suppose given a Joyce structure as in Definition \ref{joycestructure}, and take a co-ordinate system $(z_1,\cdots,z_n,\theta_1,\cdots,\theta_n)$ on the tangent bundle $\cT_M$ as in Section \ref{geom}. In particular the lattice $\Gamma_M\subset \cT_M^*$ is spanned by the forms $dz_i$.  Axiom (J4) shows that after applying a transformation $z_i\mapsto z_i+c_i$,  the Euler vector field  $E$ takes the form
\[E=\sum_i z_i \cdot \frac{\partial}{\partial z_i}.\]
The conditions on the Joyce function $J(z_1,\cdots, z_n,\theta_1,\cdots, \theta_n)$ are then: 
\begin{itemize}
\item[(i)]  $J$  has the homogeneity properties
\begin{equation}
\label{oddd}
J(z_1,\cdots,z_n,-\theta_1,\cdots,-\theta_n)=-J(z_1,\cdots, z_n,\theta_1,\cdots,\theta_n),\end{equation}\begin{equation}
\label{homo}
J(\lambda\cdot z_1,\cdots,\lambda\cdot z_n,\theta_1,\cdots,\theta_n)=\lambda^{-1}\cdot J(z_1,\cdots, z_n,\theta_1,\cdots,\theta_n);\end{equation}
\item[(ii)] the second partial derivatives  of $J$ with respect to the $\theta_j$ co-ordinates 
extend to single-valued meromorphic functions on $\cT_M$, and the third partial derivatives  are  invariant under $\theta_j\mapsto \theta_j+2\pi i$;\smallskip
\item[(iii)] $J$ satisfies the differential equation 
\begin{equation}
\label{fl}
\frac{\partial^2 J}{\partial \theta_i \partial z_j}-\frac{\partial^2 J}{\partial \theta_j \partial z_i}=\sum_{p,q} \eta_{pq} \cdot \frac{\partial^2 J}{\partial \theta_i \partial \theta_p} \cdot \frac{\partial^2 J}{\partial \theta_j \partial \theta_q}.
\end{equation}\end{itemize}

\begin{remark}
When the form $\eta$ is non-degenerate, it is natural to try to verify the  equation   \eqref{fl} by checking that the  Hamiltonian vector fields on $\cT_M$ defined by the two sides are equal. This gives rise to a situation where   \emph{a priori} we only know that the equality \eqref{fl}   holds up to the addition of  a function independent of the variables $\theta_i$. In fact, it turns out that  this function can always be eliminated by applying particular transformations of the form  \eqref{indet} to the function $J$. Indeed, if we  replace $J$ by the expression
\[J(z_i,\theta_j)-\sum_k \theta_k\cdot \frac{\partial J}{\partial \theta_k}(z_i,0),\]
then each of the terms on the left-hand side of the expression \eqref{fl} vanishes along the locus where all $\theta_i=0$. But the right-hand side also vanishes along this locus, because by part (i) the function $J$ is odd in the  co-ordinates $\theta_j$.  So if the difference of the two sides of \eqref{fl} is independent of the  co-ordinates $\theta_j$, then after the above modification it must  vanish identically.
\end{remark}

\subsection{Deformed Joyce connection}

Suppose given a  Joyce structure on a complex manifold $M$. The formula 
\begin{equation}
\label{shrike}{\cA}^{(\hbar)}_X = \nabla_X + \Theta_X+ \frac{1}{\hbar} \cdot m_X\end{equation}defines a pencil of  connections with values in the bundle of Lie algebras $\fg_M$.
Applying the homomorphism  of Lie algebras $\rho\colon \fg\to \operatorname{vect}_\eta(\bT_+)$ of Section \ref{la} fibrewise then gives a pencil of flat Ehresmann connections on the tangent bundle $\pi\colon \cT_M\to M$, whose horizontal sections are spanned by the vector fields
\begin{equation}
\label{strike}\cA^{(\hbar)}_X= \cH_X +\Ham_{m_X(J)} +\frac{1}{\hbar} \cdot m_X.\end{equation}
In terms of a local co-ordinate system $(z_1,\cdots,z_n,\theta_1,\cdots,\theta_n)$ as in Section \ref{geom} we have
\begin{equation}
\label{matrix}\cA^{(\hbar)}_{\frac{\partial}{\partial z_i}}= \frac{\partial}{\partial z_i} +\sum_{j,k} \eta_{jk}\cdot \frac{\partial^2 J}{\partial \theta_i\partial \theta_j} \cdot \frac{\partial}{\partial \theta_k} +\frac{1}{\hbar} \cdot \frac{\partial}{\partial \theta_i}.\end{equation}

As in the Frobenius case we actually get more, namely  an Ehresmann  connection on the bundle $p^*(\cT_M)$ over the space $M\times \bP^1$, where $p\colon M\times \bP^1 \to M$ is the projection map.
 The space of  horizontal sections of this connection is spanned by the vector fields 
\begin{equation}
\label{extjoyce}{\cA}_X= \cH_X +\Ham_{m_X(J)} +\frac{1}{\hbar} \cdot m_X, \qquad {\cA}_{\frac{\partial}{\partial \hbar}}=\frac{\partial}{\partial \hbar} - \frac{1}{\hbar}\cdot \Ham_{m_E(J)} -\frac{1}{\hbar^2} \cdot m_E,\end{equation}
where we abuse notation by identifying  vector fields on the spaces $M$ and $\cT_M$ with vector fields on $M\times \bP^1$ and $\cT_M\times \bP^1$ respectively, which are constant in the $\bP^1$ direction.
We call \eqref{extjoyce}  the \emph{deformed Joyce connection}.

\begin{prop}
The deformed Joyce connection  is flat.
\end{prop}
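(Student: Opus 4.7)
The plan is to verify flatness by checking involutivity of the horizontal distribution of the Ehresmann connection $\cA$ on $p^*(\cT_M)\to M\times\bP^1$. Because horizontal lifts project back to their base counterparts, both $[\cA_X,\cA_Y]-\cA_{[X,Y]}$ and $[\cA_X,\cA_{\partial/\partial\hbar}]$ are automatically vertical, and involutivity then forces each to vanish identically. Both identities are local, so I will work in flat coordinates $(z_i,\theta_j)$ on $\cT_M$ as set up in Section \ref{geom}, and take $X=\partial/\partial z_i$, $Y=\partial/\partial z_j$ (which commute and are $\nabla$-parallel). The entire argument then rests on four elementary commutation rules derived directly from the coordinate formulae: $[m_X,m_Y]=0$, $[\cH_X,m_Y]=m_{\nabla_X Y}$, $[\Ham_f,m_Y]=-\Ham_{m_Y(f)}$, and $[\Ham_f,\Ham_g]=\Ham_{\{f,g\}}$.

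For the first identity, I would expand $[\cA_X,\cA_Y]$ as a Laurent polynomial in $\hbar$ of degree between $-2$ and $0$. The $\hbar^{-2}$ coefficient is $[m_X,m_Y]=0$. The $\hbar^{-1}$ coefficient splits into contributions of the form $[\cH_X,m_Y]-[\cH_Y,m_X]$, which vanish since $X,Y$ are $\nabla$-parallel, and $[\Ham_{m_X(J)},m_Y]-[\Ham_{m_Y(J)},m_X]$, which cancel via $[\Ham_f,m_Y]=-\Ham_{m_Y(f)}$ together with the symmetry $m_Xm_Y(J)=m_Ym_X(J)$. The $\hbar^0$ coefficient equals
\[
\Ham_{\cH_X m_Y(J)-\cH_Y m_X(J)-\{m_X(J),m_Y(J)\}},
\]
which is precisely the Hamiltonian vector field built from the two sides of axiom (J3) applied to $(X,Y)$, and so vanishes.

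For $[\cA_X,\cA_{\partial/\partial\hbar}]=0$, a similar expansion produces contributions at orders $\hbar^{-3}$ through $\hbar^{0}$. The $\hbar^{-3}$ piece $-\hbar^{-3}[m_X,m_E]$ vanishes since both $m_X$ and $m_E$ are vertical with $\theta$-independent coefficients. The $\hbar^{-2}$ terms combine $[\cH_X,-\hbar^{-2}m_E]=-\hbar^{-2}m_{\nabla_XE}=-\hbar^{-2}m_X$ (using $\nabla(E)=\id$ from (J4)) with the Leibniz-type contribution $[\hbar^{-1}m_X,\partial/\partial\hbar]=+\hbar^{-2}m_X$, while two Hamiltonian cross-terms involving $m_E$ cancel by the identity $[\Ham_f,m_E]=-\Ham_{m_E(f)}$ and $m_Xm_E(J)=m_Em_X(J)$. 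The $\hbar^{-1}$ coefficient then reduces to $\Ham_{\cH_X m_E(J)+\{m_X(J),m_E(J)\}}$. To see this vanishes I would invoke (J3) applied to the pair $(X,E)$,
\[
\cH_X m_E(J)-\cH_E m_X(J)=m_{[X,E]}(J)-\{m_X(J),m_E(J)\},
\]
and combine it with the homogeneity $\cH_E(J)=-J$, which gives $\cH_E m_X(J)=-m_X(J)+m_{\nabla_EX}(J)$ together with $[X,E]=X-\nabla_EX$; substituting, all extra terms cancel to leave exactly $\cH_X m_E(J)+\{m_X(J),m_E(J)\}=0$. The $\hbar^0$ terms vanish trivially.

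The main obstacle is not any individual computation but the bookkeeping around the $\hbar^{-1}$ piece in the $\partial/\partial\hbar$-direction, where both defining axioms (J3) and (J4) must be combined — together with $\nabla(E)=\id$ and the torsion-freeness of $\nabla$ — to produce the required single cancellation. Everything else is mechanical once the four commutation rules above are in hand.
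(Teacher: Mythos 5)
Your proof is correct, and it verifies the same identities as the paper does, but by a more explicit and self-contained route. The paper's proof works one level up: it observes that the deformed Joyce connection is the image, under the fibrewise Lie algebra homomorphism $\rho\colon \fg\to \operatorname{vect}_\eta(\bT_+)$, of the abstract $\fg_M$-valued connection $\cA_X=\nabla_X+\Theta_X+\hbar^{-1}m_X$, reduces flatness to the relations \eqref{nfirst}, \eqref{nthird}, \eqref{nsecond} exactly as in the Frobenius case of Section \ref{firststructure}, and then cites Section \ref{heur} for the fact that these relations are equivalent to the Joyce axioms. Your coefficient-by-coefficient expansion in $\hbar$ reproduces precisely those relations at the level of vector fields on $\cT_M$, and you actually carry out the bracket computations that the paper delegates to Sections \ref{heur} and \ref{firststructure} (where part of the analogous verification is explicitly left to the reader). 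What the paper's packaging buys is the stronger, $\fg_M$-valued statement, which retains information when $\eta$ is degenerate and $\rho$ has a kernel (cf.\ Remark \ref{andrea}); what your version buys is a direct, convention-checkable proof of exactly what the proposition asserts, with the only non-mechanical step — the $\hbar^{-1}$ coefficient in the $\partial/\partial\hbar$ direction, where (J3) for the pair $(X,E)$ must be combined with $\cH_E(J)=-J$, $\nabla(E)=\id$ and torsion-freeness — handled correctly. One small slip: with your own convention $[\Ham_f,\Ham_g]=\Ham_{\{f,g\}}$, the $\hbar^0$ coefficient of $[\cA_X,\cA_Y]$ is the Hamiltonian vector field of $\cH_X m_Y(J)-\cH_Y m_X(J)+\{m_X(J),m_Y(J)\}$, with a plus sign on the Poisson bracket; this is exactly what axiom (J3) with $[X,Y]=0$ kills, so the displayed minus sign is a typo that does not affect the argument.
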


\begin{proof}
The deformed Joyce connection \eqref{extjoyce} is obtained by applying the homomorphism of Lie algebras $\rho\colon \fg\to \operatorname{vect}_\eta(\bT_+)$ of Section \ref{la} fibrewise to the  connection
\[{\cA}_X = \nabla_X + \Theta_X+ \frac{1}{\hbar} \cdot m_X,\qquad {\cA}_{ \frac{\partial}{\partial \hbar}} = \frac{\partial}{\partial \hbar}-\frac{1}{\hbar}\cdot {\Theta_E} -\frac{1}{\hbar^2} \cdot m_E.\]
Exactly as in Section \ref{firststructure}, the flatness of this connection is equivalent to the relations 
\eqref{nfirst}, \eqref{nthird} and \eqref{nsecond}. As explained in Section \ref{heur}, these follow directly from the axioms of a Joyce structure.
\end{proof}

Setting $\hbar=\infty$ in \eqref{strike} gives a particular flat Ehresmann connection on $\cT_M$ which we call the \emph{Joyce connection}. It is the analogue of the Levi-Civita connection on a  Frobenius manifold.
In co-ordinates the horizontal sections of the Joyce connection are spanned by the vector fields
\begin{equation}
\label{nons}\cA_{\frac{\partial}{\partial z_i}}= \frac{\partial}{\partial z_i} + \sum_{j,k} \eta_{jk}\cdot \frac{\partial^2 J}{\partial \theta_i\partial \theta_j} \cdot \frac{\partial}{\partial \theta_k}.\end{equation}

\begin{remark}\label{andrea}Note that if the form $\eta$ fails to be non-degenerate the homomorphism $\rho$ of \eqref{rho} has a kernel, and  the passage from \eqref{shrike} to \eqref{strike} therefore loses information. In the extreme case  $\eta=0$, all Hamiltonian vector fields vanish, and the Joyce connection \eqref{extjoyce} coincides with the linear connection $\nabla$. We shall return to this point in Section \ref{sdouble}.
\end{remark} 

\subsection{Isomonodromic family and Stokes data}
As with the deformed flat connection on a Frobenius manifold, there are two complementary points-of-view on the deformed Joyce connection \eqref{extjoyce}. In the first, we forget the derivatives in the $\hbar$-direction, and view \eqref{strike} as defining a pencil of flat Ehresmann connections on $\cT_M$ which deform the Joyce connection \eqref{nons}. In the second, we consider the family of meromorphic connections on $\bP^1$ of the form
\begin{equation}
\label{biggy}\nabla=d-\bigg(\frac{Z}{\hbar^2}+\frac{H}{\hbar} \bigg)d\hbar,\end{equation}
parameterised by the points $p\in M$, and taking values in the Lie algebras $\fg=\fg_{M,p}$. Here $Z=m_E$ is a translation-invariant vector field on the fibre $\cT=\cT_{M,p}$, and at least heuristically,  $H=\Theta_E=m_E(J)$ is a holomorphic function on $\cT$ which descends to an algebraic function on the torus $\bT=\cT/\Gamma^*$. In co-ordinates we have
\begin{equation}
\label{gola}Z=\sum_i z_i\cdot \frac{\partial}{\partial \theta_i}\in\fh, \qquad H=\sum_i z_i \cdot \frac{\partial J}{\partial \theta_i}\in \fg^{\od}.\end{equation}
As we discussed above, in practice, the function $J$   need not restrict to give  an algebraic function on the tori \eqref{t}, and the same remark applies to $H$. Nonetheless, it is sometimes useful for heuristic purposes to identify $H$ with its Fourier expansion
\begin{equation}
\label{flu}H=\sum_{\gamma\in \Gamma\setminus\{0\}} H(\gamma)\cdot  x^\gamma.\end{equation}
Note that the assumption that $J$ restricts to an odd function on $\cT$ means that $H$ is even, and hence $H(-\gamma)=H(\gamma)$. This is the condition that $H
\in \fg$ lies in the $+1$ eigenspace of the involution $\iota$ of Section \ref{la}.

Recall that the roots of the Lie algebra $\fg$ of Section \ref{la} are  the non-zero elements of the lattice $\Gamma=\Gamma_{M,p}$. Following the ideas of Section \ref{sthree}, but replacing the Lie algebra $\gl(\cT_{M,p})$ of Section \ref{form2} with the Lie algebra $\fg_{M,p}$ of Section \ref{la}, we should  expect the Stokes data of the connection \eqref{biggy} at the singularity $\hbar=0$ to consist of  Stokes rays of the form $\bR_{>0}\cdot Z(\gamma)\subset \bC^*$ for nonzero elements $\gamma\in \Gamma$, and associated Stokes factors
\[\stokes(\ell)=\exp\Big(-\hspace{-.8em}\sum_{\gamma\in \Gamma: Z(\gamma)\in \ell} \DT(\gamma) \cdot x_\gamma\Big),\]
encoded by elements $\DT(\gamma)\in \bC$ exactly as in \eqref{skegn2}, and satisfying the symmetry condition $\DT(-\gamma)=\DT(\gamma)$.  In the context of stability conditions on CY$_3$ triangulated categories, it is these elements  which it is natural to identify with the DT invariants, and we will therefore restrict attention to the case when all $\DT(\gamma)\in \bQ$. The Stokes data is then encoded by the notion of a BPS  structure which is the subject of the next section.


\section{BPS structures}
\label{next}

In this section we recall the basic definitions concerning BPS structures from \cite{RHDT}. A BPS structure  axiomatises the output of unrefined Donaldson-Thomas theory applied to a CY$_3$ triangulated category. A variation of BPS structures describes the way the invariants change under variation of stability condition, the main ingredient being the Kontsevich-Soibelman wall-crossing formula.  In the analogy described in this paper, BPS structures correspond to the FS structures of Section \ref{fs}: they encode the Stokes data of the connection \eqref{biggy}.

\subsection{BPS structures}

The following definition  is a special case of the notion of stability data on a graded Lie algebra \cite[Section 2.1]{KS}. It was also studied by Stoppa and his collaborators \cite[Section 3]{BSt}, \cite[Section 2]{FGS}.

\begin{defn}
\label{bpss}
A \emph{BPS structure}  consists of \begin{itemize}
\item[(a)] a finite-rank free abelian group $\Gamma\isom \bZ^{\oplus n}$, equipped with a skew-symmetric form \[\<-,-\>\colon \Gamma \times \Gamma \to \bZ;\]

\item[(b)] a homomorphism of abelian groups
$Z\colon \Gamma\to \bC$;\smallskip

\item[(c)] a map of sets
$\Omega\colon \Gamma\to \bQ;$
\end{itemize}

satisfying the following properties:

\begin{itemize}
\item[(i)] symmetry: $\Omega(-\gamma)=\Omega(\gamma)$ for all $\gamma\in \Gamma$;\smallskip
\item[(ii)] support property: fixing a norm  $\|\cdot\|$ on the finite-dimensional vector space $\Gamma\tensor_\bZ \bR$, there is a constant $C>0$ such that 
\begin{equation}
\label{support}\Omega(\gamma)\neq 0 \implies |Z(\gamma)|> C\cdot \|\gamma\|.\end{equation}
\end{itemize}
\end{defn}

The lattice $\Gamma$ will be  called the \emph{charge lattice}, and the form $\<-,-\>$ is the \emph{intersection form}. The group homomorphism $Z$ is  called the \emph{central charge}. The rational numbers $\Omega(\gamma)$ are called \emph{BPS invariants}. A class $\gamma\in \Gamma$ will be called \emph{active} if $\Omega(\gamma)\neq 0$. The \emph{Donaldson-Thomas (DT) invariants}  are defined by the expression
\begin{equation}
\label{bps}\DT(\gamma)=\sum_{ \gamma=m\alpha}   \frac{1}{m^2} \, \Omega(\alpha)\in \bQ,\end{equation}
where the sum is over integers $m>0$ such that $\gamma$ is divisible by $m$ in the lattice $\Gamma$.

It will be useful to  introduce some  terminology for describing BPS structures of various special kinds.

\begin{defn}
\label{blahblah}
We say that a BPS structure $(\Gamma,Z,\Omega)$ is
\begin{itemize}

\item[(a)] \emph{finite} if there are only finitely many nonzero BPS invariants $\Omega(\gamma)$;\smallskip

\item[(b)]  \emph{ray-finite}, if for any ray $\ell\subset \bC^*$ there are only finitely many active classes $\gamma\in \Gamma$ for which $Z(\gamma)\in \ell$;\smallskip

\item[(c)] \emph{convergent}, if for some $R>0$
\[\big.\sum_{\gamma\in \Gamma} |\Omega(\gamma)|\cdot e^{-R|Z(\gamma)|}<\infty;\]

\item[(d)] \emph{uncoupled}, if  for any two active classes $\gamma_1,\gamma_2\in \Gamma$  one has
 $\<\gamma_1,\gamma_2\>=0$;\smallskip

\item[(e)] \emph{generic}, if for any two active classes $\gamma_1,\gamma_2\in \Gamma$ one has 
\[\bR_{>0} \cdot Z(\gamma_1)=\bR_{>0} \cdot Z(\gamma_2) \implies \< \gamma_1, \gamma_2\>=0;\]

\item[(f)] \emph{integral}, if the BPS invariants $\Omega(\gamma)\in \bZ$ are all integers.
\end{itemize}
\end{defn}

\subsection{Twisted torus}
\label{twto}

As well as the algebraic torus $\bT_+$, we will also consider an associated torsor
\[\bT_-= \big\{g\colon \Gamma \to \bC^*: g(\gamma_1+\gamma_2)=(-1)^{\<\gamma_1,\gamma_2\>} g(\gamma_1)\cdot g(\gamma_2)\big\},\]
which we call the \emph{twisted torus}.
The torus $\bT_+$ acts freely and transitively on the twisted torus $\bT_-$  via
\[(f\cdot g)(\gamma)=f(\gamma)\cdot g(\gamma)\in \bC^*, \quad f\in \bT_+, \quad  g\in \bT_-.\]
Choosing a base-point $\sigma\in \bT_-$ therefore gives a bijection \begin{equation}
\label{bloxy}\rho_{\sigma}\colon \bT_+\to \bT_-, \qquad f\mapsto f\cdot \sigma.\end{equation}

We can use the identification $\rho_{\sigma}$ to give $\bT_-$ the structure of an algebraic variety. The result is independent of the choice of $g_0$, since the translation maps on $\bT_+$ are algebraic. 
The co-ordinate ring of $\bT_-$ is spanned as a vector space by the functions
\[x_\gamma\colon \bT_-\to \bC^*, \qquad x_\gamma(g)=g(\gamma)\in \bC^*,\]
which we  refer to as \emph{twisted characters}.  Thus
\[\bC[\bT_-]=\bigoplus_{\gamma\in \Gamma} \bC\cdot x_\gamma, \qquad x_{\gamma_1}\cdot x_{\gamma_2}=(-1)^{\<\gamma_1,\gamma_2\>}\cdot x_{\gamma_1+\gamma_2}.\]
Similarly, the Poisson structure \eqref{poisson} on $\bT_+$  is invariant under translations, and hence can be transferred to $\bT_-$ via the map \eqref{bloxy}. It is given on twisted characters by 
 \[\{x_{\alpha}, x_{\beta}\}= \<\alpha,\beta\>\cdot x_{\alpha}\cdot x_{\beta}.\]

There is an involution of $\bT_-$, which we call the \emph{twisted inverse map}, which acts on twisted characters via $x_{+\gamma}\leftrightarrow x_{-\gamma}$. The identification  $\rho_\sigma$ intertwines this twisted inverse map on $\bT_-$ with the standard inverse map on $\bT_+$ precisely if the base-point 
 $\sigma$ lies in the subset
\[\Big\{\sigma\colon \Gamma\to \{\pm 1\} \b \sigma(\gamma_1+\gamma_2)=(-1)^{\langle\gamma_1,\gamma_2\rangle}\cdot \sigma(\gamma_1)\cdot \sigma(\gamma_2)\Big\}\subset \bT_-,\]
whose points are called \emph{quadratic refinements} of the form $\<-,-\>$.

\subsection{BPS automorphisms}
 
Associated to any ray $\ell\subset \bC^*$ is a formal sum of twisted characters
\[\DT(\ell)=-\hspace{-.8em}\sum_{\gamma\in \Gamma: Z(\gamma)\in \ell} \DT(\gamma) \cdot x_\gamma.\]
We call the ray active if this sum is nonzero. In general there will be countably many active rays.
Naively, we would like to view  $\DT(\ell)$  as a well-defined holomorphic function on the twisted torus $\bT_-$, and consider the associated time 1 Hamiltonian flow as a Poisson automorphism
\[\stokes(\ell)\in \Aut(\bT_-).\]
We refer to the resulting automorphism $\bS(\ell)$  as the \emph{BPS automorphism} associated to the ray $\ell$. In \cite{RHDT} we considered several ways of making  sense of $\stokes(\ell)$  which we  briefly summarise here:

\begin{itemize}
\item[(i)] {\it Formal approach.}  If we are only interested in the elements $\stokes(\ell)$ for rays $\ell\subset \bC^*$ lying in a fixed  acute sector $\Delta\subset \bC^*$, then we can work with a variant of the algebra $\bC[\bT_-]$ consisting of formal sums of the form \[\big.\sum_{Z(\gamma)\in \Delta} a_\gamma\cdot x_\gamma,\quad a_\gamma\in \bC,\] such that for any $H>0$ there are only finitely many terms with $|Z(\gamma)|<H$. See \cite[Appendix B]{RHDT}. It has the advantage of not requiring any extra assumptions, and gives a rigorous definition of variations of BPS structures (see below).\smallskip

\item[(ii)] {\it Analytic approach.} Let us suppose that the BPS structure is convergent. In \cite[Appendix B]{RHDT},   we  associate to each convex sector $\Delta\subset \bC^*$, and each real number $R>0$, a non-empty analytic open subset $U_\Delta(R)\subset \bT_-$ defined to be the interior of the subset 
\[\big\{g\in \bT_-: Z(\gamma)\in \Delta \text{ and } \Omega(\gamma)\neq 0\implies |g(\gamma)|<\exp(-R|Z(\gamma)|)\big\}\subset \bT_-.\]
 We then show that  for sufficiently large $R>0$, and any  active ray $\ell\subset \Delta$, the formal series $\DT(\ell)$ is absolutely convergent on   $U_\Delta(R)\subset \bT_-$, and that the time 1 Hamiltonian flow of the resulting  function defines a holomorphic  embedding \[\stokes(\ell) \colon U_\Delta(R)\rightarrow \bT_-.\]
 We can then view this map as being a partially-defined automorphism of $\bT_-$.\smallskip
 
 \item[(iii)] {\it Birational approach.} In  the case of a generic, integral and ray-finite  BPS structure,  the partially-defined automorphisms $\stokes(\ell)$  discussed in (ii) extend to  birational transformations of $\bT_-$ whose  pullback  of twisted characters is expressed by the formula
\begin{equation}\label{bir}\stokes(\ell)^*(x_\beta)=x_\beta\cdot \prod_{Z(\gamma)\in \ell}(1-x_\gamma)^{\,\Omega(\gamma)\<\gamma,\beta\>}.\end{equation}
\end{itemize}

In fact these different approaches will not play a very important role in this paper, and for heuristic purposes we shall continue to regard the elements $\stokes(\ell)$ as being genuine automorphisms of $\bT_-$. It is worth noting though that approaches (ii) and (iii) involve applying the Lie algebra homomorphism \eqref{rho}, and hence lose information when the form $\<-,-\>$ has a kernel. We will return to this point in Section \ref{sdouble}.

\subsection{Variations of BPS structure}
\label{vbpss}

The wall-crossing behaviour of DT invariants under changes in stability parameters is controlled by the Kontsevich-Soibelman wall-crossing formula. This forms the main ingredient in the following  definition of a variation of BPS structures, which is a special case of the  notion of a continuous family of stability structures  from \cite[Section 2.3]{KS}.
Full details can be found in \cite[Appendix B]{RHDT}: here we just give the rough idea.

\begin{defn}
\label{co}
A \emph{variation of BPS structures} over a complex manifold $M$ consists of a collection of BPS structures $(\Gamma_p,Z_p,\Omega_p)$ indexed by the points  $p\in M$, such that

\begin{itemize}
\item[(a)] the charge lattices  $\Gamma_p$ form a local system of abelian groups, and the intersection forms $\<-,-\>_p$ are covariantly  constant;\smallskip

\item[(b)] given a covariantly constant family of elements $\gamma_p\in \Gamma_p$, the central charges $Z_p(\gamma_p)\in \bC$ vary holomorphically;\smallskip

\item[(c)] the constant in the support property \eqref{support} can be chosen uniformly on compact subsets;\smallskip

\item[(d)] for each acute sector $\Delta\subset \bC^*$, the anti-clockwise product over active rays in $\Delta$\[ \stokes_p(\Delta)=\prod_{\ell\subset \Delta} \stokes_p(\ell) \in \Aut(\bT_{p,-}),\]
is covariantly constant as $p\in M$ varies, providing the boundary rays of $\Delta$ are never active.
\end{itemize}
\end{defn}

It requires some work to make rigorous sense of the wall-crossing formula, condition (d). This is done in \cite[Appendix B]{RHDT} following  the ideas of \cite{KS}. This needs no convergence assumptions and completely describes the behaviour of the BPS invariants as the point $p\in M$ varies: once one knows all the  invariants $\Omega(\gamma)$ at some point of $M$, they are determined at all other points.

\begin{remark}
There is a small discrepancy between Definition \ref{embryonic} of a variation of FS structures, and the above definition of a variation of BPS structures. Namely, the products of Stokes factors over rays are taken in different directions (clockwise or anti-clockwise) in the two cases. The explanation for this is that, for various reasons, it is convenient to take the underlying group $G$ in the case of BPS structures to be the \emph{opposite} of the group of automorphisms of $\bT_-$ (at least at the level of heuristics). Thus $G$ is related to the group of automorphisms of the ring of functions $\bC[\bT_-]$, and has a  Lie algebra given by the Poisson bracket on $\bC[\bT_-]$. Compare \cite[Remark 4.6 (i)]{RHDT}, and see Section \ref{indianrock} below for more details.
\end{remark}

Replacing $M$ by a contractible open subset or cover we can use trivialise the local system of charge lattices and obtain a holomorphic 
period map
\begin{equation}
\label{per}\varpi\colon M\to \Hom_\bZ(\Gamma,\bC) , \qquad p\mapsto Z_p.\end{equation}We shall say that a variation of BPS structures is \emph{miniversal} if the derivative of this map is everywhere non-vanishing, so that it is a local isomorphism.

\subsection{Quadratic refinements}
\label{qr}
In the definition of BPS automorphisms  we consider Hamiltonian vector fields on the twisted torus $\bT_-$ rather than the  torus $\bT_+$ itself. This is essential if the notion of a variation of BPS structures is to reproduce the wall-crossing behaviour of DT invariants. On the other hand, in the notion of a Joyce structure we consider vector fields on the family of tori $\bT_+$, rather than $\bT_-$, and this is also important, since for example, in Section \ref{linear} we will define the linear Joyce connection  by examining the  behaviour of these vector fields near the  identity of $\bT_+$.

To combine  these  requirements we use the identification \begin{equation}
\label{bloxy2}\rho_{\sigma}\colon \bT_+\to \bT_-, \qquad f\mapsto f\cdot \sigma\end{equation}
explained in Section \ref{twto}, corresponding to a particular choice of element $\sigma\in \bT_-$. Under this identification the twisted and untwisted characters are related  via $x_\gamma=\sigma(\gamma)\cdot y_\gamma$. If the symmetry property of the BPS invariants from Definition \ref{bpss} is to correspond to the oddness \eqref{oddd} of the Joyce function $J$, the map \eqref{bloxy2} needs to intertwine the inverse map on $\bT_+$ and the twisted inverse map on $\bT_-$, and as explained in Section \ref{twto},  this implies that  $\sigma$ must be chosen to lie in the finite subset of $\bT_-$ consisting of quadratic refinements of the form $\<-,-\>$.

Suppose given a variation of BPS structures over a manifold $M$. Since the lattices $\Gamma_p$ form a local system over $M$, and the form $\<-,-\>_p$ is covariantly constant, once a quadratic refinement is chosen at one point $p\in M$, there is at most one way to extend it in a covariantly constant way to the whole of $M$. But if the answer is to be well-defined we must choose a quadratic refinement which is preserved by parallel transport around loops in $M$, and in general this may only be possible after passing to a cover of $M$.  Conceptually we can say that we must choose a section of a $\{\pm 1\}$-torsor over $M$. It is interesting to note the analogy between this torsor and the one appearing  in Remark \ref{em}.


\section{Inverting the Stokes map}
\label{top}
In Section \ref{stwo}  we explained how a tame Frobenius structure on  a manifold $M$ gives rise to an isomonodromic family of meromorphic connections \eqref{conn} on a family of trivial vector bundles over $\bP^1$. Up to some minor technicalities, the data of this family of connections is enough to reconstruct the Frobenius structure. In Section \ref{sthree} we explained how the Stokes data of this family of connections  in turn defines what we called a variation of FS structures. In this section we begin by discussing the inverse problem of reconstructing the family of connections \eqref{conn} from a variation of FS structures. We  then move on to the analogous but more difficult problem of reconstructing a Joyce structure from a variation of BPS structures.  

\subsection{Properties of the Stokes map}

Let us again consider  connections  of the form
\begin{equation}
\label{ozfire}
\nabla=d-\bigg(\frac{U}{\hbar^2}+\frac{V}{\hbar}\bigg)d\hbar,
\end{equation}
as studied in Section \ref{sthree}. The Lie algebra relevant to this context is $\fg=\gl_n(\bC)$. Recall from Section \ref{stomap} the definition of the Stokes map
\begin{equation}
\label{stok}\cS\colon \god\to\god, \qquad \cS(V)=R,\end{equation}
which for fixed $U\in \fh^{\reg}$ sends the connection
\eqref{ozfire} to its Stokes data
\begin{equation*}
R=
\sum_{\alpha\in \Phi}D(\alpha)\cdot E_\alpha\in\god\end{equation*}
at the singularity $\hbar=0$.
To understand the properties of this map it is useful to compare it to  a closely-related map, namely the irregular RH map. 

Rather than studying connections
of the form \eqref{ozfire}, one can consider instead meromorphic
connections on the trivial vector bundle over the unit
disc $\Delta\subset \bC$ of the form
\begin{equation}
\label{nabb}
\nabla=d-\bigg(\frac{U}{\hbar^2}+\frac{V(\hbar)}{\hbar}\bigg)d\hbar,
\end{equation}
where  $V\colon \Delta \to \god$ is an arbitrary  holomorphic function. Such connections are naturally considered up to the action of holomorphic gauge transformations $P\colon \Delta\to \GL_n(\bC)$ satisfying the condition $P(0)=1$.
One can define the Stokes data at $\hbar=0$ for  such connections in exactly the same way as before, and it is immediate from the definitions that gauge-equivalent connections have the same Stokes data.

For fixed $U\in\fh^{\reg}$, the irregular RH map sends the  gauge equivalence class of the connection \eqref{nabb}
to its Stokes data. The results of \cite{BJL} show that this map is  an isomorphism. In contrast,  although the Stokes map \eqref{stok} is a local isomorphism of complex manifolds, it is neither injective
nor surjective in general. The reason for this is that  not
every connection of the form \eqref{nabb} can be put into the constant coefficient
form \eqref{ozfire} by a gauge transformation, and on the other hand, distinct elements  $V$ in \eqref{ozfire} can lead to gauge equivalent connections.  For more details on this see,  for example, \cite[Remark 12]{Boalch} and the reference given there.

\subsection{Stokes map in the FS structure case}
\label{fSm}
Power series expansions  for the Stokes map $\cS$, and its inverse, in a neighbourhood of the origin in the vector space $\god$ were derived in \cite{VTL}. They depend on a fixed element $U\in \fh^{\reg}$. The expansion of $\cS^{-1}$  takes the form
\begin{equation}
\label{main2}
V_\alpha=
\sum_{n\geq 1}
\sum_{\stackrel{\alpha_i\in\Phi}{\alpha_1+\cdots+\alpha_n=\alpha}}
F_n(U(\alpha_1),\ldots, U(\alpha_n))\cdot\prod_{i=1}^n D(\alpha_i)\cdot 
E_{\alpha_1} \cdot E_{\alpha_2}\cdots E_{\alpha_n},
\end{equation}
where the piecewise-holomorphic functions $F_n\colon (\bC^*)^n\to \bC$  are     closely related to  multi-logarithms. Each term in the sum over $n$ on the right of \eqref{main2} is a finite sum of elements in the universal enveloping algebra $U(\fg)$, but  the symmetry properties of the functions $F_n$ ensure that  these  sums  in fact lie in  $\fg\subset U(\fg)$.

It was shown in \cite{VTL} that for sufficiently small $D(\alpha_i)\in \bC$ the sum over $n\geq 1$ in \eqref{main2} is absolutely convergent, and the resulting connection \eqref{ozfire} has
Stokes factors given by the relation \eqref{skegn2}.  In this case therefore, the sum \eqref{main2} successfully inverts the Stokes map, and therefore gives a potential method for associating a connection \eqref{conn} to a FS structure. But since, as we explained above, the Stokes map is not expected to be injective or surjective in general, one should not expect the series \eqref
{main2} to be convergent for arbitrary choices of $D(\alpha_i)\in \bC$. And since, for a given $U\in \fh^{\reg}$, nothing is known about how small the $D(\alpha_i)$ need to be to ensure that \eqref{main2} converges, this approach appears to be  only of interest in a formal setting. 

Note that when the sum \eqref{main2} is absolutely convergent, the resulting connections have constant Stokes data \eqref{skegn2} as the leading term $U\in\fh^{\reg}$ varies, and therefore the expressions $V_\alpha(U)$ must satisfy the differential equation \eqref{diffeq}. This is easily seen to be equivalent to the statement that the functions $F_n$  satisfy the differential equation
\begin{equation}
\label{rem}dF_n(u_1,\cdots, u_n)=\sum_{1\leq i < n} F_i(u_1,\cdots,u_i)\cdot F_{n-i} (u_{i+1},\cdots,u_n)\cdot d\log\left(\frac{u_{i+1}+\cdots+u_n}{u_1+\cdots+u_i}\right)\end{equation}
on the dense open subsets of $(\bC^*)^n$ on which they are holomorphic. 

\subsection{Riemann-Hilbert problem in the FS structure case}

Consider again the problem of reconstructing a connection of the form \eqref{ozfire} from its associated FS structure. Rather than working with the generally divergent series \eqref{main2}, we can try to 
construct the fundamental solutions $\Psi_r(\hbar)$ directly by their defining properties, and then obtain the associated connection $\nabla$ by differentiation.

Recall from Section \ref{canic} the definition of the open half-plane $\bH_r\subset \bC^*$ centered on a ray $r\subset \bC^*$. Consider an FS structure $(T,U,D)$, with associated Stokes factors $\stokes_\ell\in \GL(T)$ as defined in \eqref{skegn2}. The associated  RH problem is as follows.

\begin{problem}\label{FrobRH}For each non-Stokes ray $r\subset \bC^*$, find a holomorphic function
\[\Psi_r\colon \bH_r\to \GL(T)\]
satisfying the following conditions:
\begin{itemize}
\item[(i)] if $r_1$ and $r_2$ are the boundary rays, taken in clockwise order, of a convex sector $\Delta\subset \bC^*$ containing a single Stokes ray $\ell$, then
\[\Psi_{r_2} (\hbar)=\Psi_{r_1} (\hbar)\cdot \stokes_\ell,\]
for all $\hbar\in \bH_{r_-}\cap \bH_{r_+}$;\smallskip

\item[(ii)] as $\hbar\to 0$ in the half-plane $\bH_r$ we have $\Psi_r(\hbar) \cdot \exp(U/\hbar)\to \id$;
\smallskip

\item[(iii)]  as $\hbar\to \infty$ in the half-plane $\bH_r$ the norm $\|\Psi_r(\hbar)\|$ has at most polynomial growth.
\end{itemize}
\end{problem}

Given a connection of the form \eqref{ozfire}, the canonical solutions of Theorem \ref{bjl} satisfy condition (ii) by  their definition, and condition (i) is the definition of the Stokes factors.
Condition (iii) holds because the connection \eqref{ozfire} has a regular singularity at $\hbar=\infty$, which implies that any fundamental solution has moderate growth.

Given a solution to Problem \ref{FrobRH}  we can construct a  connection \eqref{conn} having the Stokes data \eqref{skegn2} by writing
\[V= \hbar \cdot \frac{d\Psi_r}{d\hbar}\cdot \Psi_r^{-1}-\frac{U}{\hbar},\]
which is independent of the choice of ray $r\subset \bC^*$ by property (i). As in Section \ref{fSm} however, since the Stokes map is not an isomorphism, we cannot expect existence of solutions to the Problem \ref{FrobRH} for arbitrary choices of FS structure. General results show that when the $D(\alpha)\in \bC$ are sufficiently small, so that the  Stokes matrices $\stokes(\ell)$ are close to the identity, Problem \ref{FrobRH} can be solved using an integral equation \cite[Proposition 3.13]{D1}. But again, for a given choice of $U\in\fh^{\reg}$, there is no explicit description of what `sufficiently small' means.

When Problem \ref{FrobRH} has a solution, it need not be unique, since we can obtain another one by  pre-multiplying by a holomorphic function $P\colon \bC\to \GL_n(T)$ satisfying $P(0)=1$ and such that $\|P(\hbar)\|$ has moderate growth as $\hbar\to \infty$. 
To eliminate this indeterminacy one can introduce a more complicated RH problem which also takes into account the existence of canonical solutions to the connection \eqref{ozfire} near $\hbar=\infty$. Relating these to the canonical solutions near $\hbar=0$ leads to new monodromy invariants called central connection matrices. The resulting RH problems are discussed in detail in \cite[Lecture 4]{D2}.

\subsection{Stokes map in the BPS structure case}

Let us now consider the, presumably much more difficult, problem of reconstructing a connection from its Stokes data in the case of the infinite-dimensional Lie algebra $\fg$ of Section \ref{joyce}. More precisely, we would like to be able to construct Joyce structures for which the associated connection \[\nabla=d-\bigg(\frac{Z}{\hbar^2}+\frac{H}{\hbar} \bigg)d\hbar,\]
 has Stokes data described by a given BPS structure. One possibility is to use the expansion \eqref{main2}. The analogous expression for the Lie algebra $\fg$  is \begin{equation}
\label{jojo}
H_\gamma=
\sum_{n\geq 1}
\sum_{\stackrel{\gamma_i\in\Gamma\setminus\{0\}}{\gamma_1+\cdots+\gamma_n=\gamma}}
 F_n(U(\gamma_1),\ldots, U(\gamma_n))\cdot (-1)^{n}\cdot \prod_{i=1}^{n} \DT(\gamma_i)\cdot 
x_{\gamma_1} *x_{\gamma_2}*\cdots * x_{\gamma_n}.
\end{equation}
As with \eqref{main2}, this expression is viewed as living in the universal enveloping algebra $U(\fg)$, but due to the symmetry properties of the functions $F_n$, it should actually output elements of the Lie algebra $\fg$. If the sum in \eqref{jojo} is absolutely convergent we can  define
\begin{equation}
\label{flu2}H= \sum_{\gamma\in \Gamma} H_\gamma \cdot e^{\theta(\gamma)}, \qquad  J=\sum_{\gamma\in \Gamma} \frac{H_\gamma \cdot e^{\theta(\gamma)}}{Z(\gamma)},\end{equation}
and so obtain the Joyce function. 

The formula \eqref{jojo}  was first written down by Joyce \cite{HolGen}, and was also investigated by Stoppa and collaborators  \cite{BS,FGS}. Given the discussion of Section \ref{fSm} however, we cannot expect  that the sum \eqref{jojo} is convergent in general, and in fact the  convergence problems are  even more acute in the setting of the infinite-dimensional Lie algebra $\fg$, since even for a fixed value of $n$ the sum in \eqref{jojo} is potentially infinite. 
One possiblity is to try to introduce formal parameters as in \cite{BS}, but since this depends on a non-canonical choice of positive cone $\Gamma^+\subset \Gamma$,  given a variation of BPS structure over a base $M$, the resulting quantities $H_\gamma$ will not in general be holomorphic functions on $M$.

\begin{remark}
A  remarkable feature of the paper \cite{HolGen} is that Joyce wrote down the expansion \eqref{jojo} without any input from the theory of irregular connections or Stokes data. His motivation was simply to write down a generating function involving the invariants $\DT(\alpha)$  which would give holomorphically varying quantities $H_\alpha$ on the space of stability conditions. Postulating the form \eqref{jojo}, and making some other natural assumptions, he was able to show that the functions $F_n$ were uniquely determined and satisfied the differential equation \eqref{rem}, and hence that the resulting quantities $H_\alpha$ satisfied the equation \eqref{diffeq}.
The observation that this equation describes  isomonodromic families of connections of the form \eqref{biggy}   led directly to the paper \cite{VTL}, which shows that Joyce's expansion can be interpreted as the formal inverse to the Taylor expansion of the Stokes map.
\end{remark}

\subsection{Riemann-Hilbert problem in the BPS structure case}
\label{rh_intro}

In this paper we will attempt to use the RH approach to find Joyce structures whose Stokes data is described by a given BPS structure. 
Let $(\Gamma, Z, \Omega)$ be a convergent BPS structure, and denote by $\bT_-$ the  twisted torus. The associated RH problem depends also on an element $\xi\in \bT_-$ which we call the constant term.  It involves meromorphic maps
$X_{\rr}\colon \bH_\rr\to \bT_-$ depending on a choice of non-active ray $r\subset \bC^*$. Composing   with the twisted characters of $\bT_-$ we can equivalently consider functions
\[X_{\rr,\gamma}\colon \bH_\rr\to \bC^*,\qquad X_{\rr,\gamma}(\hbar)=x_\gamma(X_\rr(\hbar)).\]
The RH problem reads as follows; we will explain the precise analogy with Problem \ref{FrobRH} in the next subsection.

\begin{problem}
\label{dtsect}
For each non-active ray $\rr\subset \bC^*$ we  seek a meromorphic function
\[X_\rr\colon \bH_\rr\to \bT_-\]
such that the following three conditions are satisfied:
\begin{itemize}

\item[(RH1)] if the non-active rays $\rr_1,\rr_2\subset \bC^*$ are the  boundary rays of a convex sector $\Delta\subset \bC^*$ taken in clockwise order, then
\[X_{\rr_2}(\hbar)= \stokes(\Delta)( X_{\rr_1}(\hbar)), \]
for all $\hbar\in \bH_{\rr_1}\cap \bH_{\rr_2}$ with  $ 0<|\hbar|\ll 1$;
\smallskip

\item[(RH2)] 
for each non-active ray $\rr\subset \bC^*$ and each class $\gamma\in \Gamma$ we have
\[\exp(Z(\gamma)/\hbar)\cdot X_{\rr,\gamma}(\hbar) \to \xi(\gamma)\]
as $\hbar\to 0$ in the half-plane $\bH_\rr$;\smallskip

\item[(RH3)]  for any class $\gamma\in \Gamma$ and any non-active ray $\rr\subset \bC^*$, there exists $k>0$ such that
\[|\hbar|^{-k} < |X_{\rr,\gamma}
(t)|<|\hbar|^k,\]
for  $\hbar\in \bH_\rr$ satisfying $|\hbar|\gg 0$.
\end{itemize}
\end{problem}

To make rigorous sense of the condition (RH1) we need to use the results of \cite{RHDT} which guarantee that the clockwise-product of Stokes factors
\[\ \stokes(\Delta)=\prod_{\ell\subset \Delta} \stokes(\ell)\]
 has a well-defined  meaning as a partially-defined automorphism of the twisted torus $\bT_-$. See the discussion following \cite[Problem 4.3]{RHDT} for more details.

\subsection{Riemann-Hilbert problem discussion}
\label{indianrock}

At first sight Problem \ref{dtsect} looks somewhat different to Problem \ref{FrobRH}. In this section we explain the precise analogy between the two problems. In this discussion we will ignore issues to do with lack of convergence  and pretend that the Stokes factors \[\stokes(\ell)=\exp\Big(-\hspace{-.8em}\sum_{\gamma\in \Gamma: Z(\gamma)\in \ell} \DT(\gamma) \cdot x_\gamma\Big)\]
are well-defined elements of the corresponding group $G$ of Poisson automorphisms of the algebra $\bC[\bT_-]$. The obvious analogue of Problem \ref{FrobRH} then involves maps
\begin{equation}
\label{wawa}\Psi_r\colon \bH_r\to G=\Aut_{\{-,-\}} (\bC[\bT_-] ).\end{equation}

The first point to note is that instead of the group $G$, it will be convenient to work with the opposite group $G^{\rm{op}}$, which we can view heuristically as the group  of Poisson automorphisms of the twisted torus $\bT_-$. There is an obvious anti-homomorphism of groups $G\to G^{\rm{op}}$ which is the identity on the underlying sets. Composing \eqref{wawa} with this gives a map
\begin{equation}
\label{wawa3}\Psi_r\colon \bH_r\to G^{\rm {op}} =\Aut_{\{-,-\}}(\bT_-),\end{equation}
which satisfies the analogous conditions to those of Problem \ref{FrobRH}, but with the order of all products reversed. In particular,  Stokes automorphisms will act on the left rather than the right.

A second issue is that, rather than considering the infinite-dimensional group $G^{\rm{op}}$, we prefer to fix an element $\xi\in \bT_-$,  and  consider the composite of \eqref{wawa3} with the evaluation map
\[\operatorname{ev}_\xi\colon G^{\rm {op}} =\Aut_{\{-,-\}}(\bT_-)\to \bT_-, \qquad \operatorname{ev}_\xi(\Psi)= \Psi(\xi).\]
The maps $X_r(\hbar)$ considered in Problem \ref{dtsect} are then given by
\begin{equation}
\label{kar}X_r(\hbar)=\Psi_r(\hbar)(\xi).\end{equation}

Finally, although we would initially expect the resulting maps \eqref{kar} to be holomorphic functions of $
\hbar\in \bH_r$, computations in examples  show that in general we must allow them to be meromorphic. This has the unfortunate effect of making the solutions of Problem \ref{dtsect} highly non-unique. It would be very interesting to understand whether there are some further natural  conditions which can be imposed on the  solutions to control this lack of uniqueness.

\begin{remark}
One important feature of the heuristic picture involving maps \eqref{wawa3} has been lost in the translation to the more concrete Problem \ref{dtsect}, namely that the automorphism  $\Psi_r(\hbar)$ should preserve the Poisson structure on the torus $\bT_-$. As we explain in the next section, if we choose a basis $(\gamma_1,\cdots,\gamma_n)$ for the lattice $\Gamma$ we can encode the solutions to Problem \ref{dtsect} in terms of functions \[x_j(z_i,\theta_i,\hbar)=\log X_{r,\gamma_j}(\hbar), \qquad Z(\gamma_i)=z_i, \qquad \xi(\gamma_i)=\sigma(\gamma_i)\cdot \exp(\theta_i),\]
for some quadratic refinement $\sigma\colon \Gamma\to \{\pm 1\}$ of the form $\<-,-\>$. The missing condition is then that for fixed  $z_i\in \bC$ and $\hbar\in \bC^*$, the map $\theta_i\to x_j$ should be Poisson, so that
\[\sum_{p,q} \eta_{pq} \cdot \frac{\partial x_i}{\partial \theta_p}\cdot \frac{\partial x_j}{\partial \theta_q}=\eta_{ij}.\]
Clearly, imposing this condition will reduce the indeterminacy in the solutions to Problem \ref{dtsect}  as functions of the constant term $\xi\in \bT_-$.
\end{remark}

\begin{remark}In what follows we will not have  much to say about the behaviour of the flat sections of the connection \eqref{biggy} near the point $\hbar=\infty$ where it has a logarithmic singularity. This seems to be an interesting point which deserves further study. The corresponding picture for  Frobenius manifolds  is explained in detail by Dubrovin \cite[Section 2]{D2}, and leads to  the full set of monodromy data for a Frobenius manifold: as well as the Stokes factors at $\hbar=0$, there is monodromy data at $\hbar=\infty$, and a central connection matrix which intertwines them. It would be interesting to understand  the analogue of this extra data in the context of Joyce structures.\end{remark}

\subsection{Relating BPS structures and  Joyce structures}
\label{recon}

In Section \ref{vfs} we explained that a tame Frobenius structure on a complex manifold $M$ determines a variation of FS structures on $M$. We similarly expect that a Joyce structure on $M$, possibly satisfying some additional assumptions, should determine a variation of  BPS structures over $M$, as defined in 
Section \ref{vbpss}. 
 To prove such a result one would need an  analogue in the context of Joyce structures of the fundamental existence result, Theorem \ref{bjl}, which gives canonical flat sections of the  connection \eqref{biggy} in suitable half-planes.
 
What we shall do instead in what follows is to start with a variation of BPS structures on a complex manifold $M$, and attempt to construct a  Joyce structure by solving the RH problem of Section \ref{rh_intro}. Given the lack of uniqueness for solutions discussed above, this will necessarily involve a certain amount of guesswork. A more rigorous mathematical result can be obtained by  reading the following calculations backwards, and viewing them as proving instances of the missing analogue of Theorem \ref{bjl}. In this approach we suppose given the Joyce structure, and  view the solutions to the RH problem as giving the required existence of canonical sections of the connection \eqref{biggy}.

Let us describe in more detail how to get from a miniversal variation of BPS structures $(\Gamma_p,Z_p,\Omega_p)$ on a complex manifold $M$  to a Joyce structure, assuming the existence of suitably varying solutions to Problem \ref{dtsect}. Note first that the miniversal condition ensures that the derivative of the period map \eqref{per} at a point $p\in M$ is an isomorphism
\begin{equation}\label{bodmach}(D\varpi)_p\colon \cT_p M \to \Hom_\bZ(\Gamma_p,\bC).\end{equation}
Using these identifications, the local system of lattices $\Gamma_p$ induces a flat, torsion-free connection on the bundle $\cT_M$, with a covariantly constant bundle of full rank sublattices $\Gamma_M\subset \cT_M^*$.  Moreover, the skew-symmetric form $\<-,-\>$ on the lattice $\Gamma_p$  induces a skew-symmetric form $\eta(-,-)$  on the bundle $\cT_M^*$. We define the Euler vector field $E\colon \O_M\to \cT_M$ by the requirement that $(D\varpi)_p(E_p)=Z_p$ at each point $p\in M$. 

It remains to define the  Joyce function $J$.
Let us choose a covariantly constant family of quadratic refinements  $\sigma_p\colon \Gamma_p\to \{\pm 1\}$ for all $p\in M$. As discussed in Section \ref{qr} this may only be possible after passing to a cover or an open subset of $M$.  The associated maps \eqref{bloxy2} then allow us to identify the two tori $\bT_{p,\pm}$ at each point $p\in M$.  
Suppose we can solve the associated RH problems for generic choices of constant terms $\xi_p\in \bT_{p,-}$. We can then view the solution as a collection of maps
\eqref{wawa} which are the analogues of the canonical solutions of Theorem \ref{bjl}, and which should  therefore, as in Remark \ref{shorts},   define sections of the deformed Joyce connection \eqref{strike}. Differentiating then gives a formula for the Joyce function. There is one confusing sign which arises from the fact that, as explained in Section \ref{indianrock}, we have chosen to work with the opposite group $G^{\rm{op}}$.

To carry this out in detail, let us work locally on $M$, and take a covariantly constant basis $(\gamma_1,\cdots,\gamma_n)$ for the lattices $\Gamma_p$. We can then identify all the lattices $\Gamma_p$, tori $\bT_{p,\pm}$, quadratic refinements $\sigma_p$, {etc.}, and hence drop the point $p\in M$ from the notation. 
Problem \ref{dtsect}  depends on a point $p\in M$ specified locally by the corresponding central charges $z_i=Z(\gamma_i)$, and a constant term $\xi\in \bT_-$ specified by writing $\xi(\gamma_i)=\sigma(\gamma_i)\cdot \exp(\theta_i)$. For a given choice of non-active ray $r$, the solution $X_{r}\colon \bH_r\to \bT_-$  is then encoded in the functions \begin{equation}
\label{sool}x_j(z_i,\theta_i,\hbar)=\log X_{r, \gamma_j}(\hbar).\end{equation}
Note that if we use the quadratic refinement $\sigma$ to view the solution $X_{r}\colon \bH_r\to \bT_-$ as taking values in the torus $\bT_+$, instead of the twisted torus $\bT_-$, the corresponding logarithmic solutions $y_j(z_i,\theta_i,\hbar)$ will differ from $x_j(z_i,\theta_i,\hbar)$ only by constant multiples of $\pi i$, which will vanish upon differentiation and hence play no role in the calculation.

Substituting the solution \eqref{sool} into the equation for the extended Joyce connection \eqref{matrix} gives
\begin{equation}
\label{master}\frac{\partial x_j}{\partial z_i}+\frac{1}{\hbar} \cdot \frac{\partial x_j}{\partial \theta_i}+\sum_{p,q} \eta_{pq}   \cdot \frac{\partial^2 J}{\partial \theta_i\, \partial \theta_p}\cdot \frac{\partial x_j}{\partial \theta_q}=0,\end{equation}
from which we can extract the second derivatives of the Joyce function as meromorphic functions of $z_i$ and $\theta_i$. 
Note that  $J$ precisely measures the failure of the solution of the RH problem to be a function of the combined variables $z_i-\hbar \theta_{i}$.

\subsection{Doubling construction}
\label{sdouble}

Let us return to the setting of Section \ref{la} and the definition of the Lie algebra $\fg$ associated to a lattice $\Gamma\isom \bZ^{\oplus n}$ equipped with an integral skew-symmetric form $\eta$. When the form $\eta$ is degenerate, the Lie algebra homomorphism $\rho\colon \fg\to \operatorname{vect}_{\eta}(\bT_+)$ of equation \eqref{rho} has a nonzero kernel. As explained in Remark \ref{andrea}, this means that the geometric incarnation of the deformed Joyce connection \eqref{strike} contains less information than the abstract $\fg$-valued version \eqref{shrike}. Similar remarks apply to a BPS structure $(\Gamma,Z,\Omega)$ with a degenerate intersection form: viewing  the wall-crossing automorphisms $\bS(\ell)$  as (partially-defined) automorphisms of the torus $\bT_-$, rather than as elements of the abstract group associated to the Lie algebra $\fg$, loses information. In the extreme case when $\eta=0$, the connection \eqref{strike} is almost trivial, every wall-crossing automorphisms $\bS(\ell)$ is the identity, and Problem \ref{dtsect}   is also uninteresting.

In principle these  issues can be dealt with by working directly with the abstract Lie algebra $\fg$ and the associated Lie group. But since these are really only the correct objects at a heuristic level, this would be difficult to make precise. Instead it is easier to  reduce to the case when $\eta$ is non-degenerate by applying a standard doubling construction \cite[Section 2.8]{RHDT}, \cite[Section 2.6]{KS} which we explain here. The reader can also refer to Section \ref{thatone} below, where an explicit example is worked out in detail. 

On the monodromy side, suppose given 
a BPS structure $(\Gamma, Z, \Omega)$. The \emph{doubled BPS structure} then takes the form
\[(\Gamma\oplus \Gamma^\dual, Z\oplus Z^\vee, \Omega),\]
where $\Gamma^\dual=\Hom_\bZ(\Gamma,\bZ)$ is the dual lattice. We equip the doubled lattice $=\Gamma\oplus \Gamma^\dual$ with the non-degenerate skew-symmetric form
\begin{equation}
\label{job}\big\<(\gamma_1,\lambda_1), (\gamma_2,\lambda_2)\big\>=\<\gamma_1,\gamma_2\>+\lambda_1(\gamma_2)-\lambda_2(\gamma_1).\end{equation}
The central charge is defined by
$Z(\gamma,\lambda)=Z(\gamma)+Z^\vee(\lambda),$ for some arbitrary group homomorphism $Z^\vee\colon \Gamma^\vee\to \bC$,
and the BPS invariants by
\[\Omega(\gamma,\lambda)=\begin{cases} \Omega(\gamma) &\text{ if } \lambda= 0, \\ 0&\text{ otherwise}.\end{cases}\]
Given a miniversal variation of BPS structures over a manifold $M$, the derivative of the period map \eqref{per} at a point $p\in M$ gives an identification between the tangent space $\cT_{M,p}$ and the vector space $\Gamma_p^{\vee}\tensor \bC$. We leave it to the reader to prove that the  doubled BPS structures then fit together to give a miniversal variation of BPS structures over the total space of the cotangent bundle $P=\cT^*_{M}$.

On the Joyce structure side, consider a Joyce structure on a complex manifold $M$ given by the data of Definition \ref{joycestructure}. The base of the doubled structure will again be the  complex manifold $P$ which is the total space of the cotangent bundle $\cT^*_M$. The projection $q\colon P\to M$ induces a projection $q_*\colon \cT_{P}\to \cT_M$, and the Joyce function on  $\cT_{P}$ is just the pullback of the Joyce function $J\colon \cT_M\to \bC$ along this map. The tangent space at a point $p\in P$ is \[\cT_p P=\cT_m M\oplus \cT_m^* M, \qquad m=q(p).\]
 We take the lattice $\Gamma_m\oplus \Gamma^*_m\subset \cT_p^* P$, equipped with the form \eqref{job},  and as  connection on $\cT_P$ the sum of the pullbacks along $q$ of the connections induced by $\nabla$ on the bundles $\cT_M$ and  $\cT_M^*$. The Euler vector field at a point $p\in P$ is the sum  $E_m\oplus V_p$, where $E_m$ is the Euler vector  in the fibre $\cT_{M,m}$, and $V_p$ is the 
tautological element of $\cT_m^* M$ defined by the point $p\in P=\cT_{M}^*$. Once again, we leave the reader to check that these definitions give rise to a Joyce structure on the total space  $P=\cT_M$.


\section{Linear data from Joyce structures}
\label{linear}

The  Joyce connection \eqref{nons} defines an Ehresmann connection on the tangent bundle of a complex manifold $M$ equipped with a Joyce structure. Joyce pointed out \cite[Section 4]{HolGen} that this induces a linear connection on the tangent bundle, which we shall call the linear Joyce connection. We explain the definition of this connection here, together with some closely-related tensors: the Joyce form, and an operator $V$. In the special case when both sides of the equation \eqref{fl} individually vanish we also define a holomorphic function  $\cF\colon M\to \bC$ which we call the prepotential. We finish by defining a notion of compatibility for Joyce and Frobenius structures with the same underlying complex manifold.

\subsection{Linear Joyce connection}
\label{prepot}

Consider a Joyce structure as in Section \ref{joyce}, and assume that the Joyce function $J$ extends to a holomorphic function in a neighbourhood of the zero-section of $\cT_M$. The assumption that $J$ is an odd function of the fibre co-ordinates means that for any vector field $X$ on $M$, the Hamiltonian vector field $\Ham_{m_X(J)}$ on $\cT_M$ vanishes along the zero-section. The derivative of  this vector field therefore defines an endomorphism of the normal bundle of the zero-section, which can be identified with the tangent bundle to $M$ via the map $m$ of Section \ref{geom}. The result is an endomorphism-valued 1-form on the tangent bundle of $M$
\[C_X(Y)= D\Ham_{m_X(J)}(Y)\]
which we can use to define  a linear connection on $\cT_M$
\[\nabla^{J}_X(Y)=\nabla_X(Y)-D\Ham_{m_X(J)}(Y).\]
We call this the \emph{linear Joyce connection}.
 Taking co-ordinates as in Section \ref{geom} we have
\begin{equation}
\label{st}\nabla^J_{\frac{\partial}{\partial z_i}}\Big(\frac{\partial}{\partial z_j}\Big)=  -\sum_{l,m}\eta_{lm}\cdot\frac{\partial^3 J}{\partial \theta_i \, \partial \theta_j \, \partial \theta_l}\Big|_{\theta=0} \cdot \frac{\partial}{\partial z_m}.\end{equation}

\begin{lemma}
\label{bla}
The  linear Joyce connection is flat and torsion-free, and preserves the skew-symmetric form $\eta$ on the cotangent bundle $\cT_M^*$.
\end{lemma}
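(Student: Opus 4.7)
The plan is to work in flat coordinates $(z_1,\ldots,z_n)$ as in Section~\ref{geom} and to compute the Christoffel symbols of $\nabla^J$ directly from \eqref{st}:
\[
\Gamma^m_{ij}=-\sum_{l}\eta_{lm}\cdot K_{ijl},\qquad K_{ijl}:=\frac{\partial^3 J}{\partial\theta_i\,\partial\theta_j\,\partial\theta_l}\bigg|_{\theta=0}.
\]
Two elementary consequences of the axioms will drive the whole argument. First, since $J$ is odd in $\theta$ by \eqref{oddd}, every \emph{second} $\theta$-derivative of $J$ vanishes at $\theta=0$, while the quantities $K_{ijl}$ are totally symmetric in all three indices. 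Second, the matrix $\eta_{ij}$ is constant and skew-symmetric.

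Torsion-freeness is then immediate from the symmetry of $K_{ijl}$ in $i,j$. For compatibility with $\eta$, I would regard $\eta$ dually as the constant bivector $\sum_{ij}\eta_{ij}\,\partial_{z_i}\otimes\partial_{z_j}$ on $M$ and compute
\[
(\nabla^J_{\partial_{z_k}}\eta)^{pq}=\sum_l\eta_{lq}\,\Gamma^p_{kl}+\sum_l\eta_{pl}\,\Gamma^q_{kl}.
\]
Substituting the formula for $\Gamma$ and relabelling one pair of dummy indices using the symmetry of $K$ reduces the expression to $-\sum_{l,s}\eta_{lq}\,K_{kls}\,(\eta_{sp}+\eta_{ps})$, which vanishes by skew-symmetry of $\eta$.

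The substantive claim is flatness, and here the PDE \eqref{fl} enters decisively. The plan is to apply the operator $\partial^2/\partial\theta_k\,\partial\theta_l$ to both sides of \eqref{fl} and then evaluate at $\theta=0$. On the left this simply yields $\partial_{z_j}K_{ikl}-\partial_{z_i}K_{jkl}$. On the right, Leibniz produces four terms, but the two in which a factor $\partial^2 J/\partial\theta_a\partial\theta_b$ is left undifferentiated vanish at $\theta=0$ by the oddness of $J$, leaving the clean identity
\begin{equation}
\label{pdether}
\partial_{z_j}K_{ikl}-\partial_{z_i}K_{jkl}=\sum_{p,q}\eta_{pq}\bigl(K_{lip}K_{kjq}+K_{kip}K_{ljq}\bigr).
\end{equation}
I would then substitute \eqref{pdether} into the standard coordinate expression
\[
R^a_{bcd}=\partial_{z_c}\Gamma^a_{db}-\partial_{z_d}\Gamma^a_{cb}+\sum_e\Gamma^a_{ce}\Gamma^e_{db}-\sum_e\Gamma^a_{de}\Gamma^e_{cb}
\]
for the curvature of $\nabla^J$: the first two terms become a quartic in $K$ via \eqref{pdether}, while the $\Gamma\Gamma$-terms yield a quartic in $K$ directly from the definition of $\Gamma$. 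The four resulting monomials organise into two pairs, one of which cancels via the symmetry $K_{lde}=K_{del}$, and the other via a single dummy-index relabelling combined with the skew-symmetry $\eta_{er}=-\eta_{re}$.

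The main obstacle is really only the bookkeeping in this final cancellation, not anything conceptual: the identity \eqref{pdether} is the $\theta=0$ shadow of the flat-Ehresmann integrability encoded in \eqref{fl}, and flatness of $\nabla^J$ then expresses that $\nabla^J$ is the linearisation along the zero section of this flat nonlinear structure.
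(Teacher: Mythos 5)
Your proposal is correct and follows essentially the same route as the paper: torsion-freeness is read off from the symmetry of the third derivatives, $\eta$-preservation is a direct computation using skew-symmetry of $\eta_{ij}$ (the paper phrases it via the induced connection on $\cT_M^*$ rather than via the bivector, but the computation is the same), and flatness is obtained by differentiating \eqref{fl} twice in the $\theta$-variables, evaluating at $\theta=0$, and using oddness of $J$ to kill the terms containing an undifferentiated second derivative. The final cancellation of the quartic terms in $K$ against the $\Gamma\Gamma$-terms works exactly as you indicate.
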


\begin{proof}
These properties can   be easily derived from the above geometric description of the linear Joyce connection, but we shall instead give a computational proof based on the formula  \eqref{st}. For flatness we must show that
\[0=\nabla^J_{\frac{\partial}{\partial z_i}}\circ \nabla^J_{\frac{\partial}{\partial z_j}}\left(\frac{\partial}{\partial z_k}\right)- \nabla^J_{\frac{\partial}{\partial z_j}}\circ \nabla^J_{\frac{\partial}{\partial z_i}}\left(\frac{\partial}{\partial z_k}\right)\]\[=\sum_{l,m}\eta_{lm}\cdot\bigg(-\frac{\partial^4 J}{\partial z_i\, \partial \theta_j \, \partial \theta_k \, \partial \theta_l}\Big|_{\theta=0} +\frac{\partial^4 J}{\partial z_j\, \partial \theta_i \, \partial \theta_k \, \partial \theta_l}\Big|_{\theta=0}\bigg) \cdot \frac{\partial}{\partial z_m} \]
\[+\sum_{l,m,p,q}\eta_{lm}\eta_{pq}\cdot \bigg(\frac{\partial^3 J}{\partial \theta_i \, \partial \theta_q \, \partial \theta_l}\Big|_{\theta=0} \cdot \frac{\partial^3 J}{\partial \theta_j \, \partial \theta_k \, \partial \theta_p}\Big|_{\theta=0}-\frac{\partial^3 J}{\partial \theta_j \, \partial \theta_q \, \partial \theta_l}\Big|_{\theta=0}\cdot \frac{\partial^3 J}{\partial \theta_i \, \partial \theta_k \, \partial \theta_p}\Big|_{\theta=0}\bigg)\cdot \frac{\partial}{\partial z_m}.\]
Since $J$ is an odd function of the fibre co-ordinates, this follows by differentiating \eqref{fl} with respect to $\theta_k$ and $\theta_l$, and setting all $\theta_i=0$.

The  linear Joyce connection induces a  connection on the cotangent bundle $\cT_M^*$ given by
\[\nabla^J_{\frac{\partial}{\partial z_i}}\left(dz_j\right)=\sum_{k,l} \eta_{lj} \cdot \frac{\partial^3 J}{\partial \theta_i \, \partial \theta_l \, \partial \theta_k}\Big|_{\theta=0}\cdot dz_k.\]
The preservation of the form $\eta$ is then a consequence of
\[\eta\Big( \nabla^J_{\frac{\partial}{\partial z_i}}\left(dz_j\right),d z_k\Big)=\sum_{l,m} \eta_{lj}\cdot \eta_{mk} \cdot \frac{\partial^3 J}{\partial \theta_i \, \partial \theta_l \, \partial \theta_m}\Big|_{\theta=0}=-\eta\Big( dz_j, \nabla^J_{\frac{\partial}{\partial z_i}}\left(dz_k\right)\Big).\]
The statement that the linear Joyce connection is torsion-free is immediate from  \eqref{st}. 
\end{proof}

Consider the special case when both sides of the equation \eqref{fl} vanish. This happens for example when the form $\eta=0$. Differentiating
the left-hand side  gives 
\[\frac{\partial^4 J}{\partial z_i\, \partial \theta_j \, \partial \theta_k \, \partial \theta_l}\Big|_{\theta=0} =\frac{\partial^4 J}{\partial z_j\, \partial \theta_i \, \partial \theta_k \, \partial \theta_l}\Big|_{\theta=0}. \]
This implies the existence of a locally-defined function $\cF\colon M\to \bC$, well-defined  up to the addition  of polynomials of degree $\leq 2$ in the variables $z_i$, satisfying
\[\frac{\partial^3 \cF}{\partial z_i\, \partial z_j \, \partial z_k }=\frac{\partial^3 J}{\partial \theta_i\, \partial \theta_j \, \partial \theta_k}\Big|_{\theta=0}.\]

In the notation of Section \ref{joyce}, the function $\cF$ satisfies the relations
\[X_1 X_2 X_3 (\cF)=m_{X_1} m_{X_2} m_{X_3}(J)|_{\theta=0},\]
where $X_1,X_2,X_3$ are arbitrary holomorphic vector fields on $M$.
We call a   locally-defined function $\cF\colon M\to \bC$ 
 satisfying these conditions a \emph{prepotential.} 

\subsection{Joyce form}

Consider again a Joyce structure on a complex manifold $M$ whose Joyce function extends holomorphically over the zero-section. Recall from \eqref{gola} the definition of the function $H=m_E(J)$. We can use this function, and the formula 
\begin{equation}
\label{joyceg}g\Big(\frac{\partial}{\partial z_j},\frac{\partial}{\partial z_k}\Big)= \frac{\partial^2 H}{\partial \theta_j \, \partial \theta_k}\Big|_{\theta=0}= \sum_i z_i \cdot \frac{\partial^3 J}{\partial \theta_i \partial \theta_j \partial \theta_k }\Big|_{\theta=0}\end{equation}
 to define a symmetric bilinear form  on the tangent bundle $\cT_M$ which we call  the \emph{Joyce form}.

\begin{lemma}
\label{blA2}
The Joyce form is preserved by the linear Joyce connection.
\end{lemma}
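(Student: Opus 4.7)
To check that $\nabla^J$ preserves $g$ it suffices, since the coordinate vector fields $\partial/\partial z_i$ commute and are $\nabla^J$-flat generators of $\cT_M$, to verify the Leibniz identity
\[
\frac{\partial}{\partial z_i}\, g\!\left(\tfrac{\partial}{\partial z_j},\tfrac{\partial}{\partial z_k}\right)=g\!\left(\nabla^J_{\partial/\partial z_i}\tfrac{\partial}{\partial z_j},\tfrac{\partial}{\partial z_k}\right)+g\!\left(\tfrac{\partial}{\partial z_j},\nabla^J_{\partial/\partial z_i}\tfrac{\partial}{\partial z_k}\right).
\]
Using the explicit formulas \eqref{st} and \eqref{joyceg}, and writing $J_{abc}$ for $\partial^3 J/\partial\theta_a\partial\theta_b\partial\theta_c\big|_{\theta=0}$ and $J_{z_a,bcd}$ for $\partial^4 J/\partial z_a\partial\theta_b\partial\theta_c\partial\theta_d\big|_{\theta=0}$, this reduces to the algebraic identity
\[
J_{ijk}+\sum_l z_l\, J_{z_i,\,ljk}\;=\;-\sum_{l,m,p}\eta_{lm}\,z_p\bigl(J_{ijl}\,J_{pmk}+J_{ikl}\,J_{pmj}\bigr).\qquad(\star)
\]

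The plan is to derive $(\star)$ from two ingredients. First I would differentiate the Joyce PDE \eqref{fl} with respect to $\theta_r$ and $\theta_s$ and then set $\theta=0$: the oddness axiom (J1) forces every derivative of $J$ at $\theta=0$ involving an even total number of $\theta$'s to vanish, so in the Leibniz expansion of the right-hand side only the two cross-terms survive, giving
\[
J_{z_j,irs}-J_{z_i,jrs}=\sum_{p,q}\eta_{pq}\bigl(J_{ipr}J_{jqs}+J_{ips}J_{jqr}\bigr).
\]
Second, the homogeneity condition $\cH_E(J)=-J$ of axiom (J4), differentiated three times in $\theta_a,\theta_b,\theta_c$ and evaluated at $\theta=0$, yields the Euler relation $\sum_p z_p\, J_{z_p,abc}=-J_{abc}$.

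Applying the first identity with the free index $j$ renamed to a dummy $p$ and $(r,s)$ renamed $(j,k)$, then multiplying through by $z_p$ and summing over $p$, the Euler relation converts $\sum_p z_p J_{z_p,ijk}$ on the left into $-J_{ijk}$, and rearrangement produces $(\star)$ on the nose.

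The whole argument is mechanical index-juggling, so the only real obstacle is combinatorial bookkeeping; the one place where something non-formal happens is the use of oddness to kill exactly the two outer Leibniz terms in the differentiated Joyce equation, which would otherwise introduce four-$\theta$-derivative tensors that have no counterpart on the left-hand side of $(\star)$. Nothing beyond axioms (J1), (J3) and (J4) is used.
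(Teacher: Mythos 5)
Your proof is correct and is essentially the paper's own argument with the steps performed in the opposite order: the paper first contracts the Joyce PDE \eqref{fl} with $z_j$ and uses homogeneity to get the relation $\partial H/\partial z_i = -\sum_{p,q}\eta_{pq}\,\partial^2 J/\partial\theta_i\partial\theta_p\cdot\partial H/\partial\theta_q$, and only then differentiates twice in $\theta$ and invokes oddness, whereas you differentiate first and contract afterwards. Both routes use exactly the ingredients (J1), (J3)/\eqref{fl} and (J4), and the identity $(\star)$ you reduce to is the correct coordinate form of the Leibniz condition.
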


\begin{proof}
We must check that
\[\frac{\partial}{\partial z_i}\cdot g \bigg(\frac{\partial}{\partial z_j}, \frac{\partial}{\partial z_k}\bigg)=g\bigg(\nabla^J_{\frac{\partial}{\partial z_i}}\Big(\frac{\partial}{\partial z_j}\Big), \frac{\partial}{\partial z_k}\bigg)+g\bigg(\frac{\partial}{\partial z_j}, \nabla^J_{\frac{\partial}{\partial z_i}}\Big(\frac{\partial}{\partial z_k}\Big)\bigg).\]
This boils down to the identity
\[
 \frac{\partial^3 H}{\partial z_i \partial \theta_j \partial \theta_k }\Big|_{\theta=0} =-\sum_{p,q} \eta_{pq} \bigg(  \frac{\partial^3 J}{\partial \theta_i \partial \theta_j \partial \theta_p }\Big|_{\theta=0}\cdot \frac{\partial^2 H}{\partial \theta_k \partial \theta_q }\Big|_{\theta=0}+\frac{\partial^3 J}{\partial \theta_i \partial \theta_k \partial \theta_p }\Big|_{\theta=0}\cdot  \frac{\partial^2 H}{\partial \theta_j\partial \theta_q  }\Big|_{\theta=0} \bigg).\]
To establish this, we sum the relation \eqref{fl}, and use the assumption \eqref{gola} that $J$ is homogeneous of weight $-1$ in the $z_i$ variables to obtain
\[\frac{\partial H}{\partial z_i} = -\sum_{p,q} \eta_{pq} \cdot \frac{\partial^2 J}{\partial \theta_i \partial \theta_p }\cdot \frac{\partial H}{\partial \theta_q  }.\]
The identity  then follows by differentiating  with respect to  $\theta_j$ and $\theta_k$, setting all fibre co-ordinates $\theta_p=0$, and using the fact that $J$ is an odd function of these co-ordinates.
\end{proof}

When the Joyce form $g$ is non-degenerate, it defines a holomorphic metric on $M$, and Lemmas \ref{bla} and \ref{blA2} then imply that the linear Joyce connection $\nabla^J$ is the corresponding  Levi-Civita connection, and that $g$  is a flat metric.

\subsection{Gradient of the Euler vector field}
\label{grad}
Let us continue with the set-up of the last two sections.  
The formula
\begin{equation}
\label{defv2}V(X)=\nabla^J_X(E)-X\end{equation}
defines an endomorphism  of the tangent bundle $\cT_M$. 
Note the similarity with the definition  \eqref{defv}. Explicitly we have
\begin{equation}\label{joycev}V\Big(\frac{\partial}{\partial z_i}\Big)= -\sum_{j,p,q} z_j \cdot \eta_{pq} \cdot \frac{\partial^3 J}{\partial \theta_i \partial \theta_j \partial \theta_p}\Big|_{\theta=0}\cdot \frac{\partial}{\partial z_q}= -\sum_{p,q} \eta_{pq}\cdot \frac{\partial^2 H}{\partial \theta_i \, \partial \theta_p}\Big|_{\theta=0}  \cdot \frac{\partial}{\partial z_q}.\end{equation}
The following result shows that this operator intertwines  the Joyce form $g$ with the skew-symmetric form $\eta$.

\begin{lemma}
\label{quit}
The operator $V$ is covariantly constant with respect to the linear Joyce connection, and skew-adjoint with respect to the Joyce form. There is a commutative diagram
\begin{equation}
\label{diagr}\xymatrix{ T \ar[rr]^{-V}\ar[dr]_{X\mapsto g(X,-)} &&T \\ &T^* \ar[ur]_{\xi\mapsto \eta(\xi,-)}}\end{equation}
\end{lemma}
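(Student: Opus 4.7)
The plan is to prove the three assertions in reverse of the order stated, since the commutative diagram \eqref{diagr} gives a clean conceptual formula $V = -\eta^\sharp \circ g^\flat$ from which both skew-adjointness and covariant constancy follow formally. Here $g^\flat \colon \cT_M \to \cT_M^*$ denotes the map $X\mapsto g(X,-)$, and $\eta^\sharp\colon \cT_M^*\to \cT_M$ the map $\xi\mapsto \eta(\xi,-)$, where the latter makes sense because $\eta$ is a $\cO_M$-bilinear pairing on $\cT_M^*$ and hence defines a bivector, i.e.\ an element of $\cT_M\otimes \cT_M$.

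First I would verify the commutative diagram by direct computation in flat co-ordinates. Abbreviating $g_{ij}=\partial^2 H/\partial\theta_i\partial\theta_j|_{\theta=0}$, the map $g^\flat$ sends $\partial/\partial z_i$ to $\sum_j g_{ij}\,dz_j$ by definition \eqref{joyceg} of the Joyce form, while $\eta^\sharp$ sends $dz_j$ to $\sum_k \eta_{jk}\,\partial/\partial z_k$ by the definition of $\eta_{jk}=\eta(dz_j,dz_k)$. Composing and comparing with the explicit formula \eqref{joycev} for $V$ then gives
\[
\eta^\sharp\circ g^\flat\Big(\frac{\partial}{\partial z_i}\Big)=\sum_{j,k} g_{ij}\eta_{jk}\,\frac{\partial}{\partial z_k}=-V\Big(\frac{\partial}{\partial z_i}\Big),
\]
establishing \eqref{diagr}.

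Next I would deduce the skew-adjointness of $V$ with respect to $g$ from the identity $V=-\eta^\sharp\circ g^\flat$ together with the symmetry of $g$ and the skew-symmetry of $\eta$. Explicitly, for tangent vectors $X,Y$ one has $g(V(X),Y)=-\eta(g^\flat(X),g^\flat(Y))$ by unwinding the definitions of $g^\flat$ and $\eta^\sharp$, and similarly $g(X,V(Y))=g(V(Y),X)=-\eta(g^\flat(Y),g^\flat(X))=\eta(g^\flat(X),g^\flat(Y))$, so the sum vanishes. (Alternatively one can check this directly by swapping summation indices in the coordinate expressions, using $\eta_{pq}=-\eta_{qp}$ and $g_{jq}=g_{qj}$.)

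Finally I would obtain covariant constancy of $V$ as a consequence of Lemmas \ref{bla} and \ref{blA2}. Lemma \ref{bla} says $\eta$ is preserved by $\nabla^J$, so the induced bundle map $\eta^\sharp\colon \cT_M^*\to \cT_M$ intertwines the connections $\nabla^J$ on source and target. Lemma \ref{blA2} says the same for $g^\flat\colon \cT_M\to \cT_M^*$. Therefore the composition $V=-\eta^\sharp\circ g^\flat$ is $\nabla^J$-parallel, i.e.\ $\nabla^J V=0$. Since this step is purely formal once the two preceding lemmas and the diagram are in hand, I do not expect any real obstacle; the only computational content lies in the single-line coordinate check of \eqref{diagr}, which is routine.
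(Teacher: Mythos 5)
Your proposal is correct and follows essentially the same route as the paper: the commutative diagram is verified by comparing the coordinate expressions \eqref{joyceg} and \eqref{joycev}, covariant constancy then follows immediately from Lemmas \ref{bla} and \ref{blA2}, and skew-adjointness amounts to the identity $g(V(X),Y)=-\eta(g^\flat(X),g^\flat(Y))$, which the paper writes out in coordinates as $-\sum_{p,q}\eta_{pq}\,H_{ip}H_{jq}$ and observes changes sign under $i\leftrightarrow j$. Your reordering of the three steps and the slightly more abstract phrasing of the skew-adjointness argument are cosmetic differences only.
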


\begin{proof}
The commutativity of the  diagram \eqref{diagr} follows by comparing the expressions \eqref{joyceg} and \eqref{joycev}. The fact that $V$ is covariantly constant with respect to the linear Joyce connection 
is then immediate from  Lemmas \ref{bla} and  \ref{blA2}. For skew-adjointness note that
\[g\bigg( V\Big(\frac{\partial}{\partial z_i}\Big), \frac{\partial}{\partial z_j}\bigg)=-\sum_{p,q} \eta_{pq} \cdot \frac{\partial^2 H}{\partial \theta_i \, \partial \theta_p}\Big|_{\theta=0}\cdot \frac{\partial^2 H}{\partial \theta_j \, \partial \theta_q}\Big|_{\theta=0},\]
which  clearly changes sign when $i$ and $j$ are exchanged.
\end{proof}

\begin{remark}
\label{lakes}
Suppose given a miniversal variation of BPS structures on a complex manifold $M$, and consider the conjectural construction of the associated Joyce structure from Section \ref{recon}. In particular recall that the derivative of the period map \eqref{bodmach} gives an identification between the tangent space $\cT_p M$ and the vector space $\Hom_{\bZ}(\Gamma_p,\bC)$, and that the skew-symmetric form $\<-,-\>_p$ on the lattice $\Gamma_p$ then induces a skew-symmetric form $\eta_p$ on the tangent space $\cT_p M$.
In terms of the Fourier expansion \eqref{flu2} we can formally write the linear Joyce connection and the Joyce form as
\[\nabla^J_X(Y)=\nabla_X(Y)-\sum_{\gamma\in \Gamma\setminus\{0\}}H(\gamma) \cdot  \frac{ X(\gamma) Y(\gamma)}{Z(\gamma)} \cdot  \<\gamma,-\>,\]
\[g(X,Y)=\sum_{\gamma\in \Gamma\setminus\{0\}} H(\gamma)\cdot X(\gamma)  Y(\gamma).\]
These expressions  were first written down by Joyce, who also gave formal proofs of Lemma \ref{bla} and Lemma \ref{blA2} in these terms. 
\end{remark}

\subsection{Compatibility of Joyce and Frobenius structures}
\label{compatible}

There is clear evidence of non-trivial links between Frobenius structures and spaces of stability conditions \cite{Ludmil, TB, BQS, IQ,IQ2}.   The linear data derived from  Joyce connections appears to be relevant to this story, although it is perhaps too early to be sure about this. In this section we discuss a compatibility relation for Frobenius and Joyce structures having the same underlying complex manifold, which seems to capture a non-trivial feature of several interesting examples.

Consider again a Joyce structure on a complex manifold $M$ whose Joyce function extends holomorphically over the zero-section. Suppose also that the Joyce form $g$ is non-degenerate. 
We can then define a  bilinear operation $\diamond\colon \cT_M\times \cT_M\to \cT_M$,  which we call the \emph{diamond product},   by writing
\begin{equation}
\label{specu}
g\bigg(\frac{\partial}{\partial z_i} \diamond \frac{\partial}{\partial z_j}, \frac{\partial}{\partial z_k}\bigg)=\frac{\partial^3 J}{\partial \theta_i \partial \theta_j \partial \theta_k }\Big|_{\theta=0}= g\bigg(\frac{\partial}{\partial z_i},\frac{\partial}{\partial z_j}\diamond\frac{\partial}{\partial z_k}\bigg).
\end{equation}
 We say that the Joyce structure satisfies the \emph{Frobenius condition} if the diamond product   is associative. This is not the case in general (see Section \ref{gengen} below), but it does seem to hold in a number of interesting examples arising from DT theory. Note that since
\[g(E\diamond X_1,X_2)=m_{X_1} m_{X_2} m_E (J)|_{\theta=0}= m_{X_1} m_{X_2}(H)=g(X_1,X_2),\]
the Euler vector field $E$ defines a unit  for the operation $\diamond$.
Thus when the Frobenius condition is satisfied, the pair $(g,\diamond)$ induces the structure of a  commutative and unital Frobenius algebra on each tangent space $\cT_{M,p}$. Note that in the special case when we can define a prepotential $\cF\colon M\to \bC$ we can write
\[g(X_1\diamond X_2,X_3)=X_1X_2X_3(\cF)=g(X_1, X_2\diamond X_3).\]

Let  us call a Frobenius structure \emph{discriminant-free} if the operator $U(X)=E*X$  is everywhere  invertible. Given such a Frobenius structure we can define  a \emph{twisted multiplication} $\diamond\colon \cT_M\times \cT_M\to \cT_M$  by writing
\begin{equation}
\label{dim}E*(X \diamond Y) = X*Y.\end{equation}
It follows immediately from this definition that $\diamond$ is commutative and associative, and that the Euler vector field defines a  unit.

\begin{defn}
\label{blog}
Suppose that a complex manifold $M$ is equipped with both a discriminant-free Frobenius structure, and a Joyce structure  whose Joyce function extends holomorphically over   the zero-section. We call the two structures \emph{compatible} if the   following conditions hold:
\begin{itemize}
\item[(i)] the Joyce form coincides with the metric of the Frobenius structure up to scale:
\[g(X,Y)_{Joyce}=\lambda\cdot  g(X,Y)_{Frob}, \qquad \lambda\in \bC^*;\]
\item[(ii)] the diamond multiplication of the Joyce structure coincides with the twisted multiplication of the Frobenius structure up to scale:\begin{equation}
\label{mu}X\diamond_{Joyce} Y = \mu\cdot X\diamond_{Frob} Y, \qquad \mu\in \bC^*.\end{equation}
\end{itemize}
\end{defn}

We shall see some examples where these conditions  hold below.

\begin{remark}An intriguing question is whether there exists some natural construction which, given  a tame Frobenius structure, produces   a compatible Joyce structure with the same underlying manifold. \end{remark} 

\subsection{Consequences of compatibility}The following result lists  some simple consequences of the compatibility relation of Definition \ref{blog}. Part (e) involves the \emph{odd periods} of a Frobenius manifold \cite[Definition  8]{D3}, which are defined to be the flat co-ordinates for the second structure connection of \cite[Section 9.2]{Hertling} with parameters $s=-\tfrac{1}{2}$ and $z=0$.

\begin{prop}
\label{coco}
Suppose given a discriminant-free Frobenius structure of conformal dimension $d$ on a complex manifold $M$, and a compatible Joyce structure. Then $d\neq 2$, and the following conditions hold:
\begin{itemize}
\item[(a)] the scaling factor $\mu$ in \eqref{mu} is  $\mu=(2-d)/2$, \smallskip
\item[(b)] the Euler vector fields of the Joyce and Frobenius  structures coincide up to scale:
\[E_{Joyce}= \mu^{-1}\cdot E_{Frob};\]
\item[(c)] the operator $V$ defined in Section \ref{grad} coincides up to scale with the operator $V$ of the Frobenius manifold defined by \eqref{defv}:
\[V_{Joyce}= \mu^{-1}\cdot V_{Frob};\]
\item[(d)] the linear Joyce connection coincides with the  Levi-Civita connection of the Frobenius structure;\smallskip
\item[(e)] the flat co-ordinates for the connection $\nabla$ of the Joyce structure are the odd periods of the Frobenius structure.
\end{itemize}
\end{prop}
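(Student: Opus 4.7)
The plan is to establish (d) first, deduce (b) independently, then use both to obtain (a) and (c) simultaneously, and finally derive (e) by explicit computation. For (d), the linear Joyce connection $\nabla^J$ is torsion-free (Lemma \ref{bla}) and preserves the Joyce form (Lemma \ref{blA2}). By compatibility condition (i), $g_{Joyce} = \lambda \cdot g_{Frob}$ for some nonzero $\lambda$, so $\nabla^J$ also preserves the Frobenius metric; the uniqueness of the Levi-Civita connection then forces $\nabla^J = \nabla^{LC}$.

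For (b), both Euler vector fields are units for their respective diamond products: $E_{Joyce}$ is a unit for $\diamond_{Joyce}$ by the computation immediately following \eqref{specu}, while $E_{Frob}$ is a unit for $\diamond_{Frob}$ by setting $X = E_{Frob}$ in \eqref{dim} and using invertibility of the operator $U(Z) = E_{Frob} * Z$. Compatibility condition (ii) then rewrites $Y = E_{Joyce} \diamond_{Joyce} Y = \mu \cdot (E_{Joyce} \diamond_{Frob} Y)$, so $\mu \cdot E_{Joyce}$ is a unit for $\diamond_{Frob}$, which by uniqueness yields $E_{Joyce} = \mu^{-1} E_{Frob}$.

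For (a) and (c), combining (d), (b) and the defining relations \eqref{defv}, \eqref{defv2} gives
\[V_{Joyce}(X) = \nabla^{LC}_X(\mu^{-1} E_{Frob}) - X = \mu^{-1} V_{Frob}(X) - \left(\tfrac{d-2}{2\mu} + 1\right) X.\]
Now Lemma \ref{quit} asserts that $V_{Joyce}$ is $g_{Joyce}$-skew-adjoint, hence $g_{Frob}$-skew-adjoint, while Remark \ref{swim} asserts the same for $V_{Frob}$. Since the identity operator is $g_{Frob}$-self-adjoint and $g_{Frob}$ is non-degenerate, the scalar coefficient $(d-2)/(2\mu) + 1$ must vanish, yielding $\mu = (2-d)/2$ and $V_{Joyce} = \mu^{-1} V_{Frob}$. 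The requirement $\mu \in \bC^*$ in Definition \ref{blog} then forces $d \neq 2$.

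For (e), the formula \eqref{st} combined with the commutative diagram of Lemma \ref{quit} (which identifies $\eta^{\sharp} \circ g^{\flat}_{Joyce}$ with $-V_{Joyce}$) yields the compact expression $\nabla^J_X Y - \nabla_X Y = V_{Joyce}(X \diamond_{Joyce} Y)$. Substituting the results of (a) and (c), together with the identity $X \diamond_{Frob} Y = U^{-1}(X * Y)$ coming from \eqref{dim}, produces
\[\nabla_X Y = \nabla^{LC}_X Y - V_{Frob}\bigl(U^{-1}(X * Y)\bigr).\]
Comparison with Dubrovin's second structure connection \cite[Lecture 2]{D2} at parameter values $s = -1/2$ and $z = 0$ identifies this as the connection whose flat coordinates are by definition the odd periods \cite[Definition 8]{D3}. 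The main obstacle will be verifying this final identification carefully against the conventions in the literature, where several sign and normalization choices for the second structure connection coexist; the earlier parts reduce to direct algebraic manipulation using the tools already developed in Section \ref{linear}.
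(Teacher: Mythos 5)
Your proof is correct, and parts (b), (d) and (e) follow essentially the same route as the paper: the unit argument for the Euler vector fields, the uniqueness of the torsion-free metric connection, and the rewriting $\nabla^J_X(Y)=\nabla_X(Y)+V_{Joyce}(X\diamond_{Joyce}Y)$ followed by comparison with Dubrovin's second structure connection at $s=-\tfrac12$, $z=0$. Where you genuinely diverge is in (a) and (c). The paper obtains $\mu=(2-d)/2$ by comparing the homogeneity relations $\mathcal{L}ie_{E_{Joyce}}(g_{Joyce})=2\,g_{Joyce}$ (which it extracts from the weight $-1$ homogeneity \eqref{homo} of the Joyce function, i.e.\ the degree $0$ homogeneity of $H$ in the $z_i$) and $\mathcal{L}ie_{E_{Frob}}(g_{Frob})=(2-d)\,g_{Frob}$, and then gets (c) afterwards by comparing \eqref{defv} with \eqref{defv2}. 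You instead combine \eqref{defv} and \eqref{defv2} first, so that $V_{Joyce}-\mu^{-1}V_{Frob}$ is an explicit scalar multiple of the identity, and then kill that scalar by observing that it is skew-adjoint for the non-degenerate form $g_{Frob}$ (Lemma \ref{quit} for $V_{Joyce}$, Remark \ref{swim} for $V_{Frob}$), which forces it to vanish; this delivers (a) and (c) in one stroke. Both arguments are sound. The paper's version isolates the value of $\mu$ as a direct consequence of axiom (J4)/(F4) alone, without invoking the $V$-operators at all, whereas yours trades that homogeneity input for the skew-adjointness statements already proved in Section \ref{linear}; a small aesthetic cost of your route is that (a) becomes logically downstream of (b) and (d), but there is no circularity. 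Your closing caveat about normalisation conventions for the second structure connection is the same issue the paper handles by citing \cite[Proposition 3.3]{D3} and \cite[Definition 9.3]{Hertling} directly.
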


\begin{proof}
Since the Euler vector fields $E_{Joyce}$ and $E_{Frob}$ are the identities for the  products $\diamond_{Joyce}$ and  $\diamond_{Frob}$ respectively, it follows that $E_{Joyce}=\mu^{-1}\cdot E_{Frob}$, where $\mu$ is the constant of \eqref{mu}. Consider the relations
\[\mathcal{L}ie_{E_{Joyce}}(g_{Joyce})=2g_{Joyce}, \qquad \mathcal{L}ie_{E_{Frob}}(g_{Frob})=(2-d) \cdot g_{Frob}.\]
The first follows from the   condition \eqref{homo}, which shows that the function $H$ is homogeneous of degree $0$ in the variables $z_i$,   and the second is the definition of the conformal dimension $d$. 
Comparing these equations shows that $\mu=(2-d)/2$, which proves parts (a) and (b).

Definition \ref{blog}(i) ensures that the Joyce form is non-degenerate, so that the linear Joyce connection $\nabla^J$ is the associated Levi-Civita connection. This gives part (d). Part (c) then follows by comparing \eqref{defv} and \eqref{defv2}.
 Finally, note that using the definition \eqref{specu} of the diamond product, together with the diagram \eqref{diagr}, we can rewrite \eqref{st} as
\[\nabla^{J}_X(Y)=\nabla_X(Y)+V_{Joyce}(X\diamond_{Joyce} Y).\]
Part (e) then follows from the observation that this expression coincides with the difference between the second structure connection and the Levi-Civita connection given in \cite[Proposition 3.3]{D3} or \cite[Definition 9.3]{Hertling}.\end{proof}

\begin{remark}Given a compatible pair as in Definition \ref{blog}, we can attempt to reconstruct the Frobenius structure from the Joyce structure. The main missing ingredient is the identity vector field $e\colon \O_M\to \cT_M$. It follows from the axioms of a Frobenius manifold that $e$  is an eigenvector of the operator $V_{Frob}$ with eigenvalue $-d/2$, and is flat for the Levi-Civita connection. If we are given $e$, the formula \eqref{dim} shows that $U(X)= X\diamond_{Frob} e$, and we can then reconstruct the product of the Frobenius structure by writing  $X* Y = U(X\diamond_{Frob} Y)$. For more details see the discussions in \cite[Proposition 3.5]{D1}, \cite[Lemma 3.3]{D2} and \cite[Section 4.3]{Hitchin}.
\end{remark}

\begin{remark}
\label{exotic}
It will be convenient to extend  Definition \ref{blog} to include slight variations on the notion of a Frobenius structure known as \emph{almost Frobenius structures} \cite{D3}. These structures have the same basic ingredients (i)--(iii) appearing in Definition \ref{frobenius}, and satisfy all the axioms except (F2). Replacing this axiom there is a different condition on a vector field $e\colon \O_M\to \cT_M$ which will play no role here. See  \cite[Definition  9]{D3} for details.
 Since  an almost Frobenius structure has a metric and a product, we can extend Definition \ref{blog} in the obvious way to include this case. In fact there are some special features of almost Frobenius manifolds which simplify the results of Proposition \ref{coco}. Firstly, the Euler vector field  is the unit of the multiplication, so the non-degeneracy condition is automatic, and the diamond multiplication \eqref{dim} coincides with the product $*$. Secondly, the gradient of the Euler vector field is a multiple of the identity \cite[Equation (3.23)]{D3}, so the operator $V$ defined by \eqref{defv} vanishes, and it follows that the odd periods  coincide with  the flat co-ordinates.
\end{remark}


\section{The finite uncoupled case}
\label{uncoupled}

In this section we study the RH problems of Section \ref{rh_intro} associated to BPS structures $(\Gamma,Z,\Omega)$ which are  finite, uncoupled and integral. Recall that this means that all BPS invariants $\Omega(\gamma)$ are integers, with only finitely many being nonzero, and
\[\Omega(\gamma_1)\neq 0, \ \Omega(\gamma_2)\neq 0 \implies \<\gamma_1,\gamma_2\>=0.\] These RH problems were first solved by Barbieri \cite{Barb}, after a special case was studied in \cite{RHDT}. We shall follow Barbieri's treatment closely in this section.

\subsection{The A$_1$ BPS structure}
\label{sma}

Let us begin by considering the following basic example of a finite, uncoupled, integral BPS structure, determined by an element $z\in \bC^*$:

\begin{itemize}
\item[(i)] the lattice $\Gamma=\bZ\cdot \gamma$ has rank one, and  thus necessarily $\<-,-\>=0$;
\smallskip
\item[(ii)] the central charge $Z\colon \Gamma\to \bC$ is determined by $Z(\gamma)=z\in \bC^*$;
\smallskip
\item[(iii)] the only non-vanishing BPS invariants are $\Omega(\pm\gamma)=1$. 
\end{itemize}

This family of BPS structures arises  naturally from  the DT theory of the A$_1$ quiver, consisting of   one vertex and no arrows. It is easily checked that they fit together to form a miniversal variation of BPS structures over the manifold $\bC^*$. The local system of lattices is the trivial one $\Gamma\times \bC^*$.

We will always take the unique quadratic refinement $\sigma\colon \Gamma\to \{\pm 1\}$ satisfying $\sigma(\gamma)=-1$, and use the corresponding isomorphism $\rho_{\sigma}$ of \eqref{bloxy2} to relate the two tori $\bT_{\pm}$. Explicitly this means that given a point $\xi\in \bT_-$ we write
\begin{equation}
\label{mar}x_\gamma(\xi)=\exp(\vartheta),\qquad y_\gamma(\rho_{\sigma}^{-1}( \xi))=\exp(\theta), \qquad \theta=\vartheta+\pi i ,\end{equation}
where $y_\gamma\colon \bT_+\to \bC^*$ (respectively $x_\gamma\colon \bT_-\to \bC^*$) denotes the  character (respectively twisted character) corresponding to the element $\gamma\in \Gamma$.

We would like to construct a Joyce function whose Stokes data is given by this variation of BPS structures. As explained in Section \ref{recon}, most features of this structure are automatic. The underlying manifold is $\bC^*$, with the flat connection $\nabla$ on the tangent bundle $\cT_M$ being the one in which the standard co-ordinate $z\in \bC^*$ is flat. The bundle of lattices $\Gamma_M\subset \cT_M^*$ is spanned  by $dz$, and the form $\eta=0$.  The Euler vector field is $E=z\cdot \frac{\partial}{\partial z}$. What remains is to construct the Joyce function $J$ on the space  $\cT_M$ as a function of the co-ordinates $(z,\theta)$.

Consider the formula \eqref{jojo} in this case.
Since the Lie algebra $\god$ is abelian, and the expression on the right is a sum of commutators,  all terms with $n>1$  cancel. Consulting \cite{VTL} we find that
\[H(\gamma)=-F_1(Z(\gamma))\cdot \DT(\gamma)=\frac{-1}{2\pi i}\cdot \DT(\gamma),\]
for all $\gamma\in \Gamma$. We therefore obtain from the expression \eqref{flu}, and the formula \eqref{bps} for the DT invariants 
\begin{equation}
\label{pass}H(z,\theta)=\frac{-1}{2\pi i} \cdot \sum_{n\geq 1}   
\frac{1}{n^2}\left(e^{n \vartheta}+ e^{-n \vartheta}\right)=\frac{1}{2\pi i}\cdot \left(\frac{\theta^2}{2}+\frac{\pi^2}{6}\right),\end{equation}
where we used the Fourier series identity
\begin{equation*}
\sum_{n\geq 1}\frac{\cos(nt)}{n^2}=\frac{(t+\pi)^2}{4}-\frac{\pi^2}{12},\end{equation*}
valid for $-\pi<\Re(t+\pi)<\pi$.
The defining relation \eqref{gola} for the function $H$ then gives the Joyce function
\[J(z,\theta)=\frac{1}{2\pi i}\cdot \frac{\theta^3}{6z},\]
where we neglect functions linear in $\theta$.

Note that we made a slightly arbitrary choice  to sum the series \eqref{pass} under the assumption $-\pi<\Im(\theta)<\pi$. If we choose a different interval we will change the Joyce function by a  function quadratic in $\theta$. But the choice we made is in a sense the right one, since it results in a  Joyce function which is an odd function of  $\theta$.

\subsection{Riemann-Hilbert problem in the doubled A$_1$ case}
\label{thatone}
Let us now attempt to calculate the Joyce function for the same BPS structures using the RH approach.
In fact the RH problems associated to the A$_1$ BPS structures are trivial since the form $\<-,-\>$ vanishes and so all wall-crossing automorphisms are the identity.  Following the discussion in Section \ref{sdouble} we instead consider the RH problems associated to the doubles  of these structures \begin{equation}\label{cheedale}(\Gamma_D,Z\oplus Z^\vee,\Omega), \quad \Gamma_D=\Gamma\oplus\Gamma^\dual.\end{equation}
Let $\gamma^\dual\in \Gamma^\dual$ be the unique generator satisfying $\gamma^\dual(\gamma)=1$. Note that  the definition \eqref{job} of the symplectic form $\<-,-\>$ on the doubled lattice $\Gamma_D$ implies that $\<\gamma^\dual,\gamma\>=1$.  We set $z^\vee=Z^\vee(\gamma^\vee)\in \bC$. We also extend the quadratic refinement $\sigma\colon \Gamma\to \{\pm 1\}$ of Section \ref{sma} to the doubled lattice  $\Gamma_D$ by setting $\sigma(\gamma^\vee)=1$.

To define the RH problem for the doubled BPS structure we must first choose a constant term $\xi_D\in \bT_{D,-}$, where $\bT_{D,-}$ is the twisted torus defined by the lattice $\Gamma_D$ and the form $\<-,-\>$.
We set
\[\xi_D(\gamma)=\exp(\vartheta), \qquad \xi_D(\gamma^\vee)=\exp(\vartheta^\vee).\]
The only active rays  are $\ell_\pm=\pm \bR_{>0}\cdot z$.
 Since the functions $X_{\rr,\gamma}(\hbar)$ have no discontinuities it is natural to take   
\begin{equation}\label{cali}X_{\rr,\gamma}(\hbar)=X(\hbar)=\exp\Big(\vartheta-\frac{z}{\hbar}\Big),\end{equation}
for any non-active ray $\rr\subset \bC^*$. If we were looking for holomorphic solutions to the RH problem this would be the only possibility \cite[Lemma  4.8]{RHDT}, but in the meromorphic context   there are of course other choices. This is discussed further in  Section \ref{unique} below. Let us also write
\[X_{\rr,\gamma^\vee}(\hbar)=\exp\Big(\vartheta^\dual-\frac{z^\dual}{\hbar}\Big)\cdot R_{\rr}(\hbar).\]

The functions $R_{\rr}(\hbar)$ for non-active rays $\rr\subset \bC^*$ lying in the same component of $\bC^*\setminus (\ell_-\cup \ell_+)$ are necessarily analytic continuations of each other. Thus we obtain just two meromorphic functions
$R_\pm$,
 corresponding to the non-active rays lying in the half-planes $\pm \Im(z/\hbar)>0$. 
The RH problems for the doubled BPS structures can then be restated as follows.

\begin{problem}
\label{eps}
 For fixed $z\in \bC^*$ and $\vartheta\in \bC$, find meromorphic functions \[R_\pm\colon \bC^*\setminus i\ell_\pm \to \bC\]
 with the following properties:
\begin{itemize}
\item[(i)]
there are relations
\begin{equation}
\label{rabbit}R_+(\hbar)=\begin{cases} R_-(\hbar) \cdot \left(1-X(\hbar)\right)^{-1}&\mbox{ if }\hbar\in \bH_{\ell_+},\\ R_-(\hbar) \cdot \left(1-X(\hbar)^{-1}\right)^{-1}&\mbox{ if }\hbar\in \bH_{\ell_-},\end{cases}\end{equation}
where $x(\hbar)=\exp(\vartheta-z/\hbar)$ is as in \eqref{cali};\smallskip

\item[(ii)] as $\hbar\to 0$ in a closed subsector of $\bC^*\setminus i\ell_{\pm}$ we have $R_{\pm}(\hbar)\to 1$;\smallskip

\item[(iii)] there exists $k>0$ such that
\[|\hbar|^{-k} < |R_{\pm}(\hbar)|<|\hbar|^k\]
as $\hbar\to \infty$ in a closed subsector of $\bC^*\setminus i\ell_{\pm}$.
\end{itemize}
\end{problem}

To understand condition (i) note that if $\hbar\in \bH_{\ell_+}$, then $\hbar$ lies in the domains of definition $\bH_{\rr_i}$ of the functions $X_{\rr_i,\gamma^\vee}$ corresponding to sufficiently small deformations $\rr_i$ of the ray $\ell_+$. Thus \eqref{bir} applies to the ray $\ell_+$ and we obtain the first of  the relations \eqref{rabbit}. The second follows similarly from \eqref{bir} applied to the opposite ray $\ell_-$.

\subsection{Solution in the doubled A$_1$ case}

The formula
\[\Lambda(w,\eta)=\frac{e^{w} \cdot\Gamma(w+\eta)}{\sqrt{2\pi}\cdot w^{w+\eta-\half}}\]
defines a  meromorphic  function of $w\in \bC^*$ and $\eta\in \bC$, which is multi-valued due to the   factor
\begin{equation}
\label{har}w^{w+\eta-\half}=\exp({(w+\eta-\half)\log w}).\end{equation}
We specify it uniquely for $w\in \bC^*\setminus\bR_{<0}$ by taking the principal branch of $\log$.
Since $\Gamma(w)$ is meromorphic on $\bC$ with poles only at the non-positive integers, it follows that
$\Lambda(w,\eta)$ has poles only at the points $w+\eta\in \bZ_{\leq 0}$. 

The Stirling expansion \cite[Section 12.33]{WW} gives an asymptotic expansion
\begin{equation}\label{stirling}\log \Lambda(w,\eta)
\sim  \sum_{k=2}^{\infty} \frac{(-1)^{k}\cdot B_{k}(\eta)}{k(k-1)}w^{1-k},
\end{equation}
where $B_k(\eta)$ denotes the $k$th Bernoulli polynomial. This expansion is valid as $w\to \infty$ in the complement of any closed sector containing the ray  $\bR_{<0}$. It implies in particular that for fixed $\eta\in \bC$ we have $\Lambda(w,\eta)\to 1$.

\begin{prop}
\label{gs}
Problem \ref{eps} has a  solution, namely 
\[R_\pm (\hbar)= \Lambda\bigg( \frac{\pm z}{2\pi i\hbar},\frac{ \pi i \mp \theta}{2\pi i}\bigg)^{\pm 1},\]
where we set $\theta=\vartheta+\pi i $ as in \eqref{mar}.
\end{prop}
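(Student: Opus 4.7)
\medskip

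\noindent\textbf{Proof plan.} The strategy is to verify the three RH conditions directly from properties of the $\Gamma$-function. The one genuinely delicate issue is the careful tracking of branches of the multi-valued factor \eqref{har}, which is where the content of the jump condition resides.

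\emph{Step 1: rewrite the candidate solutions in uniform notation.}  Set $w = z/(2\pi i \hbar)$ and $\eta = (\pi i - \theta)/(2\pi i) = \tfrac12 - \theta/(2\pi i)$. Observe that $(\pi i + \theta)/(2\pi i) = 1-\eta$, so that
\[
R_+(\hbar) = \Lambda(w,\eta), \qquad R_-(\hbar) = \Lambda(-w,1-\eta)^{-1}.
\]
Then $R_\pm$ are meromorphic precisely on the complement of $i\ell_\pm$, since $w \in \bR_{<0}$ iff $\hbar \in i\ell_+$ (and similarly for $-w$ and $\ell_-$), and the pole locus of $\Gamma(w+\eta)$ corresponds to isolated points.

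\emph{Step 2: verify the jump condition using the reflection formula.} Forming the quotient and applying the identity $\Gamma(s)\Gamma(1-s) = \pi/\sin(\pi s)$ with $s = w+\eta$, the $e^{\pm w}$ factors and the $\sqrt{2\pi}$ cancel, giving
\[
\frac{R_+(\hbar)}{R_-(\hbar)} \;=\; \Lambda(w,\eta)\cdot\Lambda(-w,1-\eta) \;=\; \frac{1}{2\sin(\pi(w+\eta))} \cdot \left(\frac{-w}{w}\right)^{w+\eta-\tfrac12}.
\]
Here the quantity $(-w/w)^{w+\eta-1/2}$ is evaluated via principal logarithms of $w$ and $-w$. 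A short case check shows that on $\bH_{\ell_+}$ one has $\Im(w)<0$, so $\log(-w)-\log(w) = i\pi$, and the factor becomes $e^{i\pi(w+\eta-1/2)}$; on $\bH_{\ell_-}$ one has $\Im(w)>0$, giving $e^{-i\pi(w+\eta-1/2)}$. Substituting $\sin(\pi s) = (e^{i\pi s}-e^{-i\pi s})/(2i)$ and simplifying, the two cases collapse respectively to
\[
\frac{R_+}{R_-} = \frac{1}{1-e^{-2\pi i(w+\eta)}} \quad\text{on } \bH_{\ell_+}, \qquad \frac{R_+}{R_-} = \frac{1}{1-e^{+2\pi i(w+\eta)}} \quad\text{on } \bH_{\ell_-}.
\]
It remains to identify $e^{-2\pi i(w+\eta)}$ with $X(\hbar)$. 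Using $-2\pi i w = -z/\hbar$ and $-2\pi i\eta = -\pi i + \theta$, together with the conversion $\theta = \vartheta + \pi i$ of \eqref{mar}, one gets $e^{-2\pi i(w+\eta)} = e^{-z/\hbar}\cdot e^{\vartheta} = X(\hbar)$, and this is exactly the jump relation \eqref{rabbit}.

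\emph{Step 3: verify the asymptotic conditions from Stirling.} As $\hbar \to 0$ in a closed subsector of $\bC^*\setminus i\ell_\pm$, the corresponding variable $w$ (resp.\ $-w$) tends to infinity avoiding any closed sector around $\bR_{<0}$, so the Stirling expansion \eqref{stirling} implies $\log\Lambda \to 0$, whence $R_\pm(\hbar) \to 1$. This gives condition (ii). For condition (iii), as $\hbar \to \infty$ the variable $w\to 0$, so the gamma factor stays bounded away from its poles (for generic $\eta$), while the dominant contribution comes from $w^{w+\eta-1/2} = \exp((w+\eta-\tfrac12)\log w)$; since $|w\log w|$ is bounded as $w\to 0$, the whole expression has at most $|\hbar|^{-\Re\eta+1/2}$-type polynomial growth, which is of the required form.

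\emph{Main obstacle.} The only nontrivial point is Step 2, specifically the careful choice of branch of $(-w/w)^{w+\eta-1/2}$. This is where the two different jump factors $(1-X)^{-1}$ and $(1-X^{-1})^{-1}$ on the two rays emerge from the single reflection formula for $\Gamma$; everything else is a direct application of standard properties of the gamma function.
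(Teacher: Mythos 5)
Your proposal is correct and follows essentially the same route as the paper's proof: the Euler reflection formula (with a careful branch comparison of $\log w$ and $\log(-w)$ in the two half-planes) yields the jump relation, the Stirling expansion gives the asymptotics as $\hbar\to 0$, and the meromorphy of $\Gamma$ together with the boundedness of $w\log w$ near $w=0$ gives the polynomial bound as $\hbar\to\infty$. The only cosmetic difference is that you factor the product $\Lambda(w,\eta)\Lambda(-w,1-\eta)$ through $\tfrac{1}{2\sin(\pi(w+\eta))}\cdot(-w/w)^{w+\eta-1/2}$ before collapsing it, whereas the paper writes the combined exponential directly; the computation is the same.
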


\begin{proof}
Note that the function $R_+(\hbar)$ is well-defined and meromorphic on the domain $\bC^*\setminus i\ell_+$  as required, because if we set $w=z/2\pi i\hbar$ then \[\hbar\in i\ell_+ \iff w\in \bR_{<0}.\]
Similarly $R_-(\hbar)$ is well-defined and meromorphic on $\bC^*\setminus i\ell_-$.
The
Euler reflection formula  states that
\[\Gamma(x)\cdot \Gamma(1-x) =\frac{\pi}{\sin(\pi x)},\qquad x\in \bC\setminus\bZ.\]
Combined with the formula $\Gamma(n)=(n-1)!$ valid for $n\in \bZ_{>0}$, it follows that  $\Gamma(x)$ is nowhere vanishing. The same is therefore true of  the function $\Lambda(w,\eta)$.
The reflection formula also implies that for $w\in \bC^*\setminus \bR_{<0}$
\[\Lambda(w,\eta)\cdot \Lambda(-w,1-\eta)=\frac{1}{2\pi}\cdot  \frac{\pi }{\sin(\pi (w+\eta))}\cdot e^{-( (w+\eta-\half)\log(w)+(-w-\eta+\half)\log(-w))}.\]
The principal branch of $\log(w)$ on the domain $\bC^*\setminus \bR_{<0}$ satisfies \[\log(w)=\log(-w)\pm \pi i\text{ when }\pm \Im(w)>0,\]
so we conclude that when $\pm \Im(w)>0$
\begin{equation}
\label{luminy}\Lambda(w,\eta)\cdot \Lambda(-w,1-\eta)=\frac{i\cdot e^{\mp \pi i(w+\eta-\half)}} {e^{\pi i(w+\eta)}-e^{-i\pi (w+\eta)}}=(1-e^{\pm 2\pi i(w+\eta)})^{-1}.\end{equation}

Let us again set $w=z/2\pi i \hbar$. This implies that\[ \hbar\in \bH_{\ell_\mp}\iff \pm \Im(w)>0.\]
Writing $\eta=(\pi i-\theta)/2\pi i=-\vartheta/2\pi i$, and recalling \eqref{cali},   the relation \eqref{luminy} becomes
\[R_+(\hbar)\cdot R_-(\hbar)^{-1}=\big(1-X(\hbar)^{\mp 1}\big)^{-1},\]
when $\hbar\in \bH_{\ell_{\mp}}$, which is equivalent to the relations \eqref{rabbit}.

Condition (ii) in the Problem \ref{eps} is immediate from the Stirling expansion \eqref{stirling}. Condition (iii) is easily checked: for fixed $\eta$ the term \eqref{har}  clearly contributes at worst polynomial growth as $w\to 0$, and since   $\Gamma(w)$ is meromorphic at $w=0$, the same is true of $\Lambda(w,\eta)$.
\end{proof}

In the special case $\vartheta=0$ it follows from the identity $\Lambda(w,0)=\Lambda(w,1)$ that the solution of Proposition \ref{gs} reproduces the one given in \cite[Section 5]{RHDT}.

\subsection{Computing the Joyce function}
\label{cjf}

To compute the Joyce function  from the solution of Proposition \ref{gs} we use the formula
\begin{equation}
\label{master2}\frac{\partial x_j}{\partial z_i}+\frac{1}{\hbar} \cdot \frac{\partial x_j}{\partial \theta_i}+\sum_{p,q} \eta_{pq}   \cdot \frac{\partial^2 J}{\partial \theta_i\, \partial \theta_p}\cdot \frac{\partial x_j}{\partial \theta_q}=0\end{equation}
derived in Section \ref{recon}. 
Plugging the trivial part of the solution
\[x_{\pm,\gamma}(\hbar)=\log X_{\pm,\gamma}(\hbar)=\vartheta-\frac{z}{\hbar},\]
 into this equation immediately gives
\[\frac{\partial^2 J}{\partial \theta^\vee \partial \theta^\vee}= 0=\frac{\partial^2 J}{\partial \theta \partial \theta^\vee}.\]
Since we 
neglect linear terms in the variables $\theta$, $\theta^\vee$, we can assume that $J$ is independent of $\theta^\vee$.

Note that this fits perfectly with discussion of Section \ref{sdouble}: we should interpret the solution of the RH problem for  the doubled BPS structure   \eqref{cheedale} as describing the double of the Joyce structure corresponding to the original BPS structure $(\Gamma, Z, \Omega)$. It is therefore to be expected that the Joyce function is independent of the doubling variables $z^\vee$ and $\theta^\vee$.

The non-trivial part of our solution to the RH problem is
\[x_{\pm,\gamma^\vee}(\hbar)=\log X_{\pm,\gamma^\vee}(\hbar) =\vartheta^\vee-\frac{z^\vee}{\hbar}\pm \log \Lambda\Big( \frac{\pm z}{2\pi i\hbar},\frac{\pi i \mp \theta}{2\pi i}\Big).\]
Plugging this into \eqref{master2} gives
\[\frac{\partial^2 J}{\partial \theta^2}=\pm \Big(\frac{\partial }{\partial z} + \frac{1}{\hbar} \cdot \frac{\partial }{\partial \theta}\Big) \log \Lambda\Big( \frac{\pm z}{2\pi i  \hbar}, \frac{\pi i \mp\theta}{2\pi i}\Big)= \frac{1}{2\pi i \hbar} \Big( \frac{\partial }{\partial w}-\frac{\partial }{\partial \eta} \Big)\log \Lambda(w,\eta),\]
where we set $w=\pm z/2\pi i \hbar$ and $\eta=(\pi i\mp \theta)/2\pi i$. From the definition of $\Lambda(w,\eta)$ we obtain
\[\frac{\partial^2 J}{\partial \theta^2}=\frac{1}{2\pi i \hbar}\Big(1-\frac{w+\eta-\frac{1}{2}}{w}\Big)=\frac{\theta}{2\pi i z}.\]
Since the Joyce function is an odd function of $\theta$,  we conclude that up to linear terms in $\theta$
\begin{equation}
\label{windows}J(z,\theta)=\frac{\theta^3}{12\pi i z},\end{equation}
in agreement with what we found in Section \ref{sma}.

\subsection{Remarks on uniqueness}
\label{unique}
The solution of Proposition \ref{gs} is not unique with the stated properties.
More precisely, it is unique only up to multiplication of both $R_\pm(\hbar)$ by a meromorphic function $f\colon \bP^1\to \bP^1$ satisfying $f(0)=1$. We can further constrain the solution by insisting that the  zeroes and poles of $R_\pm(\hbar)$  lie only at the points
\begin{equation}
\label{pn}p_n=\frac{z}{\vartheta+2\pi i n},\qquad n\in \bZ,\end{equation}
where the factors appearing in \eqref{rabbit} have zeroes or poles.
The solution of Proposition \ref{gs} is then unique with this property up to multiplication of both $R_\pm(\hbar)$ by factors of the form
\[\bigg(1-\frac{ \hbar(\vartheta+2\pi i n)}{z}\bigg)^{\pm 1}.\]
Note that inserting such factors adds only quadratic functions in $\theta$ to the Joyce function, since
\[\bigg( \frac{\partial }{\partial z}+ \frac{1}{\hbar}\cdot \frac{\partial }{\partial \theta}\Big)\log \Big(1-\frac{ \hbar(\vartheta+2\pi i n)}{z}\Big)=-\frac{1}{z}.\]

The particular solution of Proposition \ref{gs} is distinguished by the property that $R_+(\hbar)$ is holomorphic and nonzero at the point $p_n$ for all $n> 0$. But note that the labelling of the $p_n$ depends on a choice of $\vartheta=\log \xi_D(\gamma)$, and hence cannot be chosen consistently as the constant term $\xi_D\in \bT_{D,-}$ varies.

\begin{remark}
\label{thth}
The  RH problem of  \cite[Section 5]{RHDT} asks for functions $R_\pm(\hbar)$  exactly as in Problem \ref{eps}, but which are holomorphic and everywhere non-vanishing  on their domains. It is easy to see  \cite[Lemma 4.9]{RHDT} that if a solution to this more constrained problem exists then it is unique.
But since  the factors appearing in \eqref{rabbit} have zeroes or poles at the points \eqref{pn}, such  a solution   is only possible if all the points \eqref{pn} lie on the rays $i \ell_{\pm}$, which is only the case if $\vartheta\in i\bR$. Given this,  it is easy to see that the unique holomorphic, non-vanishing solution is the one given by Proposition \ref{gs}, with $\theta$ chosen to satisfy $-\pi< \Im(\theta)\leq \pi$.
\end{remark}

\subsection{Compatibility with the A$_1$ Frobenius manifold}

Let us now consider the linear data of  Section \ref{linear} in the case of the   Joyce structure of Section \ref{sma}. 
Since the form $\eta=0$,
 the Joyce connection coincides with the standard connection $\nabla$, and hence  $z\in \bC^*$ is a flat co-ordinate for this connection. The gradient of the Euler vector field $E=z\frac{\partial}{\partial z}$ is then the identity, which implies that the operator $V$ vanishes. The Joyce form is
\[g\Big(\frac{\partial}{\partial z},\frac{\partial}{\partial z}\Big)=\frac{\partial^2 H}{\partial \theta^2}\Big|_{\theta=0}=\frac{1}{2\pi i},\]
which in particular is non-degenerate.
The diamond multiplication \eqref{specu} is then
\[\frac{\partial}{\partial z} \diamond \frac{\partial}{\partial z}= \frac{1}{z} \cdot \frac{\partial}{\partial z} .\]
A possible choice of  prepotential is
\[\cF(z)=\frac{1}{4\pi i}\cdot z^2 \log(z).\]

Let us compare these structures with the  trivial one-dimensional Frobenius manifold \cite[Example I.1.5]{D1}. This has flat co-ordinate $t\in \bC$ and is defined by the data
\[g\Big(\frac{\partial}{\partial t},\frac{\partial}{\partial t}\Big)=1, \qquad \frac{\partial}{\partial t}*\frac{\partial}{\partial t}=\frac{\partial}{\partial t}, \qquad E=t\cdot \frac{\partial}{\partial t}.\]
Since $\mathcal{L}ie_E(g)=2g$, the  conformal dimension  is $d=0$. The discriminant-free condition  holds for $t\in \bC^*$, and the corresponding twisted multiplication is
 \[\frac{\partial}{\partial t}\diamond \frac{\partial}{\partial t}=\frac{1}{t}\cdot \frac{\partial}{\partial t}.\]
 The conditions of Definition \ref{blog} clearly hold, so this Frobenius structure is compatible with the  Joyce structure of Section \ref{sma}. Note that  in this case $V=0$, which implies  that the odd periods  coincide with the flat co-ordinates. 
  
 It is also worth considering the almost dual of the above trivial Frobenius structure. The underlying manifold is $\bC^*$, and the basic ingredients  \cite[Example 1]{D3} are
 \[g\Big(\frac{\partial}{\partial t},\frac{\partial}{\partial t}\Big)=\frac{1}{t}, \qquad \frac{\partial}{\partial t}*\frac{\partial}{\partial t}=\frac{1}{t}\cdot \frac{\partial}{\partial t}, \qquad E=t\cdot \frac{\partial}{\partial t}.\]
 In terms of the flat co-ordinate $x=2\sqrt{t}$ this takes the form
  \[g\Big(\frac{\partial}{\partial x},\frac{\partial}{\partial x}\Big)=1, \qquad \frac{\partial}{\partial x}*\frac{\partial}{\partial x}=\frac{2}{x} \cdot \frac{\partial}{\partial x}, \qquad E=\frac{x}{2}\cdot \frac{\partial}{\partial x}.\]
  As explained in Remark \ref{exotic}, the Euler vector field is the identity for the multiplication in this almost dual structure, so the diamond product   coincides with  the ordinary product. It follows that, in the extended sense of Remark \ref{exotic}, this almost Frobenius structure is also compatible with the Joyce structure of Section \ref{sma}.
  
\subsection{The general finite uncoupled case}
\label{gengen}

Consider a miniversal variation of  finite, integral and uncoupled BPS structures $(\Gamma_p,Z_p,\Omega_p)$ over a complex manifold $M$. Let us attempt to follow the conjectural construction of the associated Joyce structure from Section \ref{recon}. Working locally, we can replace $M$ by a contractible   open subset, and use the local system in the definition  of a variation of BPS structures to identify all the lattices $\Gamma_p$ with a fixed lattice $\Gamma$.  We can then  specify a point of $M$ by the corresponding central charge $Z\colon \Gamma\to \bC$. The miniversal assumption ensures that the derivative of the period map \eqref{bodmach}
is an isomorphism. Thus  tangent vectors  at a point $p\in M$ can be identified with  homomorphisms $\theta\colon \Gamma\to \bC$.

The uncoupled condition ensures that the BPS automorphisms associated to different rays commute, which means that we can solve the associated RH problems by superposing solutions to the A$_1$ RH problem. This is explained in detail in \cite[Section 5.3]{RHDT}. There is no canonical choice of quadratic refinement, but this is unimportant since different choices give the same  Joyce function up to functions which are at most quadratic in the fibre co-ordinates $\theta_i$.  Adding copies of \eqref{windows}, and noting that the A$_1$ case has two nonzero BPS invariants $\Omega(\pm \gamma)$, it follows that the  Joyce function  in the general case can be taken to be
\begin{equation}\label{cubic}J(Z,\theta)=\frac{1}{24\pi i }\cdot\sum_{\gamma\in \Gamma\setminus\{0\}}  \Omega(\gamma)\cdot \frac{\theta(\gamma)^3}{Z(\gamma)},\end{equation}
up to linear terms in $\theta$.
This gives the  Joyce form
\[g(X_1,X_2)=\frac{1}{4\pi i}\cdot \sum_{\gamma\in \Gamma\setminus\{0\}} \Omega(\gamma)\cdot X_1(\gamma)X_2(\gamma),\]
which is therefore constant.
The linear Joyce connection is 
\[\nabla^J_{X}(Y)=\nabla_X(Y)-\frac{1}{4\pi i}\cdot \sum_{\gamma\in \Gamma\setminus\{0\}}  \Omega(\gamma)\cdot  \frac{ X(\gamma)Y(\gamma)}{Z(\gamma)} \cdot \<\gamma,-\>,\]
where $\nabla$ denotes the flat connection  resulting from identifying all tangent spaces $\cT_p M$  with the fixed vector space $\Hom_{\bZ}(\Gamma,\bC)$. 
A possible prepotential $\cF\colon M\to \bC$   is \[\cF(Z)=\frac{1}{8\pi i }\cdot \sum_{\gamma\in \Gamma\setminus\{0\}} \Omega(\gamma) \cdot Z(\gamma)^2 \log Z(\gamma).\]

When the Joyce form is non-degenerate the diamond product \eqref{specu} is defined by
\[g(X_1\diamond X_2,X_3)=\frac{1}{4\pi i}\cdot \sum_{\gamma\in \Gamma\setminus\{0\}}  \Omega(\gamma)\cdot \frac{X_1(\gamma)X_2(\gamma)X_3(\gamma)}{Z(\gamma)}=g(X_1, X_2\diamond X_3).\] 
Recall that the Joyce structure is said to satisfy the Frobenius condition if   this operation is associative. Since
\[ g(X_1\diamond X_2,X_3)=X_1 X_2 X_3 (\cF)=g(X_1, X_2\diamond X_3)\]
this is equivalent to the condition that the function $\cF$ satisfies the WDVV equations with respect to the metric $g$. In the special case when all $\Omega(\gamma)\in \{0,1\}$ this happens precisely when the set of active classes $\gamma\in \Gamma$ form a $\vee$-system in the sense of Veselov (see \cite[Theorem 1]{veselov}). It would be very interesting to understand whether there is some natural class of  CY$_3$ triangulated categories whose associated DT theory always leads to Joyce structures satisfying the Frobenius condition.

\section{The case of Calabi-Yau threefolds without compact divisors}
\label{snine}

In this section we consider the  BPS structures arising from DT theory applied to compactly-supported coherent sheaves on quasi-projective Calabi-Yau threefolds which contain no compact divisors. The relevant sheaves are then supported in dimension $\leq 1$, and the resulting DT invariants are conjecturally determined by the genus 0 Gopakumar-Vafa invariants. Assuming this conjecture we can use the methods of \cite{Con} to solve the associated RH problems, and explicitly describe the corresponding Joyce structures.

\subsection{Conifold case}

In the finite, uncoupled situation discussed in the previous section it turned out to be enough to understand a single basic example, namely the A$_1$ BPS structure, all other cases being obtained by superposing copies of this one. In the geometric set-up discussed in this section a similar role is played by the BPS structure defined by the DT theory of compactly-supported coherent sheaves on the resolved conifold. This non-compact Calabi-Yau threefold $X$ is the total space of the vector bundle $\O_{\bP^1}(-1)^{\oplus 2}$. Contracting the zero-section $C\subset X$ gives the threefold ordinary double point $(x_1x_2-x_3x_4=0)\subset \bC^4$, also known as the conifold singulatity. 

The variation of BPS structures  we shall consider lives on the complex manifold
\[M_+=\big\{(v,w)\in \bC^2: w\neq 0 \text{ and }\Im(v/w)>0\}\subset \bC^2.\]
In fact, this variation extends to a larger space, which has a natural interpretation as the quotient of the space of stability conditions on the  compactly-supported derived category of coherent sheaves of $X$ by a certain subgroup of the auto-equivalence group. This extended variation will not be important in what follows however, so we omit the details and instead refer the interested reader to \cite{Con}.

The BPS structure $(\Gamma_{\leq 1},Z_{\leq 1},\Omega)$ corresponding to a point $(v,w)\in M_+$ is determined  by the following data:

\begin{itemize}
\item[(i)]
the lattice $\Gamma_{\leq 1}=\bZ\beta\oplus \bZ\delta$ with the form $\<-,-\>=0$;\smallskip

\item[(ii)]the central charge $Z_{\leq 1}\colon \Gamma_{\leq 1}\to \bC$ defined by
\[Z_{\leq 1}(a\beta+b\delta)=av+bw;\]

\item[(iii)]
the non-zero BPS invariants
\[\Omega(\gamma)=\begin{cases} 1 &\text{if }\gamma=\pm \beta+n\delta \text{ with } n\in \bZ,\\ 
-2 &\text{if }\gamma=k\delta\text{ with }k\in \bZ\setminus\{0\}.\end{cases}\]
\end{itemize}

 In  terms of the DT theory of the resolved conifold $X$, the relevant sheaves are the line bundles $\O_C(n)$ supported on the unique compact curve $C\subset X$, and the sheaves supported in dimension zero. The lattice $\Gamma_{\leq 1}$ can be identified with the group $H_2(X,\bZ)\oplus H_0(X,\bZ)$ which is the natural receptacle for Chern characters of compactly-supported sheaves on $X$. The Chern character of $\O_C(n)$ then corresponds to $\beta+n\delta$, and that of a skyscraper sheaf $\O_x$ to $\delta$.

The above BPS structures  form a miniversal variation of uncoupled BPS structures over $M_+$. The BPS invariants are constant because  the form $\<-,-\>$ vanishes. Let us define the rays
\[\ell_\infty=\bR_{>0}\cdot w,\qquad \ell_n=\bR_{>0}\cdot (v+nw)\subset \bC^*.\]
Then the active rays for the  BPS structure  $(\Gamma_{\leq 1},Z_{\leq 1},\Omega)$  are precisely the rays $\pm \ell_\infty$ and $\pm \ell_n$ for $n\in \bZ$, and the associated ray diagram is illustrated in Figure \ref{fig}. We denote by $\Sigma(n)\subset \bC^*$  the convex sector with boundary rays $\ell_{n-1}$ and $\ell_n$. 

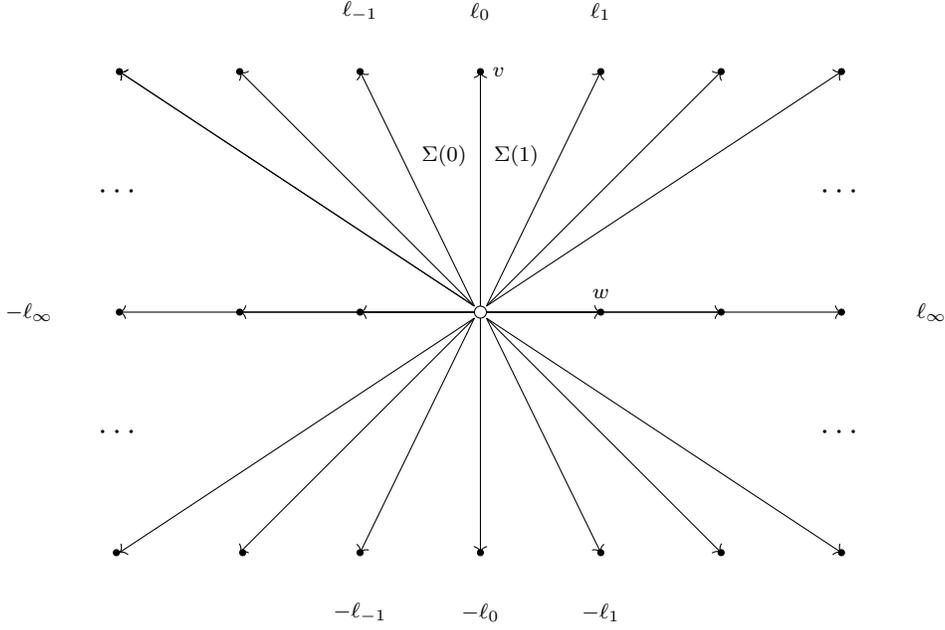
\begin{figure}
\begin{tikzpicture}[scale=0.8]
\draw (5,0) circle [radius=0.1];
\draw[->] (4.9,0.1) -- (-1,4);
\draw[->] (4.9,0.1) -- (1,4);
\draw[->] (4.9,0.1) -- (3,4);
\draw[->] (5,0.1) -- (5,4);
\draw[->] (5.1,0.1) -- (7,4);
\draw[->] (5.1,0.1) -- (9,4);
\draw[->] (5.1,0.1) -- (11,4);
\draw[->] (4.9,0.1) -- (-1,4);
\draw[fill] (-1,4) circle [radius=0.05];
\draw[fill] (1,4) circle [radius=0.05];
\draw[fill] (3,4) circle [radius=0.05];
\draw[fill] (5,4) circle [radius=0.05];
\draw[fill] (7,4) circle [radius=0.05];
\draw[fill] (9,4) circle [radius=0.05];
\draw[fill] (11,4) circle [radius=0.05];
\draw[->] (4.9,-0.1) -- (-1,-4);
\draw[->] (4.9,-0.1) -- (1,-4);
\draw[->] (4.9,-0.1) -- (3,-4);
\draw[->] (5,-0.1) -- (5,-4);
\draw[->] (5.1,-0.1) -- (7,-4);
\draw[->] (5.1,-0.1) -- (9,-4);
\draw[->] (5.1,-0.1) -- (11,-4);
\draw[fill] (-1.05,-4) circle [radius=0.05];
\draw[fill] (1.05,-4) circle [radius=0.05];
\draw[fill] (3,-4) circle [radius=0.05];
\draw[fill] (5,-4) circle [radius=0.05];
\draw[fill] (7,-4) circle [radius=0.05];
\draw[fill] (9,-4) circle [radius=0.05];
\draw[fill] (11,-4) circle [radius=0.05];
\draw[->] (4.9,0) -- (3,0); \draw[->] (4.9,0) -- (1,0); \draw[->] (4.9,0) -- (-1,0);
\draw[->] (5.1,0) -- (7,0); \draw[->] (5.1,0) -- (9,0); \draw[->] (5.1,0) -- (11,0);
\draw[fill] (3,0) circle [radius=0.05];
\draw[fill] (1,0) circle [radius=0.05];
\draw[fill] (-1,0) circle [radius=0.05];
\draw[fill] (7,0) circle [radius=0.05];
\draw[fill] (9,0) circle [radius=0.05];
\draw[fill] (11,0) circle [radius=0.05];
\draw (11,2) node { $\cdots$};
\draw (-1,2) node { $\cdots$};
\draw (11,-2) node { $\cdots$};
\draw (-1,-2) node { $\cdots$};
\draw (5,5) node {$\scriptstyle \ell_0$};
\draw (3,5) node {$\scriptstyle \ell_{-1}$};
\draw (7,5) node {$\scriptstyle \ell_{1}$};
\draw (-2.5,0) node {$\scriptstyle -\ell_{\infty}$};
\draw (12.5,0) node {$\scriptstyle \ell_{\infty}$};
\draw (5,-5) node {$\scriptstyle -\ell_0$};
\draw (3,-5) node {$\scriptstyle -\ell_{-1}$};
\draw (7,-5) node {$\scriptstyle -\ell_{1}$};
\draw (4.4,2.6) node {$\scriptstyle \Sigma(0)$};
\draw (5.6,2.6) node {$\scriptstyle \Sigma(1)$};
\draw (7,0.3) node {$\scriptstyle w$};
\draw (5.3,4) node {$\scriptstyle v$};
\end{tikzpicture}
\caption{The ray diagram associated to a point $(v,w)\in M_+$.
\label{fig}}
\end{figure}

As in Section \ref{uncoupled} we shall also consider the  doubles of these  BPS structures. We denote these by $(\Gamma,Z,\Omega)$ and use the notation
\[\Gamma=\Gamma_{\leq 1} \oplus \Gamma_{\geq 2}, \qquad \Gamma_{\geq 2}:=\Gamma_{\leq 1}^\vee=\Hom_\bZ(\Gamma_{\leq 1},\bZ).\]
We denote by $(\beta^\vee,\delta^\vee)\subset \Gamma_{\geq 2}$ the dual basis to $(\beta,\delta)\subset \Gamma_{\leq 1}$. 
The map $\Omega\colon \Gamma\to \bQ$ satisfies $\Omega(\gamma)=0$ unless $\gamma\in \Gamma_{\leq 1}$. The central charge takes the form \[Z=Z_{\leq 1}\oplus Z_{\geq 2}\colon \Gamma \to \bC,\] where the group homomorphism $Z_{\geq 2}\colon \Gamma_{\geq 2}\to \bC$
is arbitrary. We set
\[Z(\beta^\vee)=v^\vee, \qquad Z(\delta^\vee)=w^\vee.\]
In this way we obtain a  miniversal variation of  uncoupled  and integral BPS structures  over the cotangent bundle $\cT^*M_+$. These BPS structures are not finite, but are easily seen to be  convergent in the sense of Definition \ref{blahblah}.

The RH problem associated to the BPS structure $(Z,\Gamma,\Omega)$ depends on a point of the twisted torus $\bT_-$ associated to the doubled lattice $\Gamma$.  We set
\[\xi(\beta)=\exp(\vartheta), \qquad \xi(\delta)=\exp(\varphi), \qquad \xi(\beta^\vee)=\exp(\vartheta^\vee),\qquad \xi(\delta^\vee)=\exp(\varphi^\vee).\]
The solution consists of meromorphic functions
$X_{r,\gamma}\colon \bH_r \to \bC^*$
for all non-active rays $r\subset \bC^*$ and all classes $\gamma\in \Gamma$. The solutions corresponding to different non-active rays contained in the same sector $\Sigma(n)$ are analytic continuations of each other, and hence define a function $X_{n,\gamma}(\hbar)$ on the region 
\[\cV(n)=\bH_{\ell_{n-1}}\cup\bH_{\ell_n}\subset \bC^*.\]
We will not introduce notation for the corresponding functions  for non-active rays $r\subset \bC^*$  lying in the opposite sectors $-\Sigma(n)$ because we will impose the natural symmetry condition
 \begin{equation}
 \label{tor}X^{\sigma(\xi)}_{-r,-\gamma}(-\hbar)=X_{r,\gamma}^{\xi}(\hbar),\end{equation}
 where $\sigma\colon \bT_-\to \bT_-$ is the twisted inverse map of Section \ref{twto}.

As in Section \ref{thatone} it is natural to set
\begin{equation}
\label{trevor}X_{n,\beta}(\hbar)=\exp\left( \vartheta-\frac{v}{\hbar}\right), \qquad X_{n,\delta}(\hbar)=\exp\left( \varphi-\frac{w}{\hbar}\right),\end{equation}
for all integers $n\in \bZ$.
We then write the non-trivial part of the solution in the form
\begin{equation}
\label{nontrev}X_{n,\beta^\vee}(\hbar)=\exp\left( \vartheta^\vee-\frac{v^\vee}{\hbar}\right)\cdot B_n(\hbar), \qquad X_{n,\delta^\vee}(\hbar)=\exp\left( \varphi^\vee-\frac{w^\vee}{\hbar}\right)\cdot D_n(\hbar),\end{equation}
where we drop from the notation the dependence of the functions $B_n$ and $D_n$ on the parameters $(z,w)$ and $(\vartheta,\varphi)$.
Working out the conditions imposed on the functions $B_n$ and $D_n$ we obtain the following explicit version of the RH problem:

\begin{problem}
\label{tra}
 For each integer $n\in \bZ$ find meromorphic functions $B_n(\hbar)$ and $D_n(\hbar)$  defined on the region $\cV(n)$
with the following properties:
\begin{itemize}
\item[(i)] as $\hbar\to 0$ in any closed subsector of $\cV(n)$ one has \[ B_n(\hbar)\to  1, \qquad  D_n(\hbar)\to  1;\]
\item[(ii)] for each $n\in \bZ$ there exists $k>0$ such that for  any closed subsector of $ \cV(n)$
\[ |\hbar| ^{-k} <  |B_n(\hbar)|,  |D_n(\hbar)| < |\hbar|^k, \qquad  |\hbar| \gg 0;\]
\item[(iii)] on the intersection $\cH(n)=\cV(n)\cap \cV(n+1)$ there are relations
\[B_{n+1}(\hbar)=B_{n}(\hbar)\cdot (1-x q^{n} )^{-1}, \quad D_{n+1}(\hbar)=D_{n}(\hbar)\cdot (1-x q^{n})^{-n};\]

\item[(iv)] note that $\cV(0)\cap -\cV(0)=i \cdot \Sigma(0)\sqcup -i \cdot \Sigma(0)$; in the region $-i\cdot \Sigma(0)$  there are relations
\[B_0(\hbar)\cdot B_0(-\hbar)=\prod_{n\geq 0}\big(1-x q^{n}\big)\cdot \prod_{n\geq 1}\big(1-x^{-1} q^{n})^{-1},\]\begin{equation*} D_0(\hbar)\cdot D_0(-\hbar)=\prod_{n\geq 0}\big(1-xq^{n}\big)^{n}\cdot \prod_{n\geq 1}\big(1-x^{-1}  q^{n}\big)^{n}\cdot \prod_{k\geq 1} \big(1-q^{k}\big)^{-2k},\end{equation*}
\end{itemize}
where we used the notation \begin{equation}
\label{notat}x=\exp\left(\vartheta-\frac{v}{ \hbar}\right), \qquad q=\exp\left(\varphi- \frac{w}{\hbar}\right).\end{equation}
\end{problem}

Note that condition (iv) arises by comparing the functions $X_r(\hbar)$ and $X_{-r}(-\hbar)$ for non-active rays $r\subset \Sigma(0)$, using the symmetry relation \eqref{tor}.
In the case when $\vartheta=\varphi=0$, Problem \ref{tra}   coincides with \cite[Problem 3.3]{Con} except that we have rescaled the parameters $(v,w)$ by a factor of $2\pi i$, and more importantly, we are now allowing our solutions $B_n(\hbar)$ and $D_n(\hbar)$ to have zeroes and poles on the regions $\cV(n)$. The argument explained in Remark \ref{thth} proves the necessity of this change: for general choices of $\vartheta$ and $\varphi$ there can be no solution to Problem \ref{tra} in which $B_n(\hbar)$ and $D_n(\hbar)$ are holomorphic and non-vanishing. 

\subsection{Difference equations}

Our variation of BPS structures carries  a 
 natural action of the group $\bZ$, which geometrically corresponds to tensoring with line bundles on $X$. This symmetry allows us to restate the  RH problem of the last subsection as a  system of coupled difference equations. We refer the reader to \cite{Con} for the details of this, the basic upshot being that it is natural to seek a solution 
%
 to Problem \ref{tra} satisfying
\[B_n(v,w,\vartheta,\varphi,\hbar)= B_0(v+nw,w,\vartheta+n\varphi,\varphi,\hbar),\]
\[D_n(v,w,\vartheta,\varphi,v)=D_0(v+nw,w,\vartheta+n\varphi,\varphi,v) \cdot B_0(v+nw,w,\vartheta+n\phi,\varphi,\hbar)^{n}.\]
Assuming this ansatz, we can then restate Problem \ref{tra}  in terms of just two functions $B=B_0$ and $D=D_0$.

\begin{problem}
\label{tre}
Find meromorphic functions  $B(v,w,\vartheta,\varphi,\hbar)$ and $D(v,w,\vartheta,\varphi,\hbar)$  defined for $(v,w)\in M_+$, $(\vartheta,\varphi)\in \bC^2$ and $\hbar\in \cV(0)$, such that for  fixed $(v,w,\vartheta,\varphi)$ the following properties hold:
\begin{itemize}
\item[(i)] one has
\[B(v,w,\vartheta,\varphi,\hbar)\to 1, \qquad  D(v,w,\vartheta,\varphi,\hbar)\to  1,\]
as $\hbar\to 0$ in any closed subsector of $\cV(0)$;\smallskip

\item[(ii)] there  exists $k>0$  such that \[ |\hbar| ^{-k} <  |B(v,w,\vartheta,\varphi,\hbar)|,  |D(v,w,\vartheta,\varphi,\hbar)| < |\hbar|^k,\]
as $\hbar\to \infty$ in  any closed subsector of $ \cV(0)$;\smallskip

\item[(iii)] there are relations
\[{B(v+w,w,\vartheta+\varphi,\varphi,\hbar)}={B(v,w,\vartheta,\varphi,\hbar)}
\cdot(1-x)^{-1}, \]\[{D(v+w,w,\vartheta+\varphi,\varphi,\hbar)}={D(v,w,\vartheta,\varphi,\hbar)}\cdot  B(v+w,w,\vartheta+\varphi,\varphi,\hbar)^{-1},\]
for  $\hbar\in \bC^*$ lying in the  intersection $\cH(0)=\cV(0)\cap\cV(1)$;\smallskip 

\item[(iv)] for $\hbar\in -i\cdot \Sigma(0)$  there are relations
\[B(v,w,\vartheta,\varphi,\hbar)\cdot B(v,w,-\vartheta,-\varphi,-\hbar)=\prod_{n\geq 0}\big(1-x q^{n}\big)\cdot \prod_{n\geq 1}\big(1-x^{-1} q^{n})^{-1},\]\begin{equation*} D(v,w,\vartheta,\varphi,\hbar)\cdot D(v,w,-\vartheta,-\varphi,-\hbar)=\prod_{n\geq 0}\big(1-xq^{n}\big)^{n}\cdot {\prod_{n\geq 1}\big(1-x^{-1}  q^{n}\big)^{n}}\cdot {\prod_{k\geq 1} \big(1-q^{k}\big)^{-2k}},\end{equation*}
\end{itemize}
where we used the notation \eqref{notat} as before.
\end{problem}

\subsection{Special functions}
Recall from \cite[Section 4]{Con} the following two functions defined by the integral representations
\[F(z\b\omega_1,\omega_2)=\exp\bigg(\int_C \frac{e^{zs}}{(e^{\omega_1 s}-1)(e^{\omega_2 s}-1)}\frac{ds}{s}\bigg),\]
 \[G(z\b\omega_1,\omega_2)=\exp\bigg(\int_C \frac{-e^{(z+\omega_1)s}}{(e^{\omega_1 s}-1)^2(e^{\omega_2 s}-1)}\frac{ds}{s}\bigg),\]
valid when $\Re(\omega_i)>0$ and $0<\Re(z)<\Re(\omega_1+\omega_2)$.
 The contour $C$ follows the real axis from $-\infty$ to $+\infty$ avoiding the origin by a small detour in the upper half-plane.

Now consider the combinations
\begin{equation}
\label{b}F^*(v,w,\vartheta,\varphi,\hbar)=F(v-\hbar\vartheta\b w-\hbar\varphi,-2\pi i \hbar)\cdot \exp Q_F(v,w,\vartheta,\varphi,\hbar),\end{equation}
\begin{equation}
\label{c}G^*(v,w,\vartheta,\varphi,\hbar)=G(v-\hbar\vartheta\b w-\hbar\varphi,-2\pi i \hbar)\cdot \exp Q_G(v,w,\vartheta,\varphi,\hbar),\end{equation}
where the exponential factors are 
\begin{equation}
\label{expf}Q_F(v,w,\vartheta,\varphi,\hbar)=\frac{1}{\hbar}\cdot \frac{w-\hbar\varphi}{(2\pi i)^2 } \cdot \Li_2(e^{2\pi i v/w}) + \Big( \frac{v\varphi-w\vartheta}{2\pi i w}+\frac{1}{2}\Big) \Li_1(e^{2\pi i v/w}),\end{equation}
\begin{equation}
\label{expg}Q_G(v,w,\vartheta,\varphi,\hbar)=\frac{2}{\hbar}\cdot \frac{w-\hbar\varphi}{(2\pi i)^3 } \cdot \Li_3(e^{2\pi i v/w}) -\frac{1}{\hbar}\cdot \frac{v-\hbar\vartheta}{(2\pi i)^2} \cdot \Li_2(e^{2\pi i v/w}) \end{equation}\[+\frac{1}{\pi i}  \Big(\frac{v\varphi-w\vartheta}{2\pi i w}+\frac{1}{4}\Big)\Li_2(e^{2\pi i v/w})-\frac{v}{w} \Big(\frac{v\varphi-w\vartheta}{2\pi i w}+\frac{1}{2}\Big)\Li_1(e^{2\pi i v/w}).\]
These expressions are uniquely determined by the following  result.
\begin{lemma}
\label{xmas2}
Consider fixed parameters $(v,w)\in M_+$ and $(\vartheta,\varphi)\in \bC^2$, and assume that $0<\Re(v)<\Re(w)$. Then as $\hbar\to 0$ in the half-plane $\Im(\hbar)>0$  we have
\[F^*(v,w,\vartheta,\varphi,\hbar)\to 1, \qquad G^*(v,w,\vartheta,\varphi,\hbar)\to 1.\]
Moreover, if  $\vartheta$ and $\varphi$ are generic, these functions are bounded as $\hbar\to \infty$ in the same half-plane.
\end{lemma}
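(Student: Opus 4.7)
The plan is to derive the small-$\hbar$ asymptotic expansions of $F$ and $G$ by residue calculus on the defining contour integrals, check that the exponential correction factors $Q_F$ and $Q_G$ are engineered to cancel the singular parts of those expansions, and then handle large $\hbar$ by a direct estimate on the contour.

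First I would establish the asymptotics of $\log F(z\mid \omega_1,\omega_2)$ as $\omega_2\to 0$ through the upper half-plane. The integrand
\[\frac{e^{zs}}{(e^{\omega_1 s}-1)(e^{\omega_2 s}-1)}\cdot \frac{1}{s}\]
has a pole of order three at $s=0$ and simple poles at $s=2\pi i m/\omega_1$ and $s=2\pi i n/\omega_2$. For $\Im(\hbar)>0$ the poles from the $\omega_2$-factor lie far from the real axis and can be pushed to infinity, so the contour $C$ can be deformed to pick up the residue at $s=0$ and the residues at $s=2\pi i m/\omega_1$ for $m>0$. Summing the $m>0$ residues produces dilogarithms, and the pole at $s=0$, extracted by a short Laurent expansion of the integrand, yields the $1/\omega_2$ term. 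The result is an expansion whose leading singular part is
\[\log F(z\mid\omega_1,\omega_2) \;=\; -\frac{1}{\omega_2\,\omega_1}\,\Li_2(e^{2\pi i z/\omega_1}) \;-\;\Big(\tfrac{z}{\omega_1}-\tfrac{1}{2}\Big)\Li_1(e^{2\pi i z/\omega_1}) \;+\; O(\omega_2).\]
An analogous calculation for $G$, using that the $\omega_1$-factor is now squared, gives
\[\log G(z\mid\omega_1,\omega_2) \;=\; -\frac{2}{\omega_2\,\omega_1^2}\,\Li_3(e^{2\pi i z/\omega_1}) \;+\;\frac{z}{\omega_1^2}\,\Li_2(e^{2\pi i z/\omega_1}) \;+\;\tfrac{1}{\pi i \omega_1}(z-\tfrac{\omega_1}{2}+\tfrac{i\omega_1}{2})\Li_2(e^{2\pi i z/\omega_1}) +\cdots +O(\omega_2),\]
with analogous $\Li_1$ and constant terms whose exact form I would pin down in the same residue calculation.

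Next I would substitute $z=v-\hbar\vartheta$, $\omega_1=w-\hbar\varphi$, $\omega_2=-2\pi i\hbar$ and Taylor-expand each polylogarithm in $\hbar$ about $e^{2\pi i v/w}$. Writing $(v-\hbar\vartheta)/(w-\hbar\varphi)=v/w+\hbar(v\varphi-w\vartheta)/w^2+O(\hbar^2)$, the derivatives $\partial \Li_k/\partial u = 2\pi i\,\Li_{k-1}(e^u)$ convert these linear-in-$\hbar$ shifts inside $\Li_2,\Li_3$ into fresh $\Li_1,\Li_2$ contributions. Collecting the $\hbar^{-1}$ and $\hbar^0$ parts of $\log F(v-\hbar\vartheta\mid w-\hbar\varphi,-2\pi i\hbar)$ and comparing with \eqref{expf}, one verifies that $Q_F$ is exactly the negative of this singular part; hence $F^*\to 1$. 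The same check for $G$ against \eqref{expg} gives $G^*\to 1$. This is the step where I expect the main technical effort to lie: one has to keep track of several interacting Laurent and Taylor expansions and match all the $\frac{1}{2}$- and $\frac{1}{4}$-type constants and the mixed $v\varphi-w\vartheta$ coefficients exactly.

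Finally, for boundedness as $\hbar\to\infty$ in $\Im\hbar>0$, observe first that the exponential factors $Q_F,Q_G$ are each an $O(1/\hbar)$ term (which vanishes) plus a bounded combination of $\Li_k(e^{2\pi i v/w})$ evaluated at the fixed point $(v,w)$, so $\exp Q_F$ and $\exp Q_G$ are bounded. It remains to bound the integrals defining $F,G$ when $\omega_2=-2\pi i\hbar$ is large. For generic $\vartheta,\varphi$ the hypothesis $0<\Re(v)<\Re(w)$ propagates to $0<\Re(v-\hbar\vartheta)<\Re(w-\hbar\varphi)$ on the relevant half-plane after a possible shift of the contour $C$ by a small amount into the half-plane where $\Re(s)$ can be chosen to make $e^{zs}$ decay and both $e^{\omega_1 s}-1$ and $e^{\omega_2 s}-1$ stay uniformly away from zero. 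Under such a deformation the integrand is uniformly $L^1$ in $s$ and continuous in $\hbar^{-1}$; as $\hbar^{-1}\to 0$ the factor $(e^{\omega_2 s}-1)^{-1}=(e^{-2\pi i s/\hbar^{-1}}-1)^{-1}$ tends to a bounded limit, so the integrals are bounded. This completes the proof; the genericity in $\vartheta,\varphi$ is used only to keep the contour away from the poles of the integrand during this deformation.
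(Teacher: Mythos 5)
Your treatment of the small-$\hbar$ limit follows the same route as the paper (which simply cites the asymptotic expansions of $\log F$ and $\log G$ from the resolved conifold paper and then Taylor-expands in the shifted arguments), so re-deriving the expansion by residues is fine in principle. But note that the leading term you wrote down, $-\tfrac{1}{\omega_2\omega_1}\Li_2(e^{2\pi i z/\omega_1})$, cannot be right: $F$ is homogeneous of degree zero under simultaneous rescaling of $(z,\omega_1,\omega_2)$ (substitute $s\mapsto s/\lambda$ in the integral), so the coefficient of the degree-zero quantity $\Li_2(e^{2\pi i z/\omega_1})$ must itself be degree zero; the correct coefficient is proportional to $\omega_1/\omega_2$, which is what matches the $\tfrac{1}{\hbar}\cdot\tfrac{w-\hbar\varphi}{(2\pi i)^2}$ term in $Q_F$. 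With your coefficient the claimed cancellation against $Q_F$ would fail, so this constant does need to be pinned down correctly before the first half of the proof closes.

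The genuine gap is in the $\hbar\to\infty$ statement. Your direct contour estimate does not go through as described: as $\hbar\to\infty$ the poles of the integrand coming from both $(e^{\omega_1 s}-1)^{-1}$ and $(e^{\omega_2 s}-1)^{-1}$, located at $s=2\pi i m/(w-\hbar\varphi)$ and $s=-n/\hbar$, accumulate at the origin and in general cross the contour $C$, and the parameters $(z,\omega_1,\omega_2)=(v-\hbar\vartheta,\,w-\hbar\varphi,\,-2\pi i\hbar)$ leave the region $\Re(\omega_i)>0$, $0<\Re(z)<\Re(\omega_1+\omega_2)$ on which the integral representation is valid, so one is really estimating an analytic continuation rather than the integral itself. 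The missing idea — and the one the paper uses — is precisely the homogeneity noted above: since $F(\lambda z\mid\lambda\omega_1,\lambda\omega_2)=F(z\mid\omega_1,\omega_2)$, one has
\[F(v-\hbar\vartheta\mid w-\hbar\varphi,-2\pi i\hbar)=F\big(v/\hbar-\vartheta\,\big|\,w/\hbar-\varphi,\,-2\pi i\big)\longrightarrow F(-\vartheta\mid-\varphi,-2\pi i)\]
as $\hbar\to\infty$, which is a fixed finite value for generic $(\vartheta,\varphi)$ (genericity keeps this limit point away from the zeros and poles of $F$), and similarly for $G$. Combined with the observation that $Q_F$ and $Q_G$ stay bounded as $\hbar\to\infty$ — here be careful: the terms $\tfrac{1}{\hbar}(w-\hbar\varphi)\Li_k$ and $\tfrac{1}{\hbar}(v-\hbar\vartheta)\Li_k$ are not $O(1/\hbar)$ but tend to the nonzero constants $-\varphi\Li_k$ and $-\vartheta\Li_k$ — this gives the required boundedness with essentially no estimation at all.
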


\begin{proof}
The first statement  follows from the asymptotic expansions given in \cite[Proposition 4.6]{Con}. As $\hbar\to 0$ in the upper half-plane these  give\[\log F(v\b w,-2\pi i \hbar)\sim -\frac{1}{\hbar}\cdot \frac{w}{(2\pi i)^2} \cdot \Li_2(e^{2\pi i v/w})-\frac{1}{2}\Li_1(e^{2\pi i v/w}) + O(\hbar).\]
\[\log G(v\b w,-2\pi i \hbar)\sim  -\frac{2}{\hbar}\cdot \frac{w}{(2\pi i)^3 }\cdot  \Li_3(e^{2\pi i v/w}) +\frac{1}{\hbar}\cdot \frac{v}{(2\pi i)^2}\cdot \Li_2(e^{2\pi i v/w}) \]\[-\frac{1}{4\pi i} \cdot \Li_2(e^{2\pi i v/w}) +\frac{v}{2w} \cdot \Li_1(e^{2\pi i v/w})+ O(\hbar).\]
Substituting $v\mapsto v-\hbar\vartheta$ and $w\mapsto w-\hbar\varphi$ and noting that
\[\Li_k\Big(e^{ \frac{2\pi i (v-\hbar\vartheta)}{w-\hbar\varphi}}\Big)=\Li_k\left(e^{2\pi i v/w}\right)+ 2\pi i \hbar \Big(\frac{v\varphi-w\vartheta}{w^2}\Big) \Li_{k-1}(e^{2\pi i v/w})+O(\hbar^2)\]
 gives the required relations
\[\log F(v-\hbar\vartheta\b w-\hbar\varphi,-2\pi i \hbar)\sim  -Q_F(v,w,\vartheta,\varphi,\hbar) +O(\hbar),\]
\[\log G(v-\hbar\vartheta\b w-\hbar\varphi,-2\pi i \hbar)\sim  -Q_G(v,w,\vartheta,\varphi,\hbar) +O(\hbar).\]

For the second statement, note that the functions $F$ and $G$ are homogeneous under rescaling all variables, so as $\hbar\to \infty$
\[F(v-\hbar\vartheta\b w-\hbar\varphi,-2\pi i \hbar)\sim F(\vartheta\b \varphi,2\pi i),\]
and similarly for $G$. The claim then follows from the form of the expressions $Q_F$ and $Q_G$.
\end{proof}

\subsection{Solution}
Using a mild extension of the techniques of \cite{Con} we can solve the RH problem for the conifold using the  special functions introduced above. 

\begin{thm}
\label{sol}
Consider fixed parameters $(v,w)\in M_+$ and take $(\vartheta,\varphi)\in \bC^2$ to be generic. Then the functions
\[B(v,w,\vartheta,\varphi,\hbar) =F^*(v,w,\vartheta,\varphi,\hbar),\]
\begin{equation}\label{ozbound} D(v,w,\vartheta,\varphi,\hbar)= G^*(v,w,\vartheta,\varphi,\hbar)\cdot G^*(0,w,0,\varphi,\hbar)^{-1}\end{equation}give a solution to Problem \ref{tre}.\end{thm}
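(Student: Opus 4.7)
The plan is to verify the four conditions of Problem \ref{tre} in turn, leveraging the work already done in \cite{Con} for the case $\vartheta=\varphi=0$. The essential new content is to check that the exponential prefactors $\exp(Q_F)$ and $\exp(Q_G)$ defined in \eqref{expf}--\eqref{expg} have been chosen exactly so as to preserve all four properties when the parameters $(\vartheta,\varphi)$ are switched on.

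For conditions (i) and (ii), I would apply Lemma \ref{xmas2} directly to $B=F^*$, and for $D$ apply it to both factors in \eqref{ozbound}; the quotient of two functions tending to $1$ still tends to $1$, and the growth bounds add. One small technicality is that Lemma \ref{xmas2} assumes $0<\Re(v)<\Re(w)$, whereas the denominator in \eqref{ozbound} is evaluated at $v=0$; I would handle this by invoking the periodicity of the integrand and deforming the contour, or equivalently by the homogeneity property of $G$ used at the end of the proof of Lemma \ref{xmas2}.

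For the difference equations (iii), the strategy is to separate the contributions of the bare $F,G$ factors from the prefactors. The crucial observation is that since $e^{2\pi i(v+w)/w}=e^{2\pi i v/w}$ and $(v+w)\varphi-w(\vartheta+\varphi)=v\varphi-w\vartheta$, most polylogarithmic terms in $Q_F,Q_G$ are invariant under the shift $(v,\vartheta)\mapsto(v+w,\vartheta+\varphi)$; only a few explicitly $v$- or $\vartheta$-dependent terms (in $Q_G$) change, and the change is linear in $\Li_1,\Li_2$. These must cancel against the shift of $F,G$ themselves, which one computes from the integral representations: moving $z$ by $\omega_1$ in the exponent inserts an extra factor $(e^{\omega_1 s}-1)$ and collapses the integrand to $e^{zs}/(s(e^{\omega_2 s}-1))$, whose residue expansion yields $\log(1-x)$ plus a linear correction in $v/\hbar$ that precisely absorbs the $\Li_1$ shift in $Q_F$, and analogously for $G$ and the $\Li_2$ shift in $Q_G$. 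The pre-existing computations in \cite{Con} handle the $\vartheta=\varphi=0$ case, and I would just track the new dependence.

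The main obstacle is condition (iv), the symmetry relation under $(\hbar,\vartheta,\varphi)\mapsto(-\hbar,-\vartheta,-\varphi)$. The identities $F(z|\omega_1,\omega_2)\cdot F(-z|{-}\omega_1,{-}\omega_2)$ and the analogue for $G$ can be evaluated by deforming the contour across the poles of the integrand on the real axis, generating an infinite product of $(1-e^{2\pi i z\cdot k/\omega_i})$ factors that match the right-hand side in (iv). The delicate bookkeeping is to show that the $\hbar\to-\hbar$ symmetry of the prefactors $Q_F+Q_F^{-}$ and $Q_G+Q_G^{-}$ produces exactly the remaining finite correction—this will involve the reflection identity $\Li_2(x)+\Li_2(x^{-1})=-\tfrac{1}{2}\log(-x)^2-\tfrac{\pi^2}{6}$ and its analogue for $\Li_3$, as well as the observation that $Q_G(0,w,0,\varphi,\hbar)+Q_G(0,w,0,\varphi,-\hbar)$ is exactly the log of $\prod_{k\geq 1}(1-q^k)^{-2k}$ needed to normalise the $D$-equation in \eqref{ozbound}. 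Once these $\Li_k$-identities are matched term by term with the previous case $\vartheta=\varphi=0$ treated in \cite{Con}, the result will follow.
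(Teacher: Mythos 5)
Your route is the same as the paper's: conditions (i)--(ii) from Lemma \ref{xmas2}, and (iii)--(iv) by reducing to the difference and reflection identities of the bare functions $F$ and $G$ established in \cite[Section 4]{Con}, the only new content being the behaviour of the prefactors $Q_F,Q_G$. You also rightly flag the boundary case $v=0$ in the normalising factor of \eqref{ozbound}, which Lemma \ref{xmas2} as stated does not cover. However, two of your specific claims about the prefactors are incorrect, and the second would derail the verification of condition (iv) if executed literally.

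In (iii) there is no ``$\Li_1$ shift in $Q_F$'' for the shift of $F$ to absorb: since $e^{2\pi i(v+w)/w}=e^{2\pi iv/w}$ and $(v+w)\varphi-w(\vartheta+\varphi)=v\varphi-w\vartheta$, the prefactor $Q_F$ is \emph{exactly} invariant under the shift, so the $B$-relation is verbatim the bare difference equation of $F$. The real bookkeeping concerns $D$: one checks $Q_G(v+w,w,\vartheta+\varphi,\varphi,\hbar)-Q_G(v,w,\vartheta,\varphi,\hbar)=-Q_F(v,w,\vartheta,\varphi,\hbar)$, which exactly matches the factor $\exp(-Q_F)$ carried by $B(v+w,\dots)^{-1}$ on the right-hand side of the $D$-relation; what remains is the bare identity for $G$ from \cite{Con}. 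More seriously, in (iv) the combination $Q_G(0,w,0,\varphi,\hbar)+Q_G(0,w,0,-\varphi,-\hbar)$ is \emph{not} $\log\prod_{k\geq1}(1-q^k)^{-2k}$: the $\zeta(3)$ terms cancel and the sum is the constant $-\pi i/12$, independent of $\hbar$, whereas the product depends on $\hbar$ through $q$. The MacMahon-type factor in (iv) comes from the reflection identity of the bare function $G$ evaluated at the normalisation point $z=0$ in \cite[Proposition 4.3]{Con}, not from the elementary prefactors. The correct statement about the prefactors is that $Q_F(v,w,\vartheta,\varphi,\hbar)+Q_F(v,w,-\vartheta,-\varphi,-\hbar)=\Li_1(e^{2\pi iv/w})$ and the analogous $Q_G$-combination equal expressions independent of $(\vartheta,\varphi)$, so that condition (iv) reduces verbatim to the case $\vartheta=\varphi=0$ treated in \cite[Proposition 5.2]{Con}. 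With these corrections your sketch becomes the paper's proof.
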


\begin{proof}
After Lemma \ref{xmas2} we must just check the jumping conditions, parts (iii) and (iv). These follow by the same argument in the proof of \cite[Proposition 5.2]{Con} which is the case $\vartheta=\varphi=0$. Note that the exponential factors $Q_F$ and $Q_G$ make no difference in part (iii) because
\begin{equation}
\label{rel1}Q_F(v+w,w,\vartheta+\varphi,\varphi,\hbar)=Q_F(v,w,\vartheta,\varphi,\hbar)\end{equation}
\begin{equation}
\label{rel2}Q_G(v+w,w,\vartheta+\varphi,\varphi,\hbar)-Q_G(v,w,\vartheta,\varphi,\hbar)=-Q_F(v,w,\vartheta,\varphi,\hbar).\end{equation}
The required identities then follow from the corresponding properties of the functions $F$ and $G$ given in \cite[Section 4]{Con}. 
For part (iv) note that the combinations
\[Q_F(v,w,\vartheta,\varphi,\hbar)+Q_F(v,w,-\vartheta,-\varphi,-\hbar)=\Li_1(e^{2\pi i v/w})=-\log(1-e^{2\pi i v/w}),\]
\[Q_G(v,w,\vartheta,\varphi,\hbar)+Q_G(v,w,-\vartheta,-\varphi,-\hbar)=\frac{d}{dw} \left(\frac{w}{2\pi i}\cdot \Li_2(e^{2\pi i v/w})\right),\]
are independent of $(\vartheta,\varphi)$. 
The required relation then  follows from the reflection identities of \cite[Proposition 4.3]{Con} by the same argument as \cite[Proposition 5.2]{Con}.
\end{proof}

Similar remarks to those of Section \ref{unique} apply in the setting of Theorem \ref{sol}. In particular, the given solution  is not unique with the stated properties. 

\subsection{Joyce function}
We choose the quadratic refinement $\sigma\in \bT_-$ defined by
\[\sigma(\beta)=-1, \qquad \sigma(\delta)=1, \qquad \sigma(\beta^\vee)=1, \qquad \sigma(\delta^\vee)=1,\]
and consider the corresponding isomorphism $\rho_\sigma\colon \bT_+\to \bT_-$. 
Setting $\zeta=\rho_{\sigma}^{-1}(\xi)$
we have
\[y_{\beta}(\zeta)=\exp(\theta), \qquad y_{\delta}(\zeta)=\exp(\phi), \qquad y_{\beta^\vee}(\zeta)=\exp(\theta^\vee),\qquad y_{\delta^\vee}(\zeta)=\exp(\phi^\vee),\]
where $\theta=\vartheta-\pi i$ but the other variables are unshifted: $\phi=\varphi$, $\theta^\vee=\vartheta^\vee$ and $\phi^\vee=\varphi^\vee$. Note that these substitutions  produce a simplification in the expressions \eqref{expf} and \eqref{expg}.

 To compute the Joyce function  corresponding to the solution of Theorem \ref{sol} we again use the formula
\begin{equation}
\label{master3}\frac{\partial x_j}{\partial z_i}+\frac{1}{\hbar} \cdot \frac{\partial x_j}{\partial \theta_i}+\sum_{p,q} \eta_{pq}   \cdot \frac{\partial^2 J}{\partial \theta_i\, \partial \theta_p}\cdot \frac{\partial x_j}{\partial \theta_q}=0\end{equation}
derived in Section \ref{recon}. Arguing as in Section \ref{cjf} using the trivial part of the solution \eqref{trevor} shows that the Joyce function can be assumed to be independent of the dual variables $\vartheta^\vee,\varphi^\vee$. 
The non-trivial part \eqref{nontrev} of the solution is
\[x_{n,\beta^\vee}(\hbar)=\vartheta^\vee-\frac{v^\vee}{\hbar}+\log B(v+nw,w,\vartheta+n\varphi,\varphi,\hbar),\]\[ x_{n,\delta^\vee}(\hbar)=\varphi^\vee-\frac{w^\vee}{\hbar}+\log D(v+nw,w,\vartheta+n\varphi,\varphi,\hbar)+n \log B(v+nw,w,\vartheta+n\varphi,\varphi,\hbar).\]

Plugging these solutions into \eqref{master3}  involves applying operators
\[\frac{\partial}{\partial v}+\frac{1}{\hbar}\cdot \frac{\partial}{\partial \theta}, \qquad \frac{\partial}{\partial w}+\frac{1}{\hbar}\cdot \frac{\partial}{\partial \phi}.\]
which kill any functions of the variables $v-\hbar \theta$ , $w-\hbar\phi$ and $\hbar$. It therefore follows from the form of the definitions \eqref{b} and \eqref{c} that we  need only deal with the exponential factors $Q_F$ and $Q_G$.
The second factor in \eqref{ozbound}  makes no contribution because
\[Q_G(0,w,0,\varphi,\hbar)=\frac{2}{\hbar}\cdot \frac{w-\hbar \varphi}{(2\pi i)^3}\cdot \zeta(3)  -\frac{\pi i}{24}.\] Moreover, the relations \eqref{rel1}-\eqref{rel2} show that the result is independent of $n$. Differentiating the expressions \eqref{expf} and \eqref{expg}  now gives
\begin{eqnarray*}\frac{\partial^2 J}{\partial \theta^2}&=&\bigg(\frac{\partial}{\partial v}+\frac{1}{\hbar}\cdot\frac{\partial}{\partial \theta}\bigg) Q_F(v,w,\theta,\phi,\hbar)= \frac{1}{w^2}\cdot (v\phi-w\theta)\cdot \Li_0(e^{2\pi i v/w}),\\
\frac{\partial^2 J}{\partial \theta \partial \phi}&=&\bigg(\frac{\partial}{\partial w}+\frac{1}{\hbar}\cdot\frac{\partial}{\partial \phi}\bigg) Q_F(v,w,\theta,\phi,\hbar)=-\frac{v}{w^3}\cdot (v\phi-w\theta)\cdot \Li_0(e^{2\pi i v/w}),\\
\frac{\partial^2 J}{\partial \phi^2}&=&\bigg(\frac{\partial}{\partial w}+\frac{1}{\hbar}\cdot\frac{\partial}{\partial \phi}\bigg) Q_G(v,w,\theta,\phi,\hbar)=\frac{v^2}{w^4}\cdot (v\phi-w\theta)\cdot \Li_0(e^{2\pi i v/w}).\end{eqnarray*}
It follows that up to linear terms in $\theta, \phi$ the Joyce function is
\[J(v,w,\theta,\phi)=\frac{1}{6w^4} \cdot (v\phi-w\theta)^3\cdot \Li_0(e^{2\pi i v/w})=\frac{1}{6w^4} \cdot (v\phi-w\theta)^3\cdot (1-e^{-2\pi i v/w})^{-1}.\]
Note that  the Joyce form vanishes since
\[H(v,w,\theta,\phi)=v\cdot \frac{\partial J}{\partial \theta}+w \cdot \frac{\partial J}{\partial \phi}=0.\]
Thus we cannot define a diamond product. The prepotential $\cF\colon M\to \bC$  in the sense of Section \ref{prepot}  is given by the formula  
\[\cF(v,w)=-\frac{w^2}{(2\pi i)^3}\cdot \Li_3(e^{2\pi i v/w}),\]
up to quadratic terms in $v,w$. It satisfies the relations
\[\frac{\partial^3 \cF}{\partial v^3}=\frac{\partial^3 J}{\partial \theta^3}, \qquad \frac{\partial^3 \cF}{\partial v^2\, \partial w}=\frac{\partial^3 J}{\partial \theta^2\,\partial \phi}, \qquad \frac{\partial^3 \cF}{\partial v\, \partial w^2}=\frac{\partial^3 J}{\partial \theta\,\partial \phi^2},\qquad \frac{\partial^3 \cF}{\partial w^3}=\frac{\partial^3 J}{\partial \phi^3}.\]
Note that on the locus $w=1$ the prepotential coincides, up to a constant factor, with the genus 0 Gromov-Witten generating function for the resolved conifold.

\subsection{Threefolds without compact divisors}

Let $X$ be a non-compact Calabi-Yau threefold which contains no compact divisors, and introduce the complexified K{\"a}hler cone
\[\mathcal{K}_\bC(X)=\{\omega_\bC=B+i\omega:\text{ $\omega$ ample}\}\subset H^2(X,\bC).\]
The base of our variation of BPS structures  will be  the complex manifold
\[M=\mathcal{K}_{\bC}(X)\times \bC^*.\]
We consider a framed variation of BPS structures whose lattice \[\Gamma_{\leq 1}=H_2(X,\bZ)\oplus\bZ\] is the receptacle for Chern characters of compactly-supported coherent sheaves on $X$. As before, we equip this lattice with the form $\<-,-\>=0$.
The central charge corresponding to a point $(\omega_\bC,w)\in M$ is the map
\[Z_{\leq 1}\colon \Gamma_{\leq 1}\to \bC, \qquad Z_{\leq 1}(\beta,n)=v(\beta)+nw, \qquad v(\beta)=\beta\cdot\omega_{\bC}.\]

The relevant BPS invariants
$\Omega(\gamma)\in \bQ$ were first constructed by Joyce and Song (\cite{JS}, see particularly Sections 6.3--6.4). These invariants  are defined using moduli stacks of semistable objects supported in dimension $\leq 1$, and should not be confused with the ideal sheaf curve-counting invariants appearing in the famous MNOP conjectures \cite{MNOP}.  Joyce and Song prove that the numbers $\Omega(\gamma)$  are independent of the point of $M$. This is to be expected, since wall-crossing is trivial when the form $\<-,-\>$ vanishes.
A direct calculation \cite[Section 6.3]{JS} shows that
 \[\Omega(0,n)=-\chi(X),\qquad n\in \bZ\setminus\{0\},\]
 where $\chi(X)$ denotes the topological Euler characteristic of the manifold $X$.
It is expected \cite[Conjecture 6.20]{JS} that when $\beta>0$ is a positive curve class
\begin{equation}
\label{lausanne}
\Omega(\beta,n)=\GV(0,\beta),\end{equation}
and in particular, is independent of $n$.  Here $\GV(0,\beta)$ is the genus 0 Gopakumar-Vafa invariant for the class $\beta\in H_2(X,\bZ)$.
 We emphasise that the higher genus Gopakumar-Vafa invariants are invisible from the point-of-view of the  torsion sheaf invariants $\Omega(\gamma)$.
 
 \begin{assumption}
We assume the identity \eqref{lausanne}, and that moreover the invariants $\GV(0,\beta)$ vanish for all but finitely many classes $\beta\in H_2(X,\bZ)$.
\end{assumption}  
 
We can formulate a RH problem in exactly the same way as we did for the special case of the refined conifold above. It depends on a point $\xi\in \bT$ and we set
\[\xi(\beta)=\exp (\vartheta(\beta)), \qquad \xi(\delta)=\exp(\varphi).\]
We refrain from writing the problem out in detail since the formulae are a little cumbersome. In any case, it is clear that a solution can be obtained by superposing copies of the functions $B_n$ and $D_n$.
As in Section \ref{gengen} there is now no canonical quadratic refinement, but since these only effect the Joyce function by the addition of quadratic functions of the variables $\theta, \phi$ we can conclude that the Joyce function must be \[J(v,w,\theta,\phi)=\frac{1}{6w^4} \cdot \sum_{\beta\in H_2(X,\bZ)}\GV(0,\beta) \cdot (v(\beta)\phi-w\theta(\beta))^3\cdot\big(1-e^{-2\pi i v(\beta)/w}\big)^{-1},\]
up to the addition of linear terms in $\theta, \phi$. The 
prepotential  is then
\[F(v,w)=-\frac{w^2}{(2\pi i)^3}\cdot \sum_{\beta\in H_2(X,\bZ)} \GV(0,\beta)\cdot \Li_3\big(e^{2\pi i v(\beta)/w}\big),\]
up to the addition of quadratic functions of the linear variables $v(\beta)$. Note that by the Aspinwall-Morrison covering formula \cite{AM}, on the locus $w=1$ the prepotential coincides up to a constant factor with the  genus 0  Gromov-Witten  generating function.


\section{The case of the A$_2$ quiver}
\label{sten}

In this final section we summarise the results of \cite{A2} on the BPS structures arising from DT theory applied to the A$_2$ quiver. At present these are the only examples of  coupled BPS structures where solutions to Problem \ref{dtsect} can be constructed. The method we use owes much to the paper of Gaiotto, Moore and Neitzke \cite{GMN2}. The resulting Joyce structure can be described relatively explicitly, and in particular, the Joyce form can be calculated. A remarkable fact is that this Joyce structure is compatible, in the sense of Section \ref{compatible}, with the polynomial Frobenius structure associated to the A$_2$ root system. For further details on the material of this section we refer the reader to  \cite{A2}.

\subsection{Quadratic differentials}

Let us consider a meromorphic quadratic differential \[\phi(x)=\varphi(x) dx^{\tensor 2}\]
 on the Riemann surface $\bP^1$ having a single pole of order $7$ at the point $x=\infty$, and three simple zeroes.
It is easy to see  \cite[Section 12.1]{BS} that any such differential can be put in the  form
\begin{equation}\label{phir}\phi(x)=(x^3 + ax +b) dx^{\tensor 2}\end{equation}
 by applying an automorphism of $\bP^1$. 
 Away from the zeroes and poles of $\phi(x)$ there is a distinguished local co-ordinate on $\bP^1$
\[w(x)=\pm \int_*^{x} \sqrt{\varphi(u)} \, du\]
 in terms of which $\phi(x)$ takes the form $dw^{\tensor 2}$. Such a co-ordinate is uniquely determined up to transformations of the form $w\mapsto \pm w + c$.
The \emph{horizontal foliation} determined by $\phi(x)$   then consists of the  arcs on which $\Im(w)$ is constant. This foliation has singularities at the zeroes and poles of $\phi(x)$.
 Local computations summarised in \cite[Section 3.3]{BS} show that:
 \begin{itemize}
\item[(i)] there are three horizontal arcs emanating from each of the three simple zeroes;\smallskip

\item[(ii)] there are five tangent distinguished directions at the pole $x=\infty$, and  an open  neighbourhood $\infty\in U\subset \bP^1$ such that all horizontal trajectories entering $U$  approach $\infty$  along one of the distinguished directions.
\end{itemize}

Following \cite[Section 6]{BS} we take the 
 real oriented blow-up  of the surface $\bP^1$ at the point $\infty$ corresponding to the unique pole of  $\phi(x)$. Topologically the  resulting surface $\bS$ is a disc. The distinguished directions  at the pole determine a subset of five  points $\bM\subset \partial \bS$ of the boundary of this disc. The pair $(\bS,\bM)$ is an example of a \emph{marked bordered surface}. The horizontal foliation of $\bP^1$ lifts to a foliation on the surface $\bS$, with singularities at the points $\bM\subset \partial \bS$ and the zeroes of $\phi(x)$.

The quadratic differential \eqref{phir} determines a  double cover
\begin{equation}
\label{peep}\pi\colon X\to \bP^1,\end{equation}
branched at the zeroes and pole of $\phi(x)$, on which there is a well-defined global choice of square-root of $\phi(x)$. This is nothing but the projectivisation of the affine elliptic curve
\[X^\circ=\big\{(x,y)\in \bC^2: y^2=x^3+ax+b\big\}.\]
Taking the periods of the square-root of $\phi(x)$ defines a group homomorphism
\begin{equation}
\label{stuffy}Z\colon H_1(X,\bZ)\to \bC, \qquad Z(\gamma)=\int_{\gamma} \sqrt{\phi(x)}\in \bC.\end{equation}
The differential $\phi$ is called \emph{generic}   if the image of $Z$ is not contained in a one-dimensional real subspace of $\bC$.

A horizontal trajectory of $\phi(x)$  is said to be \emph{finite-length} if it never approaches the pole $x=\infty$. In our situation any such trajectory necessarily connects two distinct simple zeroes of $\phi(x)$, and is known as a \emph{saddle connection}. The inverse image of  a saddle connection under the double cover
\eqref{peep}
is a cycle $\gamma$, which can be canonically oriented by insisting  that
$Z(\gamma)\in \bR_{>0}$.
This then defines a  homology class in the group $H_1(X,\bZ)$. See \cite[Section 3.2]{BS} for more details.\footnote{For the purposes of comparison with the general situation of \cite{BS} involving the hat-homology group $H_1(X_s^\circ,\bZ)^-$, note that the group $H_1(X_s,\bZ)$ coincides with its $-1$ eigenspace under the action of the covering involution of \eqref{peep}; indeed the $+1$ eigenspace can be identified with the homology of the quotient $\bP^1$, which vanishes; moreover, puncturing $X_s$ at the inverse image of the pole $\infty\in \bP^1$ also leaves the first homology  group unchanged.}

More generally we can consider trajectories of the differential $\phi(x)$ of some phase $\theta\in \bR$. By definition these are arcs which make a constant angle $\pi \theta$ with the horizontal foliation. Alternatively one can view them as horizontal trajectories for the rescaled quadratic differential $e^{-2\pi i \theta}\cdot \phi(x)$. Once again, these finite-length trajectories $\gamma\colon [a,b]\to \bC$ define homology classes in $ H_1(X,\bZ)$, with the orientation convention being that
$Z(\gamma)\in \bR_{>0}\cdot e^{\pi i \theta}$.

The quadratic differential \eqref{phir} is said to be \emph{saddle-free} if it has no finite-length horizontal trajectories. This is an open condition. The horizontal foliation of a saddle-free differential splits the surface $\bP^1$ into a union of domains called  \emph{horizontal strips} and \emph{half-planes}. In the present case we obtain five half-planes and two horizontal strips. The resulting trajectory structure  on the blown-up surface $\bS$ is  illustrated in Figure \ref{bel2}. The crosses denote zeroes of the differential, and the black dots are the points of $\bM$.

\begin{figure}[ht]
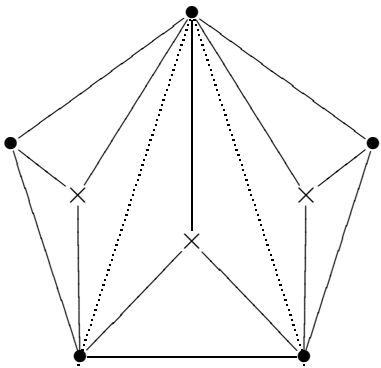

\begin{center}
  \begin{tabular}{c}
\xy /l1.2pc/:
(0,-5)*{\bullet}="1";
(4.76,-1.55)*{\bullet}="2";
(2.94,4.05)*{\bullet}="3";
(-2.94,4.05)*{\bullet}="4";
(-4.76,-1.55)*{\bullet}="5";
(0,1)*{\times}="6";
(3,-.2)*{\times}="7";
(-3,-.2)*{\times}="8";
{"1"\PATH~={**@{-}}'"2"},
{"2"\PATH~={**@{-}}'"3"},
{"3"\PATH~={**@{-}}'"4"},
{"4"\PATH~={**@{-}}'"5"},
{"5"\PATH~={**@{-}}'"1"},
{"2"\PATH~={**@{-}}'"7"},
{"3"\PATH~={**@{-}}'"6"},
{"4"\PATH~={**@{-}}'"6"},
{"5"\PATH~={**@{-}}'"8"},
{"4"\PATH~={**@{-}}'"8"},
{"1"\PATH~={**@{-}}'"8"},
{"1"\PATH~={**@{-}}'"6"},
{"1"\PATH~={**@{-}}'"7"},
{"3"\PATH~={**@{-}}'"7"},
{"1"\PATH~={**@{.}}'"3"},
{"1"\PATH~={**@{.}}'"4"},
\endxy
\end{tabular}
\end{center}
\caption{The separating trajectories of a saddle-free differential   \eqref{phir}. \label{bel2}}
\end{figure}

Taking one trajectory from the interior of each horizontal strip defines a triangulation of the marked bordered surface $(\bS,\bM)$ called the \emph{WKB triangulation} (see \cite[Section 10.1]{BS} for details). In our case there are exactly two  edges, which are the  two dashed edges  in Figure \ref{bel2}. 
As explained in \cite[Section 3.6]{BS}, each of the two horizontal strips contains a unique finite-length trajectory with  phase in the interval $(0,1)\subset \bR$, and the corresponding classes $\gamma_i\in H_1(X_s,\bZ)$ determine a basis $(\gamma_1,\gamma_2)$, whose elements are naturally  indexed by the edges of the WKB triangulation.

\subsection{Variation of BPS structures}
\label{matrix6c}

The base of our variation will be the complex manifold
\[M=\big\{(a,b)\in \bC^2: 4a^3+27b^2\neq 0\big\}.\]
It is proved in \cite{BQS} that this coincides with  the quotient of the space of stability conditions on the bounded derived category of the Ginzburg algebra of the A$_2$ quiver, by the group of autoequivalences generated by spherical twists in the two vertex simples. This can also be viewed as a special case of the main result of \cite{BS}. 

For each point $p=(a,b)\in  M$ there is a  meromorphic  quadratic differential $\phi_p(x)$ on $\bP^1$ given by the formula \eqref{phir}, and a corresponding double cover $\pi\colon X_p\to \bP^1$.
The Gauss-Manin connection on the family of homology groups \begin{equation}
\label{nov}\Gamma_p=H_1(X_p,\bZ)\isom \bZ^{\oplus 2}\end{equation}
gives them the structure of a local system of lattices over $M$. In concrete terms, we can represent classes in $H_1(X_p,\bZ)$  by the inverse images under the double cover \eqref{peep} of paths in $\bC$ connecting the zeroes of $\phi_p(x)$, and the Gauss-Manin connection is then obtained by holding these paths locally constant as $\phi_p(x)$ varies.

There is a natural variation of BPS structures over the space $M$ determined by the family of quadratic differentials \eqref{phir}. At a point $p\in M$ corresponding to a generic differential  the corresponding BPS structure $(\Gamma_p,Z_p,\Omega_p)$ is defined  as follows:
  
\begin{itemize}
\item[(i)]  the charge lattice is $\Gamma_p=H_1(X_p,\bZ)$ with its intersection form $\<-,-\>$;\smallskip

\item[(ii)] the central charge  $Z_p\colon \Gamma_p\to \bC$ is defined by
\begin{equation}
\label{cycling}
Z_p(\gamma)=\int_{\gamma} \sqrt{\phi_p(x)}\, dx\in \bC;\end{equation}
\item[(iii)]  the BPS invariant  $\Omega_p(\gamma)\in \{0,1\}$ is nonzero precisely if the differential $\phi_p(x)$ has a finite-length trajectory defining the given class $\gamma\in \Gamma_p$.
\end{itemize}

Condition (c) needs modification when the differential $\phi_p(x)$ is non-generic,  but  in fact there is no need to give an explicit description of the BPS invariants $\Omega_p(\gamma)$ in this  case, since what is important is the BPS automorphisms $\bS_p(\ell)$, and using the wall-crossing formula, these are determined by the BPS invariants at nearby generic points of $M$.

Suppose that $p\in M$ corresponds to a  saddle-free and generic differential $\phi_p(x)$, and let $(\Gamma,Z,\Omega)$ be the associated BPS structure. As explained above, the lattice \eqref{nov} then has a distinguished basis $(\gamma_1,\gamma_2)$ indexed by the edges of the WKB triangulation. We can canonically order these edges by insisting that $\<\gamma_1,\gamma_2\>=1$. 
Define  $z_i=Z_p(\gamma_i)\in \bC^*$. The orientation conventions discussed above ensure that these points lie in the upper half-plane. One can then show that the BPS invariants are as follows:
 \smallskip
 \begin{itemize}
 \item[(a)]  if $\Im (z_2/z_1)<0$ then $\Omega_p(\pm \gamma_1)=\Omega_p(\pm \gamma_2)=1$ with all others zero; 
 \smallskip
 \item[(b)]  if $\Im (z_2/z_1)>0$ then $\Omega_p(\pm \gamma_1)=\Omega_p(\pm(\gamma_1+\gamma_2))=\Omega_p(\pm \gamma_2)=1$ with all others zero.
 \end{itemize}
 The two cases are illustrated in Figure \ref{autumn}. In both cases there is a distinguished  quadratic refinement
 $\sigma_p\in \bT_{p,-}$, uniquely defined by the property that
  $\sigma_p(\gamma)=-1$ for every active class $\gamma\in \Gamma_p$. It is  easy to check that this choice  extends to  a continuous family  of quadratic refinements $\sigma_p\in \bT_{p,-}$ for all points $p\in M$. 
 
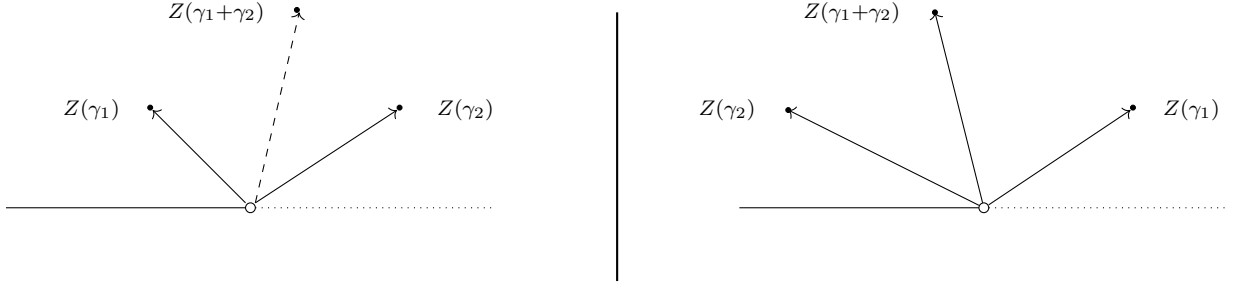
\begin{figure}
\begin{tikzpicture}[scale=.65]
\draw (0,0)--(4.9,0);
\draw[dotted] (5.1,0)--(10,0);
\draw (5,0) circle [radius=0.1];
\draw[->] (5.1,0.1) -- (8,2);
\draw[->] (4.9,0.1) -- (3,2);
\draw[->,dashed](5.1,0.1)--(6,4);
\draw (9.4,2) node { $\scriptstyle Z(\gamma_2)$};
\draw (1.75,2) node {$\scriptstyle Z(\gamma_1)$};
\draw (4.3,4) node {$\scriptstyle Z(\gamma_1+\gamma_2)$};
\draw[fill] (8.05,2.05) circle [radius=0.05];
\draw[fill] (2.95,2.05) circle [radius=0.05];
\draw[fill] (5.95,4.05) circle [radius=0.05];
\draw[thick](12.5,-1.5)--(12.5,4);
\draw (19.9,0)--(15,0);
\draw[dotted] (20.1,0)--(25,0);
\draw (20,0) circle [radius=0.1];
\draw[->] (20.1,0.06) -- (23,2);
\draw[->] (19.9,0.05) -- (16,2);
\draw[->] (19.975,0.1) -- (19,4);
\draw (24.25,2) node { $\scriptstyle Z(\gamma_1)$};
\draw (14.75,2) node { $\scriptstyle Z(\gamma_2)$};
\draw (17.3,4) node { $\scriptstyle Z(\gamma_1+\gamma_2)$};
\draw[fill] (23.05,2.05) circle [radius=0.05];
\draw[fill] (16,2) circle [radius=0.05];
\draw[fill] (19,4) circle [radius=0.05];
\end{tikzpicture}
\caption{The ray diagrams for the BPS structures associated to the A$_2$ quiver.\label{autumn} }
\end{figure}

We can use the quadratic refinement $\sigma_p$  and the map \eqref{bloxy} to identify the twisted torus $\bT_{p,-}$ with the standard torus $\bT_{p,+}$. Under this identification the birational automorphism \eqref{bir} becomes the birational automorphism of $\bT_{p,+}$ defined by
 \[\stokes_p(\ell)^*(y_\beta)=y_\beta\cdot \prod_{Z(\gamma)\in \ell}   (1+y_{\gamma})^{\Omega(\gamma)\<\gamma,\beta\>}.\]

The fact that the above BPS structures fit together to make a variation of BPS structures boils down to the wall-crossing formula
\begin{equation}
\label{pent}C_{\gamma_1} \circ C_{\gamma_2} = C_{\gamma_2} \circ C_{\gamma_1+\gamma_2} \circ C_{\gamma_1},\end{equation}
where for each class $\gamma\in \Gamma$  we defined a birational automorphism $C_\gamma\colon \bT_{p,+}\dashrightarrow \bT_{p,+}$ by
\begin{equation}
\label{missed}C_\gamma^*(y_\beta)=y_\beta\cdot (1+y_\gamma)^{\<\gamma,\beta\>}.\end{equation}
The equation \eqref{pent} is the  pentagon identity familiar from cluster theory.

\subsection{Associated family of opers}

Consider the second-order linear differential equation
\begin{equation}\label{de} y''(x)=Q(x,\hbar) \cdot y(x), \qquad Q(x,\hbar)=\hbar^{-2}\cdot {Q_0(x)}+\hbar^{-1}\cdot Q_1(x) +  Q_2(x),\end{equation}
where primes denote differentiation with respect to the complex variable $x\in \bC$, and the terms in the potential $Q(x,\hbar)$ are
\begin{equation}
\label{pot}Q_0(x)=x^3+ax+b, \qquad Q_1(x)=\frac{p}{x-q}+r, \qquad \end{equation}\begin{equation}
\label{pot2}Q_2(x)=\frac{3}{4(x-q)^2}+\frac{r}{2p(x-q)}+\frac{r^2}{4p^2}.\end{equation}
We view the equation \eqref{de}  as being specified by a point of the complex manifold \[W=\Big\{(a,b,q,p,r)\in \bC^5: p^2=q^3+aq+b\text{ and } 4a^3+27b^2 \neq 0, \ p\neq 0\Big\},\]
together with a nonzero complex number $\hbar\in \bC^*$.

The form of \eqref{pot2} is chosen to ensure that the  point $x=q$ is an apparent singularity of the equation \eqref{de}: transporting any solution around this point changes its sign. Thus the generalised monodromy  of the equation consists only of the Stokes data at the irregular singularity $x=\infty$. The Stokes sectors  are the sectors bounded by the rays through the fifth roots of $-\hbar^2$. In each such sector there is a distinguished subdominant solution to \eqref{de} up to scale, with the property that it decays exponentially as $x\to \infty$ in the sector. After choosing an initial sector, the collection of these subdominant solutions defines a point of the space
\[V=\Big\{\psi\colon \bZ/5\bZ\to \bP^1: \psi(i+1)\neq \psi(i)\text{ for all }i\in \bZ/5\bZ\Big\} \Big / \PGL_2,\]
which is easily seen to be a two-dimensional complex manifold.

For each $\hbar\in \bC^*$ there is then  a holomorphic monodromy map
\[F(\hbar)\colon W \to V,\]
sending the equation \eqref{pot} to its Stokes data. 
This map depends on a labelling of the Stokes sectors for the equation \eqref{de}, which in concrete terms amounts to a choice of fifth root of $\hbar^2$.
We prove in \cite{A2} that this map is invariant under the  isomonodromy flows:
\begin{equation}
\label{fir1}-\frac{1}{\hbar}\frac{\partial}{\partial r}+\bigg(\frac{\partial}{\partial b}+\frac{1}{2p}\frac{\partial}{\partial p} +\frac{r}{2p^2} \frac{\partial}{\partial r}\bigg),\end{equation}
\begin{equation} \label{sec1}-\frac{2p}{\hbar} \frac{\partial}{\partial q}-\frac{3q^2+a}{\hbar}\frac{\partial}{\partial p}+\bigg(\frac{\partial}{\partial a}-q\frac{\partial}{\partial b}-\frac{r}{p}\frac{\partial}{\partial q}- \frac{r(3q^2+a)}{2p^2}\frac{\partial}{\partial p}-\frac{r^2}{2p^3} (3q^2+a) \frac{\partial}{\partial r}\bigg).\end{equation}
In fact the  flow \eqref{fir1} leaves the equation \eqref{de} itself invariant. In the special case $r=0$ the second flow \eqref{sec1} reduces to the standard isomonodromy flow often studied in connection with the Painlev{\'e} I equation. 
 
\subsection{Riemann-Hilbert problem}
It is well known that the monodromy manifold $V$ has birational co-ordinate systems
\begin{equation}
\label{coord}X_T\colon V\dashrightarrow (\bC^*)^2\end{equation}
usually referred to as Fock-Goncharov co-ordinates, and indexed by the triangulations $T$ of a regular pentagon. The maps for different triangulations differ by post-composition by explicit birational automorphisms of $(\bC^*)^2$.

On the other hand, for generic $\lambda\in \bC^*$, as we explained above, the horizontal trajectory structure of the quadratic differential \begin{equation}\label{hq}\lambda^{-2}\cdot Q_0(x) \, dx^{\tensor 2}= \lambda^{-2}\cdot (x^3+ax+b) \, dx^{\tensor 2}\end{equation} naturally determines such a triangulation, namely the WKB triangulation. This triangulation is well-defined for all $\lambda\in \bC^*$ not lying on finitely many rays, but jumps discontinuously when these rays are crossed. 

Fix a point $(a,b,q,p,r)\in W$, and let $(\Gamma,Z,\Omega)$ be the  BPS structure defined by the quadratic differential $Q_0(x) dx^{\tensor 2}$ as above. Define an element $\xi\colon \Gamma\to \bC^*$ of the corresponding algebraic torus $\bT_+$ by setting 
\begin{equation}
\label{xi}\xi(\gamma)= \exp\Bigg(\int_{\gamma}\frac{Q_1(x)\, dx}{2\sqrt{Q_0(x)}}\Bigg)=\exp\Bigg(\int_{\gamma} \bigg(\frac{p}{x-q} + r\bigg) \frac{dx}{2y}\Bigg)\in \bC^*.\end{equation}
The basic claim is that the map which sends $\hbar\in \bC^*$ to the Fock-Goncharov co-ordinates   corresponding  to the WKB triangulation of the differential \eqref{hq}, applied to the monodromy of the equation \eqref{de}, gives a solution to the RH problem associated to the BPS structure $(\Gamma,Z,\Omega)$, with constant term given by \eqref{xi}.

To be completely precise, let us  fix  a non-active ray $r=\bR_{>0}\cdot \lambda$ spanned by an element $\lambda\in \bC^*$. The non-active condition  is equivalent to the statement that the differential \eqref{hq} is saddle-free. The marked bordered surface $(\bS(\lambda),\bM(\lambda))$ for this differential can be identified with the disc in $\bC$, with the marked points being the directions of the fifth roots of $\lambda^2$. The saddle-free condition ensures that the differential \eqref{hq} defines a WKB triangulation $T(\lambda)$ of this surface.

Consider now a point $\hbar\in \bH_r$. Since $\Re(\hbar/\lambda)>0$, there is a natural bijection between the fifth roots of $\lambda^2$ and $\hbar^2$. Thus we can identify the Stokes sectors of the equation \eqref{de} with the marked points $\bM(\lambda)$. Note that the Fock-Goncharov co-ordinates $(X_1,X_2)\in (\bC^*)^2$ for the triangulation $T(\lambda)$ are indexed by the edges of the triangulation, which also index a natural basis for the lattice $\Gamma$. Thus we can view the map \eqref{coord} as taking values in the torus $\bT_+$, and consider the map
\begin{equation}\label{map}X_r \colon \bH_r\to \bT_+,\end{equation}
which sends a point $\hbar\in \bH_r$ to the Fock-Goncharov co-ordinates of the equation \eqref{de} with respect to the WKB triangulation determined by the differential \eqref{hq}. In \cite{A2} we prove

\begin{thm}
\label{two}
The maps  \eqref{map} give a meromorphic solution to the RH problem associated to the BPS structure $(\Gamma,Z,\Omega)$, with constant term given by \eqref{xi}. \qed
\end{thm}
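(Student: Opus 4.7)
The plan is to verify conditions (RH1)--(RH3) of Problem \ref{dtsect} for the map $X_r$ defined by applying Fock-Goncharov coordinates for the WKB triangulation $T(\lambda)$ to the Stokes data at $x=\infty$ of the opers \eqref{de}. Meromorphicity of $X_r$ on $\bH_r$ is essentially automatic: the monodromy map $F(\hbar)\colon W\to V$ depends holomorphically on $\hbar\in\bC^*$, and for fixed $\lambda$ the Fock-Goncharov coordinate system $X_{T(\lambda)}$ is a birational map $V\dashrightarrow (\bC^*)^2$. The subtle point is that the triangulation $T(\lambda)$, though constant along $r$, changes when $\lambda$ crosses an active ray; this is precisely what produces the jumping behavior in (RH1) rather than any pathology in $\hbar$.

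For (RH1), I would analyze how $T(\lambda)$ changes as $\lambda$ crosses an active ray $\ell$. A saddle connection of phase $\arg\lambda$ represents a class $\gamma\in\Gamma$, and just before and after crossing $\ell$ the two WKB triangulations are related by a single flip (in case (a) of Section \ref{matrix6c}) or by two flips around the pentagon (in case (b)). The transformation rule for Fock-Goncharov coordinates under a flip at the edge labelled by $\gamma$ is exactly the birational map $C_\gamma$ of \eqref{missed}. Comparing with the BPS automorphism $\stokes_p(\ell)$ defined by the invariants $\Omega_p(\gamma)$ listed in Section \ref{matrix6c}, and using the pentagon identity \eqref{pent} in case (b), one sees that the composition of flips exactly reproduces $\stokes_p(\ell)$, verifying (RH1). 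The only sign to track is the one absorbed by the quadratic refinement $\sigma_p$ identifying $\bT_{p,-}$ with $\bT_{p,+}$, and this was arranged by the choice $\sigma_p(\gamma)=-1$ for active $\gamma$.

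Condition (RH2) is the heart of the matter and will require the exact WKB expansion. For each fifth root $\hbar_i$ of $\hbar^2$ the equation \eqref{de} has a unique subdominant solution $y_i(x,\hbar)$ in the corresponding Stokes sector. The formal WKB ansatz produces $y_i\sim\exp\bigl(-\hbar^{-1}\int^x\sqrt{Q_0}\,du-\int^x Q_1/(2\sqrt{Q_0})\,du+O(\hbar)\bigr)$, and the Fock-Goncharov coordinate $X_{T(\lambda),\gamma}$ is expressible as a cross-ratio of such subdominants across an edge of $T(\lambda)$ dual to the cycle $\gamma$. Restricting $\hbar\in\bH_r$ ensures the relevant Stokes sectors are correctly labelled and the formal WKB series gives genuine asymptotics via Voros resummation along the paths defining $\gamma$. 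Computing the leading and next-to-leading terms yields
\[
\exp(Z(\gamma)/\hbar)\cdot X_{r,\gamma}(\hbar)\;\longrightarrow\;\exp\!\bigg(\int_\gamma \frac{Q_1(x)\,dx}{2\sqrt{Q_0(x)}}\bigg)=\xi(\gamma),
\]
which is precisely the required asymptotic with constant term \eqref{xi}. The delicate issue is justifying the WKB asymptotics across the turning points (zeroes of $Q_0$) that lie on the cycle $\gamma$, which requires matching local Airy-type expansions and the saddle-free hypothesis on the differential $\lambda^{-2}Q_0\,dx^{\tensor 2}$; this is the main technical obstacle.

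Finally, (RH3) follows from a dual analysis as $\hbar\to\infty$ in $\bH_r$: the potential $Q(x,\hbar)$ degenerates to $Q_2(x)$, which has only a regular (apparent) singularity at $x=q$, so the Stokes data remain bounded and the Fock-Goncharov coordinates have at worst polynomial growth in $\hbar$. Assembling these four inputs establishes that $X_r$ solves Problem \ref{dtsect} for the BPS structure $(\Gamma,Z,\Omega)$ with constant term $\xi$. The isomonodromy invariance of the flows \eqref{fir1}--\eqref{sec1}, while not strictly needed for this pointwise statement, is what later guarantees that the resulting family of solutions assembles into a Joyce structure on $M$.
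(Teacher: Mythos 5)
Your decomposition into (RH1)--(RH3) and your treatment of the first two conditions coincide with the paper's own sketch (the detailed proof is deferred to \cite{A2}): (RH1) comes from the fact that crossing an active ray changes the WKB triangulation by a flip whose effect on Fock--Goncharov coordinates is exactly $C_\gamma$ of \eqref{missed}, and (RH2) comes from the exact WKB analysis of the subdominant solutions, with the subleading term $\int_\gamma Q_1/(2\sqrt{Q_0})\,dx$ producing the constant term \eqref{xi}. One small imprecision: a generic active ray crossing is a single flip even in case (b) of Section \ref{matrix6c}, since the three active rays in the upper half-plane are distinct there; the pentagon identity \eqref{pent} expresses the consistency between the two chambers rather than being needed to process an individual ray.

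Your argument for (RH3) is, however, not the paper's and does not work as stated. You argue that as $\hbar\to\infty$ the potential tends to $Q_2(x)$, which has only an apparent singularity at $x=q$, and conclude that the Stokes data stay bounded. But the Stokes data in question live at $x=\infty$, not at $x=q$, and the limit is a confluence there: $Q(x,\hbar)\,dx^{\tensor 2}$ has a pole of order $7$ at $x=\infty$ for $\hbar\in\bC^*$, whereas $Q_2(x)\,dx^{\tensor 2}$ has a pole of order $4$, so the five Stokes directions collapse to two. The five subdominant solutions therefore need not have distinct limits, and cross-ratios of colliding points tend to $0$, $1$ or $\infty$ --- precisely the kind of degeneration that could violate the polynomial bound. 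The paper's point (ii) instead exploits the quasi-homogeneity of the potential \eqref{pot}--\eqref{pot2} under rescaling $(x,a,b,q,p,r,\hbar)$ with suitable weights: since the monodromy is invariant under this rescaling, the limit $\hbar\to\infty$ with $\xi\in\bT_+$ held fixed is converted into a limit of the parameters at fixed $\hbar$, for which the order-$7$ pole and its five Stokes sectors persist, and one concludes that $X_r(\hbar)$ tends to a constant. You would need to replace your degeneration argument by this scaling argument (or otherwise control the confluence at $x=\infty$) to establish (RH3).
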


This proof of Theorem \ref{two} boils down to the following  three statements:
\begin{itemize}
\item[(i)] the WKB approximation shows that as $\hbar\to 0$ in the half-plane $\bH_r$, the Fock-Goncharov co-ordinates satisfy
\[X_{r,\gamma}(\hbar)\cdot \exp(Z(\gamma)/\hbar)\to \xi(\gamma);\]
\item[(ii)] the homogeneity of the potential \eqref{pot} under the rescaling of all variables with various weights shows  that as $\hbar\to \infty$ in the half-plane $\bH_r$, with  $\xi\in \bT_+$ held fixed
\[X_{r}(\hbar)\to {\rm constant};\]
\item[(iiii)]
given an active ray $\ell=\bR_{>0}\cdot \lambda$ for the BPS structure $(\Gamma,Z,\Omega)$, corresponding to a differential \eqref{hq} with a horizontal saddle trajectory with class $\gamma\in \Gamma$, the two systems of Fock-Goncharov co-ordinates defined by small clockwise and anti-clockwise perturbations of $\lambda\in \bC^*$ differ by post-composition by the birational transformation $C_\gamma$ of \eqref{missed}.
\end{itemize}

We unfortunately have no results on the uniqueness of the solution of Theorem \ref{two}.

\subsection{Joyce structure}

Consider the  bundle $\pi \colon \bT\to M$ whose fibres are the tori
\[\bT_{p,+}=\Hom_{\bZ}(\Gamma_p,\bC^*).\]
There are obvious  local co-ordinates on the fibres $\bT_{p,+}$  given by
\[\theta_i=\log \xi(\gamma_i)\in \bC, \qquad \xi\in \bT_{p,+},\]
and we therefore obtain local co-ordinates $(z_1,z_2,\theta_2,\theta_2)$ on the total space $\bT$.
Let $\pi\colon W\to M$ be the obvious projection map. The expression \eqref{xi} defines a holomorphic map $\Theta$ fitting into the diagram
 \begin{equation}\label{diagre}
\xymatrix@C=1.5em{
W \ar^{\Theta}[rr] \ar_{\pi}[dr] && \bT\ar^{\pi}[dl] \\
&M
} \end{equation}
This map is in fact  an open embedding, and so we can push forward the isomonodromy flows \eqref{fir1} and \eqref{sec1}   to obtain a non-linear connection on the bundle of tori $\pi \colon \bT\to M$. The following result is proved in \cite{A2}.

\begin{thm}
\label{one}
In the co-ordinates $(z_1,z_2,\theta_1,\theta_2)$ the push-forward of the isomonodromy flows  via the map $\Theta$ take the Hamiltonian form
\begin{equation}
\label{floww}\frac{\partial}{\partial z_i} + \frac{1}{\hbar} \cdot \frac{\partial}{\partial \theta_i} + \frac{\partial^2 J} {\partial \theta_i \partial \theta_1}\cdot\frac{\partial}{\partial \theta_2}-\frac{\partial^2 J}{\partial \theta_i\partial \theta_2}\cdot \frac{\partial}{\partial \theta_1},\end{equation}
where $J\colon \bT\to \bC$ is a meromorphic function with no poles on  the locus $\theta_1=\theta_2=0$. In fact\[\frac{1}{2\pi i} \cdot \Theta^*(J)=-\frac{1}{4\Delta p} \big( 2ap^2+3p(3b-2aq)r+(6aq^2-9bq+4a^2)r^2 - 2apr^3\big),\]
where we set $\Delta=4a^3+27b^2$. 
\end{thm}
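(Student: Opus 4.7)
The strategy is to transfer the problem from the parameter space $W$ of deformed cubic oscillators to the tangent bundle $\bT$ via the open embedding $\Theta$, and then directly identify the pushforward of the isomonodromy flows \eqref{fir1}--\eqref{sec1} with the Hamiltonian form \eqref{floww} using the chain rule.

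The first step is to differentiate the defining integrals. The central charges $z_i(a,b) = \int_{\gamma_i} \sqrt{Q_0}\,dx$ depend only on the base variables, so
\[
\frac{\partial z_i}{\partial a} = \int_{\gamma_i} \frac{x\,dx}{2y}, \qquad \frac{\partial z_i}{\partial b} = \int_{\gamma_i} \frac{dx}{2y},
\]
are the standard periods of the elliptic curve $y^2 = Q_0(x)$, and the resulting $2\times 2$ Jacobian is invertible precisely on the discriminant complement $\Delta \neq 0$. For the fibre coordinates $\theta_i = \int_{\gamma_i} \bigl(\tfrac{p}{x-q} + r\bigr)\tfrac{dx}{2y}$, straightforward differentiation yields $\partial \theta_i/\partial r = \partial z_i/\partial b$ together with expressions for $\partial \theta_i/\partial q$ and $\partial \theta_i/\partial p$ as integrals of third-kind differentials. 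These formulas, combined with the implicit relation $p^2 = q^3 + aq + b$ imposed on $W$, let us apply the chain rule to express the pushforward of each flow as a vector field in the coordinates $(z_1, z_2, \theta_1, \theta_2)$.

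The second step is to recognize the Hamiltonian structure. The $\hbar^{-1}$-parts of the flows \eqref{fir1}--\eqref{sec1} involve only $\partial_r$, $\partial_q$, $\partial_p$, which act trivially on $z_i$ and linearly on $\theta_i$ via the period identities above. A short calculation (using that $\partial \theta_i/\partial r = \partial z_i/\partial b$ and the analogous identity giving the $a$-derivative) shows that after taking suitable $\bC$-linear combinations the $\hbar^{-1}$-contribution reduces to $\hbar^{-1}\partial_{\theta_i}$, as required. The $\hbar^0$-parts then yield flows tangent to vector fields on $\bT$ of the form $\partial_{z_i} + V_i$, where $V_i$ is purely vertical. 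The non-linear piece $V_i$ must be Hamiltonian with respect to the vertical Poisson bracket $\{\theta_1,\theta_2\} = 1$ because the isomonodromy connection preserves the natural symplectic structure on moduli of opers (which is matched to the Poisson structure on $\bT$ by the period map $\Theta$); this is the conceptual input. Combining the two flows one gets a closed $1$-form $m_{\partial_{z_i}}(J) \, d\theta_i^\vee$ whose integrability, guaranteed by flatness of the extended isomonodromy connection, produces a primitive function $J$ defined up to terms affine in the fibre coordinates.

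The third and most computational step is to obtain the closed formula for $\Theta^*(J)$ stated in the theorem. The idea is to write the candidate expression
\[
\Theta^*(J) = \frac{-\pi i}{2\Delta p}\bigl(2ap^2 + 3p(3b-2aq)r + (6aq^2 - 9bq + 4a^2)r^2 - 2apr^3\bigr)
\]
and verify directly, using the period identities of Step~1 and the constraint $p^2 = q^3+aq+b$, that its Hamiltonian vector field against the bracket $\{\theta_1,\theta_2\}=1$ pushed back to $W$ equals the vertical part of \eqref{fir1}--\eqref{sec1}. This is essentially a polynomial identity in $(a,b,q,p,r)$ modulo the constraint. Finally, to check the absence of poles along $\theta_1 = \theta_2 = 0$ one observes that the locus in $W$ mapping to $\theta_1 = \theta_2 = 0$ under $\Theta$ is $\{r = 0,\ p = \infty\text{ (apparent singularity at infinity)}\}$, along which the rational expression $\Theta^*(J)$ is regular (the $\Delta$ in the denominator is nonvanishing on $M$ by assumption). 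The main obstacle is the third step: the algebraic verification of the explicit formula requires careful bookkeeping with the constrained variables $(q,p)$ and the inversion of the period Jacobian, and it is this matching of the Hamiltonian generated by $J$ with the non-trivial non-linear isomonodromy term of \eqref{sec1} that does the real work.
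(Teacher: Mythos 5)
The paper itself does not prove Theorem \ref{one} --- it is quoted from \cite{A2}, where it is established by direct computation with the isomonodromy flows --- so your proposal must be judged as a self-contained attempt at that computation. Its overall shape is right: differentiate the period integrals, use $\partial\theta_i/\partial r=\partial z_i/\partial b$ and the reduction of $\big(2p\,\partial_q+(3q^2+a)\partial_p\big)\theta_i$ to $\int_{\gamma_i}(x-q)\,dx/2y$ modulo exact differentials to handle the $\hbar^{-1}$ parts, then identify the $\hbar^0$ vertical parts with Hamiltonian vector fields. But the core of the theorem is the explicit formula for $\Theta^*(J)$, and your Step 3 defers it to ``essentially a polynomial identity in $(a,b,q,p,r)$ modulo the constraint''. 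It is not: to rewrite the vertical components in the form $\partial^2 J/\partial\theta_i\partial\theta_j$ you must invert the fibrewise Jacobian $\partial(\theta_1,\theta_2)/\partial(q,r)$, whose entries are elliptic integrals of the second and third kind. The only reason a rational function of $(a,b,q,p,r)$ emerges is that the Legendre/Riemann bilinear relations force the relevant $2\times 2$ period determinants to be constants and make all transcendental periods cancel; without invoking these identities the computation cannot close, and you never mention them. The same identities are what would be needed to substantiate your unproved assertion that $\Theta$ carries the fibrewise symplectic structure on $W$ to $d\theta_1\wedge d\theta_2$, which is the ``conceptual input'' you lean on to get Hamiltonianity of the $V_i$.

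The clause about regularity along $\theta_1=\theta_2=0$ is also mishandled. You identify the preimage of this locus with $\{r=0,\ p=\infty\}$ and claim the displayed rational expression is regular there because $\Delta\neq 0$; but setting $r=0$ in the formula gives $-ap/2\Delta$ up to a constant, which diverges as $p\to\infty$. The identity section of $\bT$ is reached only through a degeneration in which the apparent singularity runs off to the pole at $x=\infty$ while $r$ diverges in a compensating way, and establishing that $J$ extends without poles there requires analysing that limit (or arguing directly on $\bT$), not the nonvanishing of the discriminant.
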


We can now describe the Joyce structure on $M$ corresponding to the miniversal variation of BPS structures of Section \ref{matrix6c}. As explained in Section \ref{recon}, most of the data carries over easily from the variation of BPS structures. In terms of the local co-ordinates $(z_1,z_2)$ on $M$ considered above, the connection $\nabla$ is the one in which the $z_i$ are flat, and the bundle of lattices $\Gamma_M\subset \cT_{M}^*$ is spanned by $dz_1$ and $dz_2$. Note that by the formula \eqref{cycling}, rescaling $(a,b)$ with weights $(4,6)$ has the effect of rescaling the co-ordinates  $(z_1,z_2)$ with weight $5$, so the Euler vector field is
\[E=z_1\cdot \frac{\partial}{\partial z_1}+z_2\cdot \frac{\partial}{\partial z_2} =\frac{4a}{5}  \cdot \frac{\partial}{\partial a} + \frac{6b}{5} \cdot \frac{\partial}{\partial b}.\]

The only non-trivial step is the construction of the Joyce function $J\colon \cT_M\to \bC$. Recall that the solutions to the RH problem are   flat sections of the deformed Joyce connection. Since these solutions are functions of the monodromy of the equation \eqref{de},  the deformed Joyce connection must coincide with the isomonodromy connection. Thus the Joyce function is precisely the function $J$ of Theorem \ref{one}, and the differential equation \eqref{master} coincides with \eqref{floww}. 
The isomonodromy flows define Ehresmann connections on the two bundles appearing in the commutative diagram \eqref{diagre} which  are flat by construction, since they are the pullback via the monodromy map of the trivial connection on the bundle $\pi\colon V\times M\to M$. It follows from this that the function $J$ of Theorem \ref{one} satisfies the differential equation \eqref{fl}, at least up to the addition of terms independent of $\theta_i$, which are shown to vanish by the more careful analysis in \cite{A2}.

\subsection{Linearization}

Consider the affine space $\bC^2$ with linear co-ordinates $(a,b)$. Viewing this as the unfolding space of the A$_2$ singularity $x^3=0$ gives rise to 
a Frobenius structure  \cite[Example 1.4]{D2} with
\[e=\frac{\partial}{\partial b}, \qquad \frac{\partial}{\partial a}*\frac{\partial}{\partial a}=-\frac{a}{3}\cdot \frac{\partial}{\partial b},\]
\[g=\frac{1}{3} \cdot ( da\tensor db + db\tensor da),\qquad E= \frac{2a}{3}\cdot \frac{\partial}{\partial a} +  b \cdot \frac{\partial}{\partial b}.\]
We refer to this as the A$_2$ Frobenius structure. The conformal dimension is $d=\tfrac{1}{3}$. The discriminant locus is the submanifold cut out by the equation $\Delta= 4a^3+27b^2=0$. Thus by restriction we obtain a discriminant-free Frobenius structure on the open submanifold $M\subset \bC^2$.  
The diamond product $X\diamond Y=E^{-1} * X * Y$ is given by
\[\Delta\cdot \frac{\partial}{\partial a}\diamond \frac{\partial}{\partial a}=6a^2\frac{\partial}{\partial a}-9ab\frac{\partial}{\partial b}, \qquad \Delta\cdot \frac{\partial}{\partial b}\diamond \frac{\partial}{\partial b}=-18a\frac{\partial}{\partial a}+27b\frac{\partial}{\partial b},\]
\[\Delta\cdot \frac{\partial}{\partial a}\diamond \frac{\partial}{\partial b}=27b\frac{\partial}{\partial a}+6a^2\frac{\partial}{\partial b}.\]
The following result is quite striking, although until  further examples have been calculated it is perhaps too early to say whether it is just a coincidence.

\begin{thm}
The  Joyce structure on $M$ considered above is compatible, in the sense of Definition \ref{blog}, with the A$_2$ Frobenius structure  restricted to the open subset $M\subset \bC^2$.
\end{thm}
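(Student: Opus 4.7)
The plan is to verify conditions (i) and (ii) of Definition \ref{blog} by computing the Joyce form and diamond product directly from the formula for $J$ in Theorem \ref{one}, then comparing them with the A$_2$ Frobenius data. The expected scaling constants are fixed in advance: the A$_2$ Frobenius structure has conformal dimension $d = 1/3$, so Proposition \ref{coco}(a) forces $\mu = (2-d)/2 = 5/6$; and comparing the Joyce form $g = \frac{2\pi i}{5}(da\tensor db + db\tensor da)$ (asserted in the introduction) with $g_{\mathrm{Frob}} = \frac{1}{3}(da\tensor db + db\tensor da)$ predicts $\lambda = 6\pi i/5$. Thus the goal reduces to two explicit computations, both leading to polynomial identities in $a,b$.

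I would carry out the computation by working on $W$ through the open embedding $\Theta$ from Theorem \ref{one}, whose image pulls the Joyce function back to the explicit rational expression in $(a,b,q,p,r)$ stated there. Vertical tangent vectors $\partial/\partial\theta_i$ on $\cT_M$ correspond under $\Theta$ to combinations of $\partial/\partial r$ and the tangent vector along the affine elliptic curve $p^2=q^3+aq+b$; the change-of-basis between the two systems is governed by the period matrix of the cohomology basis $dx/(2y)$, $p\,dx/(2y(x-q))$ of the once-punctured elliptic curve against the cycles $(\gamma_1,\gamma_2)$. The zero-section $\theta=0$ of each fiber of $\pi\colon W\to M$ is cut out by the two equations that both period integrals in \eqref{xi} vanish.

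With these preparations, the third partials $\partial^3 J/\partial\theta_i\partial\theta_j\partial\theta_k\big|_{\theta=0}$ reduce to an algebraic calculation: take three vertical derivatives of the low-degree-in-$r$ polynomial $\Theta^*(J)$, restrict to the zero-locus, and then convert from $(z_1,z_2)$-coordinates on $M$ back to $(a,b)$-coordinates using the Jacobian of the period map $z_i=\int_{\gamma_i}\sqrt{Q_0}\,dx$. From these third partials one reads off the Joyce form by contraction with $(z_1,z_2)$ and the diamond product by raising an index with $g^{-1}$. Expressed in $(a,b)$, both objects become rational functions with denominator a power of $\Delta=4a^3+27b^2$, and matching them against $\lambda\cdot g_{\mathrm{Frob}}$ and $\mu\cdot \diamond_{\mathrm{Frob}}$ reduces to finitely many polynomial identities in $a,b,\Delta$ that can be checked by hand. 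As a sanity check on the whole procedure, Proposition \ref{coco}(b)--(c) predicts that both Euler vector fields and gradient-of-Euler operators align up to the same factor $\mu^{-1}$, which is easily verified at the level of the formulas for $E$ and $V$ given in Sections \ref{linear} and \ref{sten}.

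The main obstacle is the bookkeeping between the coordinates $(\theta_1,\theta_2)$ and the algebraic fiber-coordinates on $W$, since inverting the period matrix introduces transcendental factors. The crucial point is that these factors enter uniformly through the same Jacobians on both sides of the claimed identity, and the Picard--Fuchs equations satisfied by the periods of $\sqrt{Q_0}\,dx$ and of $dx/y$ — whose coefficients for $Q_0 = x^3+ax+b$ are rational in $a,b,\Delta$ — allow one to eliminate them and reduce to a purely algebraic identity. The detailed execution of this program, including the verification of the predicted constants $\lambda = 6\pi i/5$ and $\mu = 5/6$, is carried out in \cite{A2}.
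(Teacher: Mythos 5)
Your verification of condition (i) coincides with the paper's: both rest on the computation in \cite{A2} that the Joyce form equals $\frac{2\pi i}{5}(da\tensor db+db\tensor da)$, and your predicted constants $\lambda=6\pi i/5$, $\mu=(2-d)/2=5/6$ are consistent with Proposition \ref{coco}. For condition (ii), however, you take a genuinely different and much heavier route than the paper, and this is where your argument is not actually closed. You propose to compute the full uncontracted tensor $\partial^3 J/\partial\theta_i\partial\theta_j\partial\theta_k\big|_{\theta=0}$ by differentiating $\Theta^*(J)$ on $W$, inverting the period matrix, and eliminating transcendental factors via Picard--Fuchs relations. This is plausible in principle, but you defer its execution to \cite{A2}, and that reference (as used in the paper) supplies only the \emph{contracted} tensor, i.e.\ the Joyce form; it does not verify the diamond product. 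So as written, condition (ii) is asserted rather than proved.

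The paper avoids this computation entirely. It invokes Dubrovin's result \cite[Proposition 5.1]{D3} that the odd periods of the A$_2$ Frobenius structure are exactly the periods $\int_\gamma\sqrt{Q_0(x)}\,dx$, which are the flat co-ordinates $z_i$ of the Joyce structure's connection $\nabla$. It then runs the argument of Proposition \ref{coco}(e) in reverse: once the Joyce form matches the Frobenius metric up to scale (so $\nabla^J$ is the Levi-Civita connection and the operators $V$ match up to scale), the identity $\nabla^J_X(Y)=\nabla_X(Y)+V(X\diamond Y)$ together with the coincidence of $\nabla$ with the second structure connection determines $V(X\diamond Y)$, and hence the diamond product, forcing it to agree with the twisted Frobenius product up to scale. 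This buys condition (ii) with no further computation beyond what \cite{A2} already provides. If you want to keep your direct approach, you must actually carry out the third-derivative computation and the period-matrix bookkeeping yourself; otherwise you should substitute the odd-periods argument, which is the step your proposal is missing.
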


\begin{proof}
It was proved in \cite{A2} that the Joyce form of the above Joyce structure is
\[g=\frac{2\pi i}{5} \cdot (da\tensor db+ db\tensor da),\]
which agrees with the metric of the Frobenius structure up to scale. The Euler vector fields of the two structures also agree up to scale.
Dubrovin showed  \cite[Proposition 5.1]{D3} that the odd periods of the A$_2$ Frobenius structure are given by the periods \eqref{stuffy}. Reversing the argument of Proposition \ref{coco}(e) it follows  that the diamond product of the Joyce structure agrees with the twisted product of the Frobenius structure up to scale. Thus the two structures are compatible.
\end{proof}

\bibliographystyle{amsplain}

\end{document}